\DeclareSymbolFont{cyrletters}{OT2}{wncyr}{m}{n}
\DeclareMathSymbol{\Sha}{\mathalpha}{cyrletters}{"58}
\let\Re\undefined
\let\Im\undefined
\DeclareMathOperator{\Re}{Re}
\DeclareMathOperator{\Im}{Im}
\DeclareMathOperator{\ord}{ord}
\DeclareMathOperator{\SL}{SL}
\DeclareMathOperator{\rad}{rad}
\newcommand{\bZ}{\mathbb{Z}}
\newcommand{\bR}{\mathbb{R}}
\newcommand{\bC}{\mathbb{C}}
\newcommand{\cS}{\mathcal{S}}
\newcommand{\cP}{\mathcal{P}}
\newcommand{\cJ}{\mathcal{J}}
\newcommand{\bH}{\mathbb{H}}
\newcommand{\tmod}[1]{\ \left(\text{mod }#1\right)}
\newcommand{\legendre}[2]{\left(\frac{#1}{#2}\right)}
\newcommand{\tlegendre}[2]{\textstyle{\left(\frac{#1}{#2}\right)}}
\def\Re{\operatorname{Re}}
	\newcommand{\Mod}[1]{\ (\mathrm{mod}\ #1)}
        \newcommand{\sym}{\operatorname{sym}}
	\newcommand{\Geo}{\operatorname{Geo}}
	\newcommand{\du}{\operatorname{Dual}}
	\newcommand{\Reg}{\operatorname{Reg}}
	\newcommand{\si}{\operatorname{Sing}}
	\newcommand{\sm}{\operatorname{Small}}
	\newcommand{\RNum}[1]{\uppercase\expandafter{\romannumeral #1\relax}}
\begin{document}
\theoremstyle{plain}
	\newtheorem{thm}{Theorem}[section]
	\newtheorem{cor}[thm]{Corollary}
	\newtheorem{thmy}{Theorem}
        \newtheorem{cory}{Corollary}
	\renewcommand{\thethmy}{\Alph{thmy}}
	\newenvironment{thmx}{\stepcounter{thm}\begin{thmy}}{\end{thmy}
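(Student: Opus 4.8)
The plan is to derive the statement from an asymptotic evaluation of a (mollified) moment of central $L$-values over the family in question, together with the standard arithmetic dictionary between analytic rank, algebraic rank, and $\Sha$. Since that dictionary dictates which moment is needed, I would set it up first: by the Gross--Zagier--Kolyvagin theorem and the $p$-converse results available in this setting, a member $f$ of the family with $L(f,\tfrac12)\neq 0$ has (analytic and algebraic) rank $0$ and finite $\Sha$, while one with $L(f,\tfrac12)=0$ but $L'(f,\tfrac12)\neq 0$ has rank exactly $1$. Hence it suffices to produce, with respect to the natural harmonic weight $\omega_f$, a positive proportion of the family with $L(f,\tfrac12)\neq 0$, and --- for the companion statement about rank-one members --- a positive proportion, among those of odd functional-equation sign, with $L'(f,\tfrac12)\neq 0$.

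For the non-vanishing I would run the mollification method. Open $L(f,\tfrac12)$ by an approximate functional equation as a smoothly weighted Dirichlet polynomial in the Hecke eigenvalues $\lambda_f(n)$ of length $\asymp\sqrt{\Cond(f)}$, multiply by a mollifier $M(f)=\sum_{\ell\le D}x_\ell\lambda_f(\ell)$ with $D=\Cond(f)^{\vartheta}$ for a fixed $\vartheta>0$, and average over the family via the Petersson/Kuznetsov trace formula. The diagonal term in the first moment and in the mollified second moment is evaluated by Mellin inversion against the relevant Rankin--Selberg and shifted-divisor Dirichlet series, producing the main terms; optimizing the resulting quadratic form over the coefficients $x_\ell$ (a standard sieve/Euler-product weight) then gives, by Cauchy--Schwarz, $\sum_{f:\,L(f,1/2)\neq 0}\omega_f \ \gg\ \big(\textstyle\sum_f\omega_f L(f,\tfrac12)M(f)\big)^2 \big/ \sum_f\omega_f\big(L(f,\tfrac12)M(f)\big)^2$, a positive proportion provided the off-diagonal contribution is lower order. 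The derivative case is identical after differentiating the functional equation, the odd sign isolating $L'(f,\tfrac12)$.

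The crux --- and the step I expect to be hardest --- is the off-diagonal term produced by the trace formula: a sum over moduli $c$ of Kloosterman sums $\Kl(m,n;c)$ weighted by a Bessel transform of the test function, which must be shown to be $o$ of the main term uniformly while $D$ stays a fixed positive power of $\Cond(f)$, since the admissible $\vartheta$ is exactly what controls the numerical proportion in the conclusion. I anticipate needing (i) the Weil bound for $\Kl(m,n;c)$, perhaps reinforced by an extra average or by Poisson summation in $c$ to go beyond it; (ii) integration-by-parts and stationary-phase control of the Bessel kernel to truncate the $c$-sum and the length of the Dirichlet polynomial; and (iii) careful bookkeeping of the ramified and archimedean local factors (orbital-integral and regulator-type contributions) so that they affect only lower-order terms, together with uniformity of all cutoff functions and their integral transforms in $\Cond(f)$. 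Once the off-diagonal is controlled and the mollified second moment is bounded by a constant times the square of the first moment, Cauchy--Schwarz delivers the non-vanishing proportions, and feeding them through the arithmetic dictionary of the first paragraph yields the asserted statement on ranks and $\Sha$.
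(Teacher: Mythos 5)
Your proposal does not address any statement actually proved in this paper, and the approach you describe would not yield the paper's results even if you targeted the closest one. The paper's three theorems are: an exact second-moment formula for $\sum_f \Lambda_f(n)|L(1/2,f)|^2/\|f\|^2$ over half-integral weight eigenforms derived from a relative trace formula (Theorem~\ref{thm. second moment closed formula}); a weight-aspect subconvexity bound $L(1/2,f)\ll_\varepsilon(\kappa^2)^{1/4-1/40+\varepsilon}$ obtained by amplification (Theorem~\ref{thm. subconvexity theorem}); and a quantitative simultaneous non-vanishing count $\#\{f:L(1/2,f)L(1/2,F\times\chi_D)\neq 0\}\gg\kappa/(\log\kappa)^{3+\varepsilon}$ obtained from H\"older's inequality (Theorem~\ref{thm. nonvanishing theorem}). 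There is no statement anywhere in the paper about analytic or algebraic rank, about $\Sha$, or about ``positive proportion'' non-vanishing; the ``arithmetic dictionary'' via Gross--Zagier--Kolyvagin and $p$-converse theorems that you place at the center of your argument is simply not present, and indeed cannot be: the $L$-functions here are attached to half-integral weight cusp forms, not to elliptic curves or to motivic objects with a Mordell--Weil group, so $\Sha$ and rank do not even make sense for them.

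Even granting that you may have been aiming at the non-vanishing result, your method is not the one the paper uses and would in fact break down here. You propose a mollified first and second moment evaluated via the Petersson/Kuznetsov trace formula, with a mollifier $M(f)=\sum_\ell x_\ell\lambda_f(\ell)$ and off-diagonal control through Weil bounds, Bessel transforms, and stationary phase. But the whole obstacle the paper confronts is that half-integral weight $L$-functions have \emph{no Euler product}: the Dirichlet coefficients $a_f(n)$ are not multiplicative, the approximate functional equation is not a polynomial in Hecke eigenvalues, and the Hecke operators $T_n$ only act for $n$ an odd square. Consequently the standard mollification machinery --- opening $L(1/2,f)M(f)^2$ into a single Dirichlet polynomial in eigenvalues and evaluating it by a Petersson-type formula --- does not set up. The paper sidesteps this by (i) deriving the second moment directly from a Zagier-type reproducing kernel and a Bruhat decomposition of $G_4(n)$ into singular and regular orbital integrals, rather than from a Petersson/Kuznetsov expansion; (ii) obtaining a lower bound for the \emph{first} moment $\sum_f a_f(|D|)L(1/2,f)/L(1,\mathrm{sym}^2 F)\gg\kappa$ from the Kohnen--Raji kernel argument; and (iii) combining these by H\"older's inequality, using the Kohnen--Zagier formula $a_f(|D|)^2 = \pi^2 L(1/2,F\times\chi_D)$ to reinterpret the fourth power of coefficients as a second moment of twisted integral-weight $L$-values. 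That last step is also what produces the \emph{simultaneous} non-vanishing with $L(1/2,F\times\chi_D)$; there is nothing corresponding to it in your proposal. You should discard the rank/$\Sha$ framing entirely and, if pursuing the non-vanishing theorem, replace mollification with the H\"older--plus--moment-bound scheme; if pursuing subconvexity, the paper instead uses an amplifier built from the Shimura correspondent's eigenvalues $\lambda_F(\ell)$, not the half-integral weight coefficients.
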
}
        \newenvironment{corx}{\stepcounter{cor}\begin{cory}}{\end{cory}}

	\renewcommand{\thecory}{\Alph{cory}}
	\newtheorem{hy}[thm]{Hypothesis}
	\newtheorem*{thma}{Theorem A}
	\newtheorem*{corb}{Corollary B}
	\newtheorem*{thmc}{Theorem C}
        \newtheorem*{thmd}{Theorem D}
	\newtheorem{lemma}[thm]{Lemma}  
	\newtheorem{prop}[thm]{Proposition}
	\newtheorem{conj}[thm]{Conjecture}
	\newtheorem{fact}[thm]{Fact}
	\newtheorem{claim}[thm]{Claim}
	\newtheorem{question}[thm]{Question}
	
	\theoremstyle{definition}
	\newtheorem{defn}[thm]{Definition}
	\newtheorem{example}[thm]{Example}
	\theoremstyle{remark}
	
	\newtheorem{remark}[thm]{Remark}	
	\numberwithin{equation}{section}

\title[]{Second moment of central values of half-integral weight modular forms and subconvexity}
\author{Steven Creech, Henry Twiss, Zhining Wei, Peter Zenz}

\address{Kassar House, 151 Thayer St, Providence, RI 02912 USA}
\email{steven$\_$creech@brown.edu}

\address{Kassar House, 151 Thayer St, Providence, RI 02912 USA}
\email{henry$\_$twiss@brown.edu}

\address{Kassar House, 151 Thayer St, Providence, RI 02912 USA}
\email{zhining$\_$wei@brown.edu}

\address{Kassar House, 151 Thayer St, Providence, RI 02912 USA}
\email{peter$\_$zenz@brown.edu}

\begin{abstract}
We let $f$ be a half-integral weight modular form of weight $\kappa>4$ on $\Gamma_0(4)$ that is an eigenfunction of all Hecke operators $T_n$, so that $T_nf = \Lambda_f(n)n^{\frac{\kappa-1}{2}}f$. Let $\|f\|$ denote the Petersson norm of $f$. We study a weighted second moment of the central value of the $L$-function associated to $f$ over an orthogonal basis $H_\kappa(4)$ of $\cS_{\kappa}(\Gamma_0(4))$. This corresponds to studying the following sum:
\[
\sum_{f\in H_\kappa(4)}\frac{\Lambda_f(n)\vert L(1/2,f)\vert^2}{\|f\|^2}.
\]
Using the relative trace formula, we obtain an asymptotic formula for the second moment. We then use the method of amplification to get the subconvexity bound 
\[
 L(1/2,f)\ll_{\varepsilon} (\kappa^2)^{\frac{1}{4}-\frac{1}{40}+\varepsilon}.
\]
This is the first subconvexity result for half-integral weight modular forms in the weight aspect. We also apply our second moment result to get a quantitative simultaneous non-vanishing result for central values of $L$-functions.
\end{abstract}

\date{\today}
\maketitle

\tableofcontents

\section{Introduction}

The study of central values of $L$-functions is a classical theme in number theory. For many problems in number theory, the size of the central values is of particular importance and the general theme of bounding these $L$-values is commonly referred to as the \textit{subconvexity} problem. More precisely, given an $L$-function $L(s, \pi)$ satisfying analytic continuation and a functional equation, one expects:
$$L(1/2, \pi) \ll_{\varepsilon} \mathcal{C}(\pi)^{1/4-\delta+\varepsilon},$$
for some $\delta > 0.$ Here $\mathcal{C}(\pi)$ is called the analytic conductor of the $L$-function (see \cite[Chapter 5]{AnalyticNumberTheory} for a thorough treatment). 
The trivial bound corresponding to $\delta=0$ is called the convexity bound and can be achieved by applying the Phragm\'en-Lindelöf principle. 
A \textit{subconvexity} bound is an improvement of the trivial bound corresponding to some $\delta>0$. The best possible bound corresponds to $\delta=\frac{1}{4}$; this is known as the Lindel\"of Hypothesis and follows in the cases where $L(s,\pi)$ satisfies the Riemann Hypothesis. Subconvexity bounds have far-reaching applications to deep arithmetic problems, including the Quantum Unique Ergodicity conjecture and the equidistribution of Heegener points (see e.g., \cite{RudnickSarnak1994}, \cite{Duke1988} and \cite[Lecture 5]{Michel2007}).
 
This paper investigates the subconvexity problem for $L$-functions associated to modular forms of half-integral weight. These $L$-functions do not admit Euler products and are known to not satisfy the Riemann Hypothesis (see \cite{yoshida1995calculations}). 

There are examples of Dirichlet series without an Euler product that fail to be subconvex at the central point (see \cite{ConreyGhosh2006}). This would suggest that the Euler product may be essential for subconvexity. However, for automorphic $L$-functions attached to modular forms of integral weight, the Euler product arises from the fact that the underlying form is a ``simultaneous eigenfunction'' for the Hecke operators. Jeffrey Hoffstein informally conjectured at Oberwolfach in 2011 that such a ``simultaneous eigenfunction'' property is crucial for implying a Lindel\"of Hypothesis. Although half-integral weight forms lack an Euler product, they still have Hecke operators; thus, one might expect that they satisfy the Lindel\"of Hypothesis.  In \cite{Kiral2015}, Kiral studied the subconvexity problems for $L$-functions associated to half-integral weight eigenforms twisted by a character in the conductor aspect, and a corresponding second moment was later derived in \cite{DunnZaharescu2025}. Subconvexity for other $L$-functions without Euler products has also been successfully investigated, notably by Blomer in \cite{Blomer2011} and \cite{BlomerEpstein}. 
In this paper, we study the subconvexity problem for $L$-functions associated with half-integral weight modular forms in the weight aspect. In the elliptic modular form case, the subconvexity problem in the weight aspect has been extensively studied  (see, e.g., \cite{Peng2001} and \cite{Young2017}). In contrast, no subconvexity bound in the weight aspect is known for $L$-functions associated with half-integral weight modular forms as the classical method requires the Euler product. 

The goal of this paper is to establish the first such result. Instead of the classical method, we utilize the relative trace formula method to derive a second moment formula for central values of half-integral weight modular forms. This can be regarded as an average Lindel\"of result in the weight aspect. As applications, we establish the desired subconvexity bound as well as a simultaneous non-vanishing result for the central values in the weight aspect. 

\subsection{Main Results} Let $\kappa$ be a half-integer. Denote by $\cS_{\kappa}(\Gamma_0(4))$ the space of holomorphic cusp forms of weight $\kappa.$ By the Hecke theory \cite{Shimura1977}, we can find an orthogonal basis, denoted by $H_{\kappa}(4)$, whose elements are eigenfunctions for Hecke operators. Let $n$ be an odd square integer and assume that $T_nf=\Lambda_f(n)n^{\frac{\kappa-1}{2}}f.$ Denote by $\|f\|$ the Petersson norm of $f.$ 
Let $\mathbf{s}=(s_1,s_2)\in\bC^2$ and set $\mathbf{0}=(0,0).$ The first main result is the eigenvalue-weighted second moment formula for central values of half-integral weight modular forms, which is a half-integral weight version of Kuznetsov's identity. 

\begin{thmx}\label{thm. second moment closed formula}
	 Let $\kappa>4$ be a half-integer, and $n$ an odd square integer. Then
	\[n^{\frac{\kappa-1}{2}}\frac{\Gamma(\kappa/2)^2}{(2\pi)^{\kappa}}\sum_{f\in H_{\kappa}(4)}\frac{\Lambda_f(n)|L(1/2,f)|^2}{\|f\|^2}=J_{\si}(\textbf{0},n)+J_{\Reg}(\textbf{0},n),\]
	where
	\begin{flalign*}
		J_{\si}(\mathbf{s},n)&=\frac{2n^{\frac{\kappa}{2}-1}}{i^{\kappa}C_{\kappa}2^{s_1+s_2}\Gamma(\kappa)}\left(M(s_1,s_2;n)+M(-s_1,-s_2;n)\right)\\
		&\hspace{20mm}-\frac{2n^{\frac{\kappa}{2}-1}}{i^{\kappa}C_{\kappa}2^{s_1+s_2}\Gamma(\kappa)}\frac{1}{(2\pi i)^2}\oint\displaylimits_{|z_1|=1}\oint\displaylimits_{|z_2|=1}\frac{(z_1^2-z_2^2)^2(z_1z_2+s_1s_2)K(z_1,z_2;n)}{\prod_{i=1}^2\prod_{j=1}^2(z_i-s_j)(z_i+s_j)}\,dz_1\,dz_2,	
	\end{flalign*}
	\[K(s_1,s_2;n)=\frac{\Gamma(s_1+\kappa/2)\Gamma(s_2+\kappa/2)\zeta(1+s_1+s_2)}{\pi^{s_1+s_2}}\sum_{\substack{g^4ad=n\\\gcd(a,d)=1}}\frac{1}{a^{s_1}d^{s_2}g^{2s_1+2s_2}}\prod_{p|g}(1-p^{s_1+s_2}),\]
and
\[M(s_1,s_2;n)=\frac{\Gamma(s_1+\kappa/2)\Gamma(s_2+\kappa/2)}{\pi^{s_1+s_2}}\sum_{\substack{g^2ad=n\\\gcd(a,d)=1\\g\neq\square}}L(1+s_1+s_2,\chi_{g_*})g_*^{\frac{1}{2}+s_1+s_2}\frac{\varepsilon_g^{2\kappa+1}\legendre{-1}{g}}{a^{s_1}d^{s_2}g^{s_1+s_2}}\prod_{p|g_0}\left(1-\chi_{g_*}(p)p^{s_1+s_2}\right),\]
where $g_*$ is the squarefree part of $g,$ $g_0=\frac{\rad(g)}{g_*}$ are divisors of $g$, and $C_{\kappa}$ is defined in \eqref{eq. kappa constant}.

$J_{\Reg}(\textbf{0},n)$ is given in Proposition \ref{prop, the error term in RTF} and satisfies
\[J_{\Reg}(\textbf{0},n)\ll_{\varepsilon} \frac{n^{\frac{\kappa}{2}+\varepsilon}\Gamma\left(\frac{\kappa}{2}\right)^2}{|C_{\kappa}|\kappa^{\frac{1}{2}}\Gamma(\kappa)}.\]  
\end{thmx}

\begin{remark}\label{rem. main term size}
In Corollary \ref{cor. estimation of the singular orbital integral}, we will show: for any square integer $n$ and any $\varepsilon>0,$
	\[J_{\si}(\textbf{0},n)\ll_{\varepsilon} n^{\frac{\kappa}{2}-\frac{3}{4}+\varepsilon}\frac{\kappa^{\varepsilon}\Gamma\left(\frac{\kappa}{2}\right)^2}{|C_{\kappa}|\Gamma(\kappa)}.\]	
\end{remark}

\begin{remark}
    When $n=1,$ we obtain a stronger asymptotic formula for the second moment, which is Proposition \ref{prop. second moment n=1}.
\end{remark}

Let $f\in H_{\kappa}^+(4)$ be a Hecke eigenform in the Kohnen plus space. Then it has a Fourier expansion
\begin{equation}\label{eq. fourier expansion of half-integral form}
    f(z) = \sum_{n \ge 1}a_f(n)n^{\frac{\kappa-1}{2}}e(nz).
\end{equation}
We can normalize $f$ such that $a_f(n)$ are always real. Let $\Lambda_{f}(n)$ be the $n$-th Hecke eigenvalue of $f$ so that $T_{n}f = \Lambda_{f}(n)n^{\frac{\kappa-1}{2}}f$. We normalize $f$ such that, for any fundamental discriminant $D$ satisfying $(-1)^{\kappa-\frac{1}{2}}D>0,$ 
\begin{equation}\label{eq. normalization of forms}
a_f(|D|)^2=\pi^2L\left(\frac{1}{2},F\times\chi_D\right),
\end{equation}
where $F:=F_f\in H_{2\kappa-1}(1)$ is the Shimura correspondent of $f$, which is an integral-weight $2\kappa-1$ eigenform on $\SL_2(\mathbb{Z})$. The normalization is well-defined by the Kohnen-Zagier formula \cite[Theorem 1]{KohnenZagier1981}.

Let $f\in H_{\kappa}^+(4)$ be a Hecke eigenform in the Kohnen plus space. The analytic conductor of $f$ is of size $ \kappa^2.$ By the amplification method, we prove the following subconvexity result:
\begin{thmx}\label{thm. subconvexity theorem}
	Let $f\in H_{\kappa}^+(4)$ satisfying the normalization \eqref{eq. normalization of forms}. Then for any $\varepsilon>0,$ 
	\[L\left(\frac{1}{2},f\right)\ll_{\varepsilon} (\kappa^{2})^{\frac{1}{4}-\frac{1}{40}+\varepsilon}.\]
\end{thmx}

This result establishes the first subconvexity result for half-integral weight modular forms in the weight aspect. From the proof of Theorem \ref{thm. subconvexity theorem}, this subconvexity bound is true for all eigenforms $f\in H_{\kappa}(4).$

\bigskip

Applying H\"older's inequality, we establish the following simultaneous nonvanishing result for the central values:
\begin{thmx}\label{thm. nonvanishing theorem}
	 Let $D$ be a fixed fundamental discriminant satisfying $(-1)^{\kappa-\frac{1}{2}}D>0.$ Then for any $\varepsilon>0,$
\[\#\{f\in H_{\kappa}^+(4):L(1/2,f)L(1/2,F\times\chi_D)\neq0\}\gg\frac{\kappa}{(\log\kappa)^{3+\varepsilon}}.\]
\end{thmx}
Prior to Theorem \ref{thm. nonvanishing theorem}, only the existence of half integral Hecke eigenforms with nonvanishing central values was known (see \cite{RamakrishnanShankhadhar2014,KohnenRaji2017}). Our result provides the first quantitative lower bound. Additionally, this implies that, for $D$ a fundamental discriminant satisfying $(-1)^{\kappa-\frac{1}{2}}D>0,$ the number of integral weight eigenforms $F\in H_{2\kappa-1}(1)$ for which $L(1/2,F\times\chi_D)\neq0$ is at least of  order $\frac{\kappa}{(\log  \kappa)^{3+\varepsilon}}$.

\subsection{Strategy of the Proof}

In the integral weight set up, the eigenvalue-weighted second moment, which is known as Kuznetsov's identity (see, e.g., \cite[Theorem 4.2]{BalkanovaFrolenkov21}), has been extensively studied. The main idea is to apply the Hecke relations to produce divisor functions, and subsequently apply the Petersson trace formula and the divisor-type Voronoi summation formula. For the details in this classical set up, one can refer to \cite[Section 4]{BalkanovaFrolenkov21}.

This method is difficult to adapt to the half-integral weight setting because the associated $L$-functions lack Euler products and the corresponding multiplicative relations. Consequently, divisor functions do not emerge naturally, and the classical method fails. In this paper, we instead employ the "relative trace formula" (RTF) to establish a half-integral weight Kuznetsov's identity, which is Theorem \ref{thm. second moment closed formula}.

The RTF method has been extensively studied in analytic number theory (see, e.g., \cite{RR05,FeigonWhitehouse2009,Yan23a}). It provides an effective tool for bypassing certain intermediate steps in the classical derivation of Kuznetsov's identity. The main idea of the RTF method is to integrate the pre-trace formula (see, e.g., \cite[Theorem 1]{Zagier1977}) along “proper subgroups.” In our setting, the proper subgroup corresponds to the diagonal group in $\operatorname{PGL}_2$, or, classically, the positive imaginary axis. Integrating the spectral side of the pre-trace formula--often referred to as the \textit{spectral side} of the RTF--yields the desired second moment. Consequently, it suffices to analyze the integral of the geometric side, commonly called the \textit{geometric side} of the RTF. By applying the Bruhat decomposition, one finds that the geometric side decomposes into a sum of integrals, known as orbital integrals. A key difference between the representation-theoretic RTF and our approach arises in the half-integral weight setting: here, the orbital integrals cannot generally be expressed as products of local orbital integrals. Nevertheless, one can employ a classical version of the RTF, previously studied in the integral weight case in \cite{Wei2025}.

We give a brief overview of the proof focusing on the essential details. Notice that the $L$-functions of half-integral weight modular forms can be defined by the Mellin transformation \eqref{eq. integral representation for L functions}. This motivates us to consider the following double integral:
\[J(\mathbf{s},n)=\int_0^{\infty}\int_0^{\infty}\sum_{f\in H_{\kappa}(4)}\frac{(T_nf)(iy_1)\overline{f(iy_2)}}{\|f\|^2}y_1^{s_1+\kappa/2}y_2^{s_2+\kappa/2}\frac{\,dy_1}{y_1}\frac{\,dy_2}{y_2},\]
where $n$ is an odd square integer, and $T_n$ is the half-integral weight Hecke operator. Then by \eqref{eq. integral representation for L functions}, $J(\textbf{0},n)$ is the left hand side of Theorem \ref{thm. second moment closed formula}. Similar to \cite{Zagier1977}, we can deduce the reproducing kernel for the space $\cS_{\kappa}(\Gamma_0(4))$, which implies:
\[J(\mathbf{s},n)=C_{\kappa}^{-1}n^{\kappa-1}\int_0^{\infty}\int_0^{\infty}\sum_{\gamma\in G_4(n)}\frac{y_1^{s_1+\kappa/2}y_2^{s_2+\kappa/2}}{R_{\gamma}(iy_1,iy_2)}\frac{\,dy_1}{y_1}\frac{\,dy_2}{y_2}.\]
Here $G_4(n)=\Gamma_0(4)\begin{psmallmatrix}
	1&\\&n
\end{psmallmatrix}\Gamma_0(4)$ and $C_{\kappa}$ and $R_{\gamma}(z,z')$ are defined in \S \ref{subsec. kernel function}.  Depending on if $\gamma\in G_4(n)$ is a triangular matrix or not, we write $G_4(n)$ as a disjoint union and hence express $J(\mathbf{s},n)$ as the sum of singular orbital integrals $J_{\si}(\textbf{s},n)$ and regular orbital integrals $J_{\Reg}(\mathbf{s},n)$. 

The singular orbital integral can be computed directly and will contribute the main term (Proposition \ref{prop. main term for the general case}). For the regular orbital integral, we write them as sums of hypergeometric functions, which will later be transformed to Legendre functions. The upper bounds of the Legendre functions in different regions will imply that the regular orbital integrals contribute the error term (Proposition \ref{prop. est of error general}).

\bigskip 

For Theorem \ref{thm. subconvexity theorem}, we apply the amplification method, which was established in \cite{IwaniecSarnak1995}. Here we use the version in \cite[Section 11.2.2]{WeiYangZhao2024}. Instead of using the Fourier coefficients of half-integral weight modular forms, we utilized the Fourier coefficients of their Shimura correspondents as the amplifier. This new amplifier works as the Shimura correspondent is Hecke equivariant, which can also be regarded as an evidence for Hoffstein's ``simultaneous eigenfunction'' philosophy.

\bigskip

For Theorem \ref{thm. nonvanishing theorem}, we apply H\"older's inequality for the first moment. Then Theorem \ref{thm. second moment closed formula} will yield the desired result. Notice that for Theorem \ref{thm. nonvanishing theorem}, we only need $n=1,$ and Proposition \ref{prop. second moment n=1} provides a better error term in this case.

\bigskip

Finally, we outline the organization of the paper:
\begin{itemize}
	\item In \S \ref{sec. pre}, we do the preparation work.  In \S \ref{subsec. Hecke}, we briefly introduce the Hecke theory of half-integral weight modular forms. In \S \ref{subsec. kernel function}, we prove Proposition \ref{prop. kernel function}, which can be regarded as a Zagier's kernel function in the half-integral weight set up. Then we set up the integral $J(\mathbf{s},n)$ and express it as the summation of singular orbital integrals $J_{\si}(\mathbf{s},n)$ and the regular orbital integrals $J_{\Reg}(\mathbf{s},n).$
	\item In \S \ref{sec. singular}, we study the singular orbital integrals $J_{\si}(\mathbf{s},n)$, which will be written as the summation of small cell orbital integral $J_{\sm}(\mathbf{s},n)$ and the dual orbital integral $J_{\du}(\mathbf{s},n).$ They will be treated separately in \S \ref{subsec. small cell} and \S \ref{subsec. dual cell}. In \S \ref{subsec. summary of singular}, we combine the small cell orbital integral and the dual orbital integral to deduce Proposition \ref{prop. main term for the general case}. We also give the estimation of the singular orbital integral, which is Corollary \ref{cor. estimation of the singular orbital integral}.
	\item In \S \ref{sec. regular}, we study the regular orbital integrals $J_{\Reg}(\mathbf{s},n),$ which is a summation of three types of regular orbital integrals. We study them separately in \S \ref{subsec. reg 1}, \S \ref{subsec. reg 2} and \S \ref{subsec. reg 3}. The explicit regular orbital integral $J_{\Reg}(\mathbf{0},n)$ is stated in Proposition \ref{prop, the error term in RTF}. In \S \ref{subsec. majorization}, we show that $J_{\Reg}(\mathbf{0},n)$ only contribute the error term, which is Proposition \ref{prop. est of error general}.
	\item In \S \ref{sec. proof}, we prove the main results. In \S \ref{subsec. proof A}, we prove Theorem \ref{thm. second moment closed formula}. In \S \ref{subsec. proof C}, we prove Theorem \ref{thm. subconvexity theorem}. In \S \ref{subsec. proof B}, we prove Theorem \ref{thm. nonvanishing theorem}.
\end{itemize}

\subsection{Notation Guide}\label{subsec. notations}

\subsubsection{The half-integral weight modular forms} Denote by $\Gamma_0(4)$ the Hecke congruence subgroup of $\SL_2(\bZ)$ of level $4.$  Here we introduce the half-integral weight automorphic forms on $\Gamma_{0}(4)\backslash\mathbb{H}$. For any $\gamma=\begin{psmallmatrix}
	a&b\\c&d
    \end{psmallmatrix} \in \Gamma_0(4)$, we set
    \[
        j_{\gamma}(z)=\legendre{c}{d}\varepsilon_{d}^{-1}\sqrt{cz+d},
    \]
    where we take the principal branch of the square-root, $\tlegendre{c}{d}$ is the Jacobi symbol with the stipulations $\tlegendre{c}{d} = \tlegendre{-c}{-d}$ and $\tlegendre{0}{d} = 1$, and $\varepsilon_{d} = 1,i$ depending on whether $d \equiv 1,3 \tmod{4}$, respectively. Moreover, we set $j_{\gamma}(z)=j_{-\gamma}(z)$ if $d<0.$ This makes $j_{\gamma}(z)$ into the theta multiplier. We also set
    \[j(\gamma,z)=cz+d.\]
    Then $|j_{\gamma}(z)|^2=|j(\gamma,z)|$. Let $\kappa$ be a positive half-integer. A holomorphic form of half-integral weight $\kappa$ is a holomorphic function $f:\bH\to\bC$ satisfying 
    \begin{equation}\label{eq. automorphy of half-integral weight form}
        f(\gamma z)=j_{\gamma}(z)^{2\kappa}f(z),
    \end{equation}
    for all $\gamma \in \Gamma_{0}(4)$, and such that $f(\alpha z) \ll 1$ as $y \to \infty$ for all $\alpha \in \mathrm{GL}_{2}^{+}(\mathbb{Q})$. We say $f(z)$ is a cusp form if the last condition can be replaced by $f(\alpha z) \to 0$. Denote by $\cS_{\kappa}(\Gamma_0(4))$ the vector space of holomorphic cusp forms of weight $\kappa.$ This is a Euclidean space endowed with the Petersson inner product. That is, for 
    $f,g\in\cS_{\kappa}(\Gamma_0(4))$ their inner product is given by
    \[\langle f,g\rangle=\int_{\Gamma_0(4)\backslash\bH}f(z)\overline{g(z)}y^{\kappa}\,d\mu(z).\]
\subsubsection{The $L$-functions} For any $f\in\cS_{\kappa}(\Gamma_0(4))$, we associate the $L$-function
    \[L(s,f)=\sum_{n=1}^{\infty}\frac{a_f(n)}{n^s}.\]
    This Dirichlet series is absolutely convergent when $\Re(s)>1$.  Moreover, $L(s,f)$ admits analytic continuation to $\mathbb{C}$ and can be expressed via the following Mellin transform:
    \begin{equation}\label{eq. integral representation for L functions}
    \int_0^{\infty}f(iy)y^{\frac{\kappa}{2}+s}\frac{\,dy}{y}=(2\pi)^{-(s+\frac{\kappa}{2})}\Gamma\left(s+\frac{\kappa}{2}\right)L\left(\frac{1}{2}+s,f\right).
    \end{equation}
\subsubsection{The transformations of matrices}\label{subsubsec: transformation-matrices} Let $\gamma=\begin{psmallmatrix}
    a&b\\c&d
    \end{psmallmatrix}$ be an integral $2\times 2$ matrix. We define
    \begin{equation}\label{eq. matrix notations}
    \widehat{\gamma}=\begin{pmatrix}
    	-a&b\\c&-d
    \end{pmatrix},\quad \overline{\gamma}=\begin{pmatrix}
    	-a&-b\\c&d
    \end{pmatrix},\quad \underline{\gamma}=\begin{pmatrix}
    	a&-b\\c&-d
    \end{pmatrix},\quad {}^{\iota}\gamma=\begin{pmatrix}
    	a&-b\\c&d
    \end{pmatrix},\quad \gamma^{\iota}=\begin{pmatrix}
    	-a&-b\\c&-d
    \end{pmatrix}.
    \end{equation}
    These matrices will be used in \S \ref{sec. regular}, see Remark \ref{rem: matrix-discussion-sec4} for a discussion on the motivation behind these matrices.

\subsubsection{Hypergeometric functions and the beta function}\label{subsubsec.hypergeo} Denote by $F(\alpha,\beta;\gamma;z):={}_2F_1(\alpha,\beta;\gamma;z)$ the hypergeometric function, and by ${}_1F_1(\alpha,\gamma;z)$ the confluent hypergeometric function. Let $B(x,y)=\frac{\Gamma(x)\Gamma(y)}{\Gamma(x+y)}$ be the beta function. Let $s$ be a complex number, and we introduce:
$$\mathcal{B}(s):=\mathcal{B}(s;n,\kappa)=\frac{2n^{\kappa-1}B(s+\kappa/2,-s+\kappa/2)}{i^{s+\kappa/2}C_{\kappa}}.$$
\subsubsection{The upper and lower triangular matrices}\label{subsubsec.upper and lower triangualr matrices} Fix an odd square integer $n$. Throughout we will consider upper and lower triangular matrices of the form
    \[
        \gamma^{a,d,m} = \begin{pmatrix} a & m \\ & d \end{pmatrix} \quad \text{and} \quad \gamma_{a,d,m} = \quad \begin{pmatrix} a & \\ m & d \end{pmatrix}
    \]
    subject to the conditions $ad = n$, $m\Mod{d}$, and $\gcd(a,d,m) = 1$. In addition, for the lower triangular matrices we assume $m \neq 0$ and $m \equiv 0 \tmod{4}$. With these matrices, it will be prudent to define some associated notation: 
     \[
        g = \gcd(a,d), \quad g_{1} = \gcd(m,d), \quad h=\gcd(a,m), \quad \text{and} \quad h_1=\gcd\left(h,\frac{ad}{h}\right).
    \]
   Let $g$ be any positive integer (not necessarily the greatest common divisor of $a,d$). We can write: 
    \[
     g = g_{\ast}(g')^{2}, \quad \text{and} \quad g_{\ast} = p_{1} \cdots p_{s},
    \]
    so that $g_{\ast}$ is the square-free part of $g$ made up of unique primes $p_{1},\ldots,p_{s}$ and $g'$ is the square part of $g$. Then $\chi_g=\legendre{\cdot}{g}$ is induced from the primitive character $\chi_{g_*}=\chi_{p_1\cdots p_s}$ with conductor $g_{\ast}=p_1\cdots p_s.$ Note that $g_{\ast}$ and $g'$ may not be relatively prime. Moreover, we also set
    \[
        \rad(g)=\prod_{p|g}p, \quad g_{0} = \frac{\rad(g)}{g_{\ast}},  \quad \text{and} \quad g_{2} = \frac{g}{\rad(g)}.
    \]

\textbf{Acknowledgments} We would like to express our gratitude to Alexander Dunn, Jeffrey Hoffstein, Wenzhi Luo, and Liyang Yang for taking a look at the first version of this document and giving comments, encouragement, and valuable suggestions.

\section{Preliminaries}\label{sec. pre}
\subsection{The Complex Logarithm and Complex Power Functions} In what follows, we always take the principal branch of the logarithm. That is, in polar coordinates $z=re^{i\theta}\in\bC$ with $\theta\in(-\pi,\pi]$ and $r>0.$ Denote by $\log$ the complex logarithm function, and its domain is $\bC-(-\infty,0].$ When the complex logarithm function is defined, we can also define the complex power function with the same domain by
\[z^{\alpha}:=e^{\alpha\log z},\]
where $\alpha\in\bC.$ We have the following results for the complex power function:
\begin{lemma}\label{lemma. complex power st}
    Let $z_1=r_1e^{i\theta_1}$ and $z_2=r_2e^{i\theta_2}$  be two complex numbers with $\theta_1,\theta_2\in(-\pi,\pi)$. Then
   \[z_1^{\alpha}z_2^{\alpha}=\begin{cases}
   	(z_1z_2)^{\alpha}&\mbox{if $\theta_1+\theta_2\in(-\pi,\pi)$},\\
   	(z_1z_2)^{\alpha}e^{2\alpha\pi i}&\mbox{if $\theta_1+\theta_2>\pi$},\\
   	(z_1z_2)^{\alpha}e^{-2\alpha\pi i}&\mbox{if $\theta_1+\theta_2<-\pi$}.\\
   \end{cases}\]   
\end{lemma}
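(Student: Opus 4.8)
The statement to prove is Lemma~\ref{lemma. complex power st}, which describes how the complex power function interacts with products when branch cuts are crossed.

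\textbf{Approach.} The plan is to reduce everything to a statement about the argument (principal value) of a product of two complex numbers, then exponentiate. Write $z_1 = r_1 e^{i\theta_1}$ and $z_2 = r_2 e^{i\theta_2}$ with $\theta_1,\theta_2 \in (-\pi,\pi)$. Then $z_1 z_2 = r_1 r_2 e^{i(\theta_1+\theta_2)}$, but $\theta_1 + \theta_2$ need not lie in $(-\pi,\pi]$, so it is generally \emph{not} the principal argument of $z_1 z_2$. The key elementary observation is that $\arg(z_1 z_2) = \theta_1 + \theta_2 + 2\pi k$ where $k \in \{-1,0,1\}$ is chosen so that the sum lands in $(-\pi,\pi]$: precisely, $k=0$ if $\theta_1+\theta_2 \in (-\pi,\pi)$, $k=-1$ if $\theta_1 + \theta_2 > \pi$, and $k=+1$ if $\theta_1+\theta_2 < -\pi$. (The boundary values $\theta_1+\theta_2 = \pm\pi$ can be absorbed into the first case up to the usual convention, or handled separately; since the $z_i \notin (-\infty,0]$ the edge is harmless.)

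\textbf{Key steps.} First I would record the definition $z_j^\alpha = e^{\alpha \log z_j} = e^{\alpha(\ln r_j + i\theta_j)}$, valid since $\theta_j \in (-\pi,\pi)$ means $z_j \notin (-\infty,0]$. Multiplying,
\[
z_1^\alpha z_2^\alpha = e^{\alpha(\ln r_1 + \ln r_2)} e^{i\alpha(\theta_1+\theta_2)} = e^{\alpha \ln(r_1 r_2)} e^{i\alpha(\theta_1+\theta_2)}.
\]
Next I would compute $(z_1 z_2)^\alpha$ using the principal logarithm: $\log(z_1 z_2) = \ln(r_1 r_2) + i\,\Arg(z_1 z_2)$, where $\Arg(z_1 z_2) = \theta_1 + \theta_2 + 2\pi k$ with $k$ as above. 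Hence
\[
(z_1 z_2)^\alpha = e^{\alpha \ln(r_1 r_2)} e^{i\alpha(\theta_1 + \theta_2 + 2\pi k)} = e^{\alpha \ln(r_1 r_2)} e^{i\alpha(\theta_1+\theta_2)} e^{2\pi i \alpha k}.
\]
Comparing the two displays gives $z_1^\alpha z_2^\alpha = (z_1 z_2)^\alpha e^{-2\pi i \alpha k}$. Substituting $k = 0, -1, +1$ in the three respective cases yields exactly the three lines of the claimed formula (note the sign flip: $k=-1$ in the case $\theta_1+\theta_2>\pi$ produces the factor $e^{2\alpha\pi i}$, and $k=+1$ in the case $\theta_1+\theta_2 < -\pi$ produces $e^{-2\alpha\pi i}$). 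Finally I would remark that $z_1 z_2 \ne 0$ and $z_1 z_2 \notin (-\infty, 0]$ unless $\theta_1+\theta_2 = \pm\pi$, so $(z_1z_2)^\alpha$ is defined in all three cases (in the boundary case one adopts the $\theta\in(-\pi,\pi]$ convention).

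\textbf{Main obstacle.} There is no real obstacle here; this is a bookkeeping lemma. The only subtle point is making sure the value of $k$ (equivalently, the direction of the $2\pi$ correction to the argument) is tracked with the correct sign, and confirming that in each of the three regimes $z_1 z_2$ stays off the negative real axis so that the principal branch is actually defined. I would double-check the sign convention by testing a concrete example, e.g.\ $z_1 = z_2 = e^{3\pi i/4}$ (so $\theta_1+\theta_2 = 3\pi/2 > \pi$): then $z_1 z_2 = e^{3\pi i/2} = e^{-i\pi/2}$ has principal argument $-\pi/2$, and indeed $-\pi/2 = 3\pi/2 - 2\pi$, giving $k=-1$ and the factor $e^{2\alpha\pi i}$ as claimed.
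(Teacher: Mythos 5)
Your argument is correct. The paper states this lemma without proof (it is treated as routine), and your approach — reducing to the observation that $\Arg(z_1z_2)=\theta_1+\theta_2+2\pi k$ with $k\in\{-1,0,1\}$ chosen to land in $(-\pi,\pi]$, then exponentiating and tracking the sign of $k$ — is exactly the standard bookkeeping one would carry out, with the sign convention verified correctly (and the numerical sanity check at $\theta_1=\theta_2=3\pi/4$ confirms it). The only minor point worth being explicit about is that the boundary cases $\theta_1+\theta_2=\pm\pi$ are genuinely excluded from the lemma, since then $z_1z_2$ lies on the negative real axis where the principal power is undefined; you note this, and it is consistent with the lemma's case split.
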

Finally, for $z = re^{i\theta} \in\bC-(-\infty,0]$ (so that $\theta \in (-\pi,\pi)$), we let $\arg z = \theta$ denote the argument of $z$.
\subsection{The Hecke Operators} \label{subsec. Hecke}
In this section, we introduce the Hecke operators on $\cS_{\kappa}(\Gamma_0(4)).$ We define
\[M_4^+:=\left\{\gamma=\begin{pmatrix}
	a&b\\c&d
\end{pmatrix}: a,b,c,d\in\bZ,4|c,ad-bc>0\right\}.\]
Then we have the decomposition
\[M_4^+=\bigsqcup_{n=1}^{\infty}M_4^+(n),\]
where
\[M_4^+(n):=\left\{\gamma=\begin{pmatrix}
	a&b\\c&d
\end{pmatrix}: a,b,c,d\in\bZ,4|c,ad-bc=n\right\}.\]
Notice that $M_4^+(1)=\Gamma_0(4).$ The following proposition allows us to extend the definition of $j_\gamma(z)$ from $\Gamma_0(4)$ to all of $M_4^+$. It can be deduced from \cite[Section 1]{Shimura1977}.
\begin{prop}\label{prop:t_gamma_for_any_matrix}
For any $\gamma\in M_4^+$, there exists a function $j_{\gamma}(z)$ such that the following properties hold:
\begin{itemize}
	\item Cocycle condition:
	\begin{equation}\label{eq. cocyle condition}
		j_{\gamma_1\gamma_2}(z)=j_{\gamma_1}(\gamma_2 z)j_{\gamma_2}(z).
	\end{equation}

	\item Compatibility condition: if $\gamma=\begin{psmallmatrix}
	a&b\\c&d
\end{psmallmatrix}\in\Gamma_0(4),$ then
\begin{equation}\label{eq. compatibility condition}
		j_{\gamma}(z)=\legendre{c}{d}\varepsilon_d^{-1}\sqrt{cz+d}.
	\end{equation}

\item Modulus condition:  for $\gamma\in M_4^+,$
\begin{equation}\label{eq. modulus condition}
		j_{\gamma}(z)=t_{\gamma}\sqrt{cz+d},
	\end{equation}
with $t_{\gamma}\in\{\pm 1,\pm i\}$ so that $|t_{\gamma}|=1$. Moreover, $t_{\gamma}$ is independent of the choice of $z.$
\end{itemize}
\end{prop}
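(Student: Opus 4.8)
The statement to prove is Proposition \ref{prop:t_gamma_for_any_matrix}: for every $\gamma \in M_4^+$ there is a function $j_\gamma(z)$ satisfying the cocycle condition, the compatibility condition on $\Gamma_0(4)$, and the modulus condition $j_\gamma(z) = t_\gamma\sqrt{cz+d}$ with $t_\gamma \in \{\pm 1, \pm i\}$ independent of $z$.

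\begin{proof}[Proof proposal]
The plan is to reduce everything to the known theta-multiplier on $\Gamma_0(4)$ via a coset decomposition of $M_4^+(n)$. First I would recall from Shimura \cite{Shimura1977} that the group generated by $\Gamma_0(4)$ together with the scaling matrices $\begin{psmallmatrix} 1 & \\ & m \end{psmallmatrix}$ (for $m$ a positive integer) and $\begin{psmallmatrix} m & \\ & 1 \end{psmallmatrix}$ acts on half-integral weight forms through an automorphy factor that extends $j_\gamma$; concretely, every $\gamma \in M_4^+(n)$ can be written as $\gamma = \gamma_1 \begin{psmallmatrix} a & \\ & d \end{psmallmatrix} \gamma_2$ with $\gamma_1, \gamma_2 \in \Gamma_0(4)$ and $ad = n$ (an elementary divisor / Smith normal form argument, using $4 \mid c$ to stay inside $\Gamma_0(4)$). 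One then \emph{defines} $j_\gamma(z) := j_{\gamma_1}(\begin{psmallmatrix} a & \\ & d\end{psmallmatrix}\gamma_2 z)\cdot j_{\diag(a,d)}(\gamma_2 z) \cdot j_{\gamma_2}(z)$, where $j_{\diag(a,d)}(z)$ is set to equal $d^{1/4}a^{-1/4}$ up to a fixed fourth root of unity determined by Shimura's conventions (the point is that a diagonal matrix acts on $\sqrt{cz+d} = \sqrt{d}$ by a constant). The main content is then \textbf{well-definedness}: if $\gamma_1 \diag(a,d)\gamma_2 = \gamma_1' \diag(a',d')\gamma_2'$ are two such factorizations, one must check the two candidate values of $j_\gamma$ agree. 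Since $\det$ forces $\{a,d\} = \{a',d'\}$ up to the ordering allowed by $\Gamma_0(4)$ (and $a,d$ are the elementary divisors, hence genuinely unique), this comes down to verifying that for $\delta_1, \delta_2 \in \Gamma_0(4)$ with $\delta_1 \diag(a,d)\delta_2 = \diag(a,d)$, i.e. $\delta_2 = \diag(a,d)^{-1}\delta_1^{-1}\diag(a,d) \in \Gamma_0(4)$, the cocycle relation gives $j_{\delta_1}\cdot j_{\diag(a,d)} \cdot j_{\delta_2} = j_{\diag(a,d)}$; this is exactly the statement that the theta-multiplier is compatible with conjugation by $\diag(a,d)$, which is Shimura's original computation with Gauss sums and quadratic reciprocity.

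Once $j_\gamma$ is well defined, the \textbf{cocycle condition} \eqref{eq. cocyle condition} follows formally: given $\gamma_1, \gamma_2 \in M_4^+$, factor each through diagonal matrices, note that $\diag(a,d)\gamma \diag(a',d')^{-1}$-type manipulations let one merge the factorizations, and apply the cocycle relation on $\Gamma_0(4)$ (already known) together with the multiplicativity of the diagonal contribution $d^{1/4}a^{-1/4}$. The \textbf{compatibility condition} \eqref{eq. compatibility condition} is immediate from taking the trivial factorization $\gamma = \gamma \cdot \diag(1,1) \cdot I$ when $\gamma \in \Gamma_0(4)$. For the \textbf{modulus condition} \eqref{eq. modulus condition}, I would compute $|j_\gamma(z)|$: on $\Gamma_0(4)$ we have $|j_{\delta}(z)|^2 = |c z + d|$, and the diagonal factor has modulus $(d/a)^{1/4}$; chaining these through the factorization $\gamma = \gamma_1 \diag(a,d)\gamma_2$ and using that the bottom row of $\gamma$ is $(c,d)$ with $cz+d$ being the product of the three lower-right entries (in the $j(\cdot,z)$ sense, since $j(\gamma_1\gamma_2, z) = j(\gamma_1,\gamma_2 z)j(\gamma_2,z)$ is multiplicative), one gets $|j_\gamma(z)|^2 = |cz+d|$, so $j_\gamma(z)/\sqrt{cz+d}$ has modulus $1$. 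That this ratio is a \emph{constant} $t_\gamma$ (independent of $z$) follows because both $j_\gamma(z)$ and $\sqrt{cz+d}$ are holomorphic and nonvanishing on $\bH$, so their ratio is a holomorphic function $\bH \to \{|w| = 1\}$, hence constant by the open mapping theorem (or maximum modulus). Finally $t_\gamma \in \{\pm1,\pm i\}$: one checks this for the diagonal matrices and for $\Gamma_0(4)$ (where $t_\delta = \legendre{c}{d}\varepsilon_d^{-1} \in \{\pm1,\pm i\}$), and the cocycle relation shows the set $\{\pm1,\pm i\}$ is closed under the resulting multiplication, so all $t_\gamma$ lie in it.

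The step I expect to be the main obstacle is the well-definedness computation, i.e. controlling the ambiguity in the factorization $\gamma = \gamma_1\diag(a,d)\gamma_2$. The subtlety is twofold: first, one needs the uniqueness of the elementary divisors $(a,d)$ subject to $4 \mid c$ throughout, which requires care about which double cosets $\Gamma_0(4)\diag(a,d)\Gamma_0(4)$ are distinct; second, the stabilizer computation $\delta_2 = \diag(a,d)^{-1}\delta_1^{-1}\diag(a,d) \in \Gamma_0(4)$ constrains $\delta_1$ (its off-diagonal entries must be divisible by appropriate powers of $a/\gcd(a,d)$ and $d/\gcd(a,d)$), and verifying $j_{\delta_1}(\diag(a,d)z)\,j_{\diag(a,d)}(z)\,j_{\delta_2}(z) = j_{\diag(a,d)}(z)$ for all such $\delta_1$ is precisely where quadratic reciprocity and Gauss sum evaluations enter — this is the heart of Shimura's theory and I would cite \cite[Section 1]{Shimura1977} for the explicit verification rather than reproduce it, noting only that the conditions $n$ odd (and eventually $n$ square) keep all the relevant Jacobi symbols and $\varepsilon_d$ factors under control so that no genuinely new identity beyond Shimura's is needed.
\end{proof}
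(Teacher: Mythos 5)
Your overall strategy — factor $\gamma$ through a diagonal matrix via Smith normal form inside $\Gamma_0(4)$, define $j_\gamma$ by the cocycle, check well-definedness against the stabilizer, and defer the Gauss-sum/quadratic-reciprocity core to Shimura — is indeed the route behind the paper's terse citation of \cite[Section~1]{Shimura1977}. However, there is a concrete inconsistency in your normalization of the diagonal automorphy factor that breaks the modulus condition you set out to prove.

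You set $j_{\diag(a,d)}(z) = (\text{root of unity})\cdot d^{1/4}a^{-1/4}$, which is Shimura's normalization (his $\phi$ satisfies $|\phi(z)|^2 = \det(\alpha)^{-1/2}|cz+d|$). But Proposition~\ref{prop:t_gamma_for_any_matrix} as stated requires $|j_\gamma(z)|^2 = |cz+d|$, and for $\gamma = \diag(a,d)$ that forces $|j_{\diag(a,d)}| = d^{1/2}$, not $(d/a)^{1/4}$. The two differ by $\det(\gamma)^{1/4}$ — exactly the discrepancy the paper flags in the remark immediately following the proposition. With your choice, chaining through $\gamma = \gamma_1\diag(a,d)\gamma_2$ gives $|j_\gamma(z)|^2 = (d/a)^{1/2}\,|j(\gamma_1,\cdot)|\,|j(\gamma_2,\cdot)| = |j(\gamma,z)|\cdot (d/a)^{1/2}/d = |cz+d|/\det(\gamma)^{1/2}$, contradicting the $|j_\gamma(z)|^2 = |cz+d|$ you assert in the modulus paragraph. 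You even note parenthetically that $\sqrt{cz+d} = \sqrt{d}$ for a diagonal matrix, which is the correct observation, but then you don't act on it. The fix is simply to define $j_{\diag(a,d)} = t\cdot d^{1/2}$ with $t$ a fourth root of unity; once this is done, your modulus computation, the well-definedness argument, and the holomorphy/open-mapping argument for constancy of $t_\gamma$ all go through as you sketch them. (One further caveat, shared with the paper's own formulation: the Smith-normal-form factorization over $\Gamma_0(4)$ is only carried out in the paper for matrices of odd determinant, via Lemmas~\ref{lem. decom lem in linear algebra} and~\ref{lem. total set of the Hecke operator}; your claim that it holds on all of $M_4^+$ with both factors in $\Gamma_0(4)$ deserves justification when $\det\gamma$ is even, but since the paper uses the proposition only on $G_4(n)$ with $n$ odd, this does not affect the downstream results.)
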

\begin{remark}
	Our choice of $j_{\gamma}(z)$ differs by a factor of $\det(\gamma)^{1/4}$ from that in  \cite{Shimura1977}.
\end{remark}

Let $n$ be an odd integer. The following two lemmas will be used to prove a coset decomposition of the form
\begin{equation}\label{equ:coset_decomp_for_Hecke}
G_4(n):=\Gamma_0(4)\begin{pmatrix}
	1&\\&n
\end{pmatrix}\Gamma_0(4)=\bigsqcup_{j=1}^r\Gamma_0(4)\gamma_j,\end{equation}
for some matrices $\gamma_j \in M_{4}^{+}$ and $r \ge 1$. This coset decomposition is essential as it allows us to define the Hecke operators. Notice that $G_4(n)$ is not the same as $M_4^+(n)$ and $G_{4}(1) = \Gamma_{0}(4)$. We first prove the following useful lemma:
\begin{lemma}\label{lem. decom lem in linear algebra}
Let $n$ be an odd square integer. Let $\begin{psmallmatrix}
a&m\\&d	
\end{psmallmatrix}\in M_4^+(n)$ be such that $\gcd(a,d,m)=1.$ Then we can find $\gamma_1,\gamma_2\in \Gamma_0(4)$ satisfying
\[\begin{pmatrix}
a&m\\&d	
\end{pmatrix}=\gamma_1\begin{pmatrix}
1&\\&ad	
\end{pmatrix}\gamma_2.\]
Moreover, $\gamma_1$ and $\gamma_2$ satisfy \eqref{eq. explicit smith normal form}.
\end{lemma}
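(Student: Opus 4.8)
\textbf{Proof proposal for Lemma \ref{lem. decom lem in linear algebra}.}

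The plan is to reduce the assertion to the elementary divisor theorem (Smith normal form) for integer matrices, while carefully tracking that the conjugating matrices can be chosen inside $\Gamma_0(4)$ rather than merely in $\SL_2(\bZ)$. First I would observe that the matrix $\begin{psmallmatrix} a & m \\ & d \end{psmallmatrix}$ has determinant $ad = n$ and, because $\gcd(a,d,m) = 1$, its content (the gcd of all entries) is $1$; hence its Smith normal form over $\bZ$ is $\diag(1, n)$. This already produces $\sigma_1, \sigma_2 \in \SL_2(\bZ)$ with $\begin{psmallmatrix} a & m \\ & d \end{psmallmatrix} = \sigma_1 \begin{psmallmatrix} 1 & \\ & ad \end{psmallmatrix} \sigma_2$. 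The work is to upgrade $\sigma_1, \sigma_2$ to elements of $\Gamma_0(4)$.

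Next I would make the row/column operations explicit so the level-$4$ condition can be monitored. Since $\gcd(a, m, d) = 1$ we may first perform column operations (multiplication on the right by an element of $\SL_2(\bZ)$) to replace the top row $(a, m)$ by $(\gcd(a,m), 0)$, or, combining with the bottom row, arrange the $(1,1)$ entry to be $1$: concretely, choose integers so that a unimodular column combination of $(a,m)$ and a row combination involving $d$ yields a $1$ in the upper-left corner; then clear the rest of the first row and first column by further integer operations, landing on $\diag(1, ad)$. The key point is that all these operations are lower-triangular-friendly: because the original matrix is upper triangular with $4 \nmid$ no constraint yet, one arranges the final $\gamma_1, \gamma_2$ to have lower-left entry divisible by $4$. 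I expect this to work by choosing the Bézout coefficients with an extra congruence condition mod $4$, which is possible precisely because $n$ is \emph{odd} (so $ad$ is odd, and the relevant residues mod $4$ are invertible). This is exactly where oddness of $n$ is used.

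The main obstacle, and the step I would spend the most care on, is producing $\gamma_1, \gamma_2 \in \Gamma_0(4)$ rather than just in $\SL_2(\bZ)$, together with verifying the precise explicit form asserted in \eqref{eq. explicit smith normal form}. Two elements of $\SL_2(\bZ)$ realizing the same Smith form differ by the stabilizer of $\diag(1, ad)$, which consists of matrices of the shape $\begin{psmallmatrix} \pm 1 & * \\ 0 & \pm 1 \end{psmallmatrix}$ on one side and a conjugate shape on the other; I would use this freedom, plus the oddness of $ad$, to adjust the lower-left entries of $\sigma_1$ and $\sigma_2$ into the residue class $0 \bmod 4$. Since $ad$ is a unit mod $4$, multiplying by a suitable unipotent changes the lower-left entry by any multiple of $ad$, hence by any residue mod $4$, so the adjustment is always possible. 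Once $\gamma_1, \gamma_2 \in \Gamma_0(4)$ are secured, I would record their explicit entries (in terms of Bézout coefficients for $a, d, m$) to match \eqref{eq. explicit smith normal form}, and check directly that the product equals $\begin{psmallmatrix} a & m \\ & d \end{psmallmatrix}$; that final verification is a routine $2 \times 2$ multiplication.
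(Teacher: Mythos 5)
Your high-level plan — start from the Smith normal form over $\bZ$ and then twist $\sigma_1,\sigma_2$ so that they land in $\Gamma_0(4)$ — is a reasonable alternative strategy, but as written it has two genuine gaps.

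First, your description of the twisting group is wrong, and the gap is not merely cosmetic. You assert that two $\SL_2(\bZ)$-decompositions of the same Smith form differ by upper-triangular matrices $\begin{psmallmatrix} \pm 1 & * \\ 0 & \pm 1\end{psmallmatrix}$ on one side. In fact, if $\sigma_1 D \sigma_2 = \gamma_1 D \gamma_2$ with $D=\diag(1,ad)$, then writing $\gamma_1=\sigma_1 U^{-1}$ forces $\gamma_2 = (D^{-1}UD)\sigma_2$, and for $D^{-1}UD$ to be integral one needs $U\in\Gamma_0(ad)$; the correct twisting group is $\Gamma_0(ad)$ on the left and $\Gamma^0(ad)=D^{-1}\Gamma_0(ad)D$ on the right, not the Borel subgroup. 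With only upper-triangular $U$ you cannot adjust the lower-left entry of $\sigma_1 U^{-1}$: for $U=\begin{psmallmatrix} u_1 & u_2 \\ 0 & u_4\end{psmallmatrix}$ and $\sigma_1=\begin{psmallmatrix} p&q\\r&s\end{psmallmatrix}$ the lower-left of $\sigma_1 U^{-1}$ is $r u_1$ with $u_1=\pm1$, so it stays congruent to $\pm r\bmod 4$ and the desired adjustment fails whenever $4\nmid r$. The correct version of your idea does work: since $\gcd(ad,4)=1$, the reduction map $\Gamma_0(ad)\to\SL_2(\bZ/4\bZ)$ is surjective, so one can choose $U\in\Gamma_0(ad)$ with $U\equiv\sigma_1\pmod4$; then $\gamma_1=\sigma_1U^{-1}\in\Gamma(4)\subset\Gamma_0(4)$, and a short computation using $\sigma_2=D^{-1}\sigma_1^{-1}A$ shows $\gamma_2\equiv D^{-1}A\pmod4$ is upper-triangular mod $4$, hence in $\Gamma_0(4)$. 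But none of this is what you wrote, and the step you identify as requiring the most care is precisely the one where your stated mechanism breaks down.

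Second, the lemma is not just an existence statement: it asserts that $\gamma_1,\gamma_2$ satisfy the \emph{specific} explicit form \eqref{eq. explicit smith normal form}, and that form is what drives the later evaluation of the multiplier $t_\gamma$ in Lemma \ref{lem. t value in the small orbital integral}. You defer this to a final ``record the explicit entries and check'' step, but an abstract twisting argument produces \emph{some} $\gamma_1,\gamma_2$, not the particular Bézout matrices of \eqref{eq. explicit smith normal form}; recovering them would require choosing the Bézout data with the same care the paper takes. The paper's proof is different in spirit: it writes down $\gamma_1,\gamma_2$ directly from a Bézout relation $mx+dy=g_1=\gcd(m,d)$, after first arranging (via Dirichlet's theorem on primes in arithmetic progressions) that $x$ can be taken to be a prime with $x\equiv1\pmod4$ and $\gcd(x,d)=1$, which guarantees $\gcd(ax,4g_1)=1$ and lets one solve $axs-4g_1r=1$. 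The constraint $x\equiv1\pmod4$ is also used later in evaluating $t_\gamma$, so the constructive route is not an incidental choice; your proposal would still need to reproduce those normalizations of the Bézout coefficients to finish the job.
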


\begin{proof}
	Recall from \S \ref{subsubsec.upper and lower triangualr matrices} that $g_1=\gcd(m,d)$, so we can find $x,y\in\bZ$ such that $mx+dy=g_1.$ We claim that $x$ may be assumed to be a positive prime satisfying $\gcd(x,d)=1$ and $x\equiv1\Mod{4}.$ We will assume this condition on $x$ for now and delay its proof to the end. Since $\gcd(a,d,m)=1$ and $ad=n$ is odd, we have $\gcd(ax,4g_1)=1$ (notice that $\gcd(x,d)=1$ implies $(x,g_1)=1$). Then we can find integers $s$ and $r$ such that $axs-4g_1r=1.$ Now let $\gamma_{1},\gamma_{2} \in \Gamma_{0}(4)$ be the matrices
    \[
    \gamma_1=\begin{pmatrix}
		as-4rm&-y\\-4dr&x
	\end{pmatrix} \quad \text{and} \quad \gamma_2=\begin{pmatrix}
		ax&g_1\\4r&s
	\end{pmatrix},
    \]
    and note that
	\begin{equation}\label{eq. explicit smith normal form}
	\begin{pmatrix}
    a&m\\&d	
    \end{pmatrix}=\gamma_{1}\begin{pmatrix}
    1&\\&ad	
    \end{pmatrix}\gamma_{2} = \begin{pmatrix}
    		as-4rm&-y\\-4dr&x
    	\end{pmatrix}\begin{pmatrix}
    1&\\&ad	
    \end{pmatrix}\begin{pmatrix}
    		ax&g_1\\4r&s
	\end{pmatrix},
	\end{equation}
as desired.

Finally, we prove the claim on the conditions we assumed that $x$ satisfied. Our $x$ and $y$ from above satisfy $\frac{m}{g_1}x+\frac{d}{g_1}y=1.$ Let $(x_0,y_0)$ be a solution to $\frac{m}{g_1}x+\frac{d}{g_1}y=1.$ Then $\gcd\left(x_0,\frac{d}{g_1}\right)=1$. By elementary number theory, $x=x_0+t\frac{d}{g_1}$ and $y=y_0-t\frac{m}{g_1}$ for $t \in \mathbb{Z}$ are all the solutions of the linear equation $\frac{m}{g_1}x+\frac{d}{g_1}y=1.$ As $\gcd\left(x_0,\frac{d}{g_1}\right)=1$, Dirichlet's theorem on primes in arithmetic progressions implies that there are infinitely many $t$ such that $x=x_0+t\frac{d}{g_1}$ is prime. Then we can choose large enough $t$ such that $x>0$, $\gcd(x,d)=1$, and $x\equiv1\Mod{4}$.
\end{proof}

We can use Lemma \ref{lem. decom lem in linear algebra} to furnish different decompositions of $G_4(n):$

\begin{lemma}\label{lem. total set of the Hecke operator}
Let $n$ be an odd square integer. Then the following sets of matrices are the same:  
\[
G_4(n)=\Gamma_0(4)\begin{pmatrix}
	1&\\&n
\end{pmatrix}\Gamma_0(4),\]
\[
G_4'(n)=M_4^+(n)\bigcap \left\{\gamma=\begin{pmatrix}
	a&b\\c&d
\end{pmatrix}\in M_4^+: \gcd(a,b,c,d)=1\right\},\]
and
\[G_4''(n)=\bigsqcup\limits_{\substack{ad=n\\ m\Mod{d}, \\ \gcd(a,d,m)=1}}\Gamma_0(4)\begin{pmatrix}
		a&m\\&d
	\end{pmatrix}.\]
\end{lemma}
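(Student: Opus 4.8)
The strategy is to establish the two set equalities $G_4(n) = G_4'(n)$ and $G_4'(n) = G_4''(n)$ by mutual inclusion. The second equality is the computational heart; the first is a coprimality bookkeeping exercise.

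I would begin with $G_4''(n) \subseteq G_4'(n)$, since this is where Lemma \ref{lem. decom lem in linear algebra} does the work. Given $\gamma^{a,d,m} = \begin{psmallmatrix} a & m \\ & d\end{psmallmatrix}$ with $ad = n$ and $\gcd(a,d,m) = 1$, note first that $\gamma^{a,d,m} \in M_4^+(n)$ has $\gcd$ of entries equal to $\gcd(a,d,m) = 1$, so $\Gamma_0(4)\gamma^{a,d,m} \subseteq G_4'(n)$ provided $G_4'(n)$ is left $\Gamma_0(4)$-stable — which is immediate since left multiplication by $\Gamma_0(4)$ preserves $M_4^+(n)$ and the gcd of the entry-row-span (the gcd of all entries is the gcd of the image lattice, invariant under $\GL_2(\bZ)$ on the left). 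This gives $G_4''(n) \subseteq G_4'(n)$. Conversely, for $G_4'(n) \subseteq G_4''(n)$: take $\gamma \in G_4'(n)$ and apply column reduction via right multiplication by $\Gamma_0(4)$ (or first reduce to Smith-type upper-triangular form), writing $\gamma = \gamma' \begin{psmallmatrix} a & m \\ & d\end{psmallmatrix}$ with $\gamma' \in \Gamma_0(4)$; one must check the reduction can be done staying inside $\Gamma_0(4)$ (the $4 \mid c$ condition), adjust $m$ modulo $d$ by a further upper-triangular unipotent in $\Gamma_0(4)$, and verify $\gcd(a,d,m) = 1$ descends from $\gcd$ of all entries of $\gamma$ being $1$. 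The disjointness of the union defining $G_4''(n)$ follows because distinct $(a,d,m)$ with $m \bmod d$ give distinct cosets — two such upper-triangular matrices differ on the left by $\Gamma_0(4)$ only if they are equal, by comparing the Smith normal form / the cusp $a/c$.

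For $G_4(n) \subseteq G_4'(n)$: any $\gamma \in G_4(n)$ is $\gamma_1 \begin{psmallmatrix} 1 & \\ & n\end{psmallmatrix} \gamma_2$ with $\gamma_i \in \Gamma_0(4)$; this lies in $M_4^+(n)$, and its entries' gcd is $1$ because the gcd is a left-and-right $\GL_2(\bZ)$-invariant of the lattice and $\begin{psmallmatrix} 1 & \\ & n\end{psmallmatrix}$ visibly has entry-gcd $1$. For the reverse inclusion $G_4'(n) \subseteq G_4(n)$, equivalently $G_4''(n) \subseteq G_4(n)$: given the upper-triangular representative $\begin{psmallmatrix} a & m \\ & d\end{psmallmatrix}$ with $ad = n$, $\gcd(a,d,m) = 1$, Lemma \ref{lem. decom lem in linear algebra} applies verbatim (this is exactly its hypothesis) to write it as $\gamma_1 \begin{psmallmatrix} 1 & \\ & ad\end{psmallmatrix}\gamma_2 = \gamma_1\begin{psmallmatrix} 1 & \\ & n\end{psmallmatrix}\gamma_2$ with $\gamma_i \in \Gamma_0(4)$, hence it lies in $G_4(n)$; since $G_4(n)$ is left $\Gamma_0(4)$-stable, the whole coset is in $G_4(n)$.

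The main obstacle is the careful handling of the $4 \mid c$ congruence condition throughout the reductions: ordinary Smith normal form uses all of $\SL_2(\bZ)$, but here every elementary operation must be realized inside $\Gamma_0(4)$, so one must check at each step (column-clearing, reducing $m$ modulo $d$) that the transforming matrix can be chosen with lower-left entry divisible by $4$. This is exactly the kind of coprimality juggling already performed in the proof of Lemma \ref{lem. decom lem in linear algebra} (using $n$ odd and Dirichlet's theorem), and I would lean on that same mechanism — in particular the explicit matrices in \eqref{eq. explicit smith normal form} — rather than an abstract Smith normal form argument. A secondary, more routine point is verifying disjointness of the union in $G_4''(n)$, which reduces to the standard fact that the cosets $\Gamma_0(4)\begin{psmallmatrix} a & m \\ & d\end{psmallmatrix}$ are parametrized by the divisor $a \mid n$ together with $m$ modulo $d$ subject to $\gcd(a,d,m) = 1$.
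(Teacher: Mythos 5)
Your proposal is correct and follows essentially the same chain of inclusions as the paper, with Lemma \ref{lem. decom lem in linear algebra} doing the work for $G_4''(n)\subseteq G_4(n)$ exactly as in the paper's proof. The one genuine variation is the inclusion $G_4(n)\subseteq G_4'(n)$: you invoke invariance of the entry-gcd (the Smith invariant $d_1$) under left and right $\GL_2(\bZ)$-multiplication, which is cleaner than the paper's prime-by-prime contradiction argument starting from $\gamma=\gamma_1\begin{psmallmatrix}1&\\&n\end{psmallmatrix}\gamma_2$, though both are short. For $G_4'(n)\subseteq G_4''(n)$ you are over-planning: the paper simply cites the standard left $\Gamma_0(4)$-coset decomposition $M_4^+(n)=\bigsqcup_{ad=n,\,m\Mod d}\Gamma_0(4)\begin{psmallmatrix}a&m\\&d\end{psmallmatrix}$ (valid since $\gcd(n,4)=1$) and then restricts to the entry-gcd-one condition on both sides, so there is no need to re-derive a $\Gamma_0(4)$-internal reduction with Dirichlet's theorem; that heavier coprimality mechanism is needed only in Lemma \ref{lem. decom lem in linear algebra} itself for the $G_4''\subseteq G_4$ step. (Also a small terminological slip: writing $\gamma=\gamma'\begin{psmallmatrix}a&m\\&d\end{psmallmatrix}$ with $\gamma'\in\Gamma_0(4)$ is row reduction via left multiplication, not column reduction via right multiplication.)
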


\begin{proof}
To prove this, it suffices to show the following chain of inclusions $G_4(n)\subseteq G_4'(n)\subseteq G_4''(n)\subseteq G_4(n)$. 

We begin by showing that $G_4(n)\subseteq G_4'(n)$. 	Take $\gamma\in G_4(n).$ Then we can find $\begin{psmallmatrix}
	a&b\\c& d\end{psmallmatrix}, \begin{psmallmatrix}
	x&y\\z& w\end{psmallmatrix}\in\Gamma_0(4)$ such that
    \[\gamma=\begin{pmatrix}a&b\\c& d\end{pmatrix}\begin{pmatrix}1&\\& n\end{pmatrix}\begin{pmatrix}
    	x&y\\z& w\end{pmatrix}=\begin{pmatrix}
    	ax+bzn&ay+bwn\\cx+dzn&cy+dwn\end{pmatrix}\in M_4^+(n).\]
    Assume that $p|\gcd(ax+bzn,ay+bwn,cx+dzn,cy+dwn).$ Then $p$ divides the determinant of $\gamma$ and hence $p|n$, which implies $p|\gcd(ax,ay,cx,cy).$ Without loss of generality, we can assume that $p|a$ and $p\nmid c$, as $\begin{psmallmatrix}
	a&b\\c& d\end{psmallmatrix}\in\Gamma_0(4)$ has determinant $1$. This forces $p|\gcd(x,y)$ which contradicts that $\begin{psmallmatrix}
	x&y\\z& w\end{psmallmatrix}\in\Gamma_0(4)$ having determinant $1$. Therefore $\gcd(ax+bzn,ay+bwn,cx+dzn,cy+dwn) = 1$ as desired.
    
Now we show that $G_4'(n)\subseteq G_4''(n)$. This is obvious upon noting that
    \[M_4^+(n)=\bigsqcup\limits_{\substack{ad=m\\ m\Mod{d}}}\Gamma_0(4)\begin{pmatrix}
		a&m\\&d
	\end{pmatrix}.\]
Thus, adding the $\gcd$ condition makes it clear that $G_4'(n)\subseteq G_4''(n)$.

Finally, we shall show that $G_4''(n)\subseteq G_4(n)$. It suffices to show that if $\begin{psmallmatrix}
		a&m\\&d
	\end{psmallmatrix}$ satisfies $\gcd(a,d,m)=1$ then we can find $\gamma_1,\gamma_2\in\Gamma_0(4)$ such that $\begin{psmallmatrix}
		a&m\\&d
	\end{psmallmatrix}=\gamma_1\begin{psmallmatrix}
		1&\\&ad
	\end{psmallmatrix}\gamma_2.$ This is exactly what was proved in Lemma \ref{lem. decom lem in linear algebra}. 
\end{proof}

With the decomposition in \eqref{equ:coset_decomp_for_Hecke} in hand, we define the Hecke operator $T_{n}$ by
\[T_{n}f(z)=n^{\frac{\kappa}{2}}\sum_{j=1}^rj_{\gamma_j}(z)^{-2\kappa}f(\gamma_j z).\]
When $n$ is not a square number, Shimura \cite[Page 450]{Shimura1977} showed that the Hecke operator vanishes.

Similar to the integral weight Hecke theory, we can find a Hecke orthogonal basis $H_{\kappa}(4)$. In \cite{Shimura1977,Niwa1975}, a correspondence between $\cS_{\kappa}(\Gamma_0(4))$ and $\cS_{2\kappa-1}(\Gamma_0(2))$ is given, which is now known as the Shimura correspondence. For $f\in\cS_{\kappa}(\Gamma_0(4))$, denote by $F:=F_f\in\cS_{2\kappa-1}(\Gamma_0(2))$ the Shimura correspondent of $f.$ The Shimura correspondence sends a Hecke eigenform of $\cS_{\kappa}(\Gamma_0(4))$ to that of $\cS_{2\kappa-1}(\Gamma_0(2))$. We write $\widetilde{T}_n$ for the $n$-th Hecke operator on $\cS_{2\kappa-1}(\Gamma_0(2))$. Then for a Hecke eigenform $F$ of $\cS_{2\kappa-1}(\Gamma_0(2))$, $\widetilde{T}_nF=\lambda_F(n)n^{\kappa-1}F.$ By \cite{Kohnen1982} (or \cite[Theorem 1.1, 1.2]{Purkait2014}), the Hecke operators on the half-integral weight modular forms are multiplicative. Moreover, if $n=p^{2\ell}$, then 
\begin{equation}\label{eq. hecke action on half-integral weight}
T_{p^{2\ell}}f=\Lambda_f(p^{2\ell})f=(\lambda_F(p^{\ell})p^{(\kappa-1)\ell}-\lambda_{F}(p^{\ell-2})p^{\ell(\kappa-1)-1})f.
\end{equation}

 \subsection{The Kernel Function}\label{subsec. kernel function}

The goal of this section is to set up the kernel function that we will examine. In particular, we shall prove an analog of \cite[Theorem 1]{Zagier1977} for half-integers. We begin by setting some notation. For $\gamma\in M_4^+,$ we define the functions
\begin{equation}\label{eq. R function}
R_{\gamma}(z,z')=j_{\gamma}(z)^{2\kappa}(\gamma z+z')^{\kappa},
\end{equation}
and
\begin{equation}
	h_n(z,z')=\sum_{\gamma\in G_4(n)}\frac{1}{R_{\gamma}(z,z')}.
\end{equation}
Notice that, when $n=1,$ $G_4(1)=\Gamma_0(4).$ 
We also set
\begin{equation}\label{eq. kappa constant}
C_{\kappa}=\frac{(-i)^{\kappa}\pi}{2^{\kappa-3}(\kappa-1)}.
\end{equation}

\begin{prop}\label{prop. kernel function}
	Let $H_{\kappa}(4)$ be an orthogonal basis for $\cS_{\kappa}(\Gamma_0(4))$. Let $n\geq1$ be an odd square integer. Then for $z,z'\in\bH$
	\begin{equation}\label{eq. general kernel function}
	\sum_{f\in H_{\kappa}(4)}\frac{(T_nf)(z)\overline{f(z')}}{\|f\|^2}=\frac{n^{\kappa-1}}{C_{\kappa}}h_{n}(z,-\overline{z'}).
	\end{equation}
    In particular, if we set $z=iy_1$ and $z'=iy_2:$
    \begin{equation}\label{eq. kernel function}
	\sum_{f\in H_{\kappa}(4)}\frac{(T_nf)(iy_1)\overline{f(iy_2)}}{\|f\|^2}=\frac{n^{\kappa-1}}{C_{\kappa}}h_{n}(iy_1,iy_2).
	\end{equation}
\end{prop}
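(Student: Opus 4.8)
The plan is to prove Proposition \ref{prop. kernel function} by imitating Zagier's argument for the reproducing kernel of $\cS_{\kappa}(\SL_2(\bZ))$ \cite{Zagier1977}, adapted to the half-integral weight setting on $\Gamma_0(4)$. First I would consider the Poincar\'e-type series $h_1(z,-\overline{z'}) = \sum_{\gamma\in\Gamma_0(4)} R_\gamma(z,-\overline{z'})^{-1}$, where $R_\gamma(z,w) = j_\gamma(z)^{2\kappa}(\gamma z + w)^\kappa$. The key point is that for fixed $z'\in\bH$, the function $z\mapsto h_1(z,-\overline{z'})$ lies in $\cS_\kappa(\Gamma_0(4))$: convergence for $\kappa>4$ (this is the role of the weight hypothesis, exactly as in Zagier where $\kappa>2$ on $\SL_2(\bZ)$) follows from the standard majorization of the series $\sum_{\gamma}|j(\gamma,z)|^{-\kappa}|\gamma z+w|^{-\kappa}$, and the transformation law \eqref{eq. automorphy of half-integral weight form} in the variable $z$ follows from the cocycle condition \eqref{eq. cocyle condition} for $j_\gamma$ together with the identity $\gamma'\cdot(\gamma z) + w$ relating to $\gamma z+w$ under $z\mapsto \delta z$ for $\delta\in\Gamma_0(4)$ — here one uses $j_\delta(z)^{2\kappa}(\delta z + (-\overline{z'}))^{\kappa}$ bookkeeping; holomorphicity and cuspidality at the cusps of $\Gamma_0(4)$ follow from the decay of the summands.

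Next, I would establish the reproducing property: for every $g\in\cS_\kappa(\Gamma_0(4))$,
\[
\langle g, \overline{C_\kappa^{-1}} \, h_1(\cdot,-\overline{z'})\rangle = g(z'),
\]
up to the explicit constant $C_\kappa$ defined in \eqref{eq. kappa constant}. This is the computational heart of the argument: one unfolds the Petersson integral $\int_{\Gamma_0(4)\backslash\bH} g(z)\overline{h_1(z,-\overline{z'})}\,y^\kappa\,d\mu(z)$ against the sum over $\gamma\in\Gamma_0(4)$, using the automorphy of $g$ to collapse the sum and extend the domain of integration to all of $\bH$, reducing to a single integral of the shape $\int_\bH g(z)\,\overline{(\gamma z - \overline{z'})^{\kappa}}\cdots$; after a change of variables this becomes a classical beta-type integral $\int_\bH g(z)(z-\overline{z'})^{-\kappa} y^\kappa\,d\mu(z)$ which evaluates (by Cauchy's theorem applied to the holomorphic $g$, or by direct computation in the disk model) to a constant multiple of $g(z')$, the constant being precisely $C_\kappa$. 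The factor $j_\gamma(z)^{2\kappa}\overline{j_\gamma(z)}^{-2\kappa} = |j_\gamma(z)|^{-4\kappa}|j(\gamma,z)|^{2\kappa}=\cdots$ must be tracked carefully; the relation $|j_\gamma(z)|^2 = |j(\gamma,z)|$ stated in the notation section is what makes the theta-multiplier contributions combine cleanly with the $y^\kappa$ weight.

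With the reproducing property for $n=1$ in hand, the general case follows by applying the Hecke operator $T_n$ in the $z$-variable to the $n=1$ identity. Concretely, expanding $h_1(z,-\overline{z'}) = C_\kappa \sum_{f\in H_\kappa(4)} \frac{f(z)\overline{f(z')}}{\|f\|^2}$ (which is just the reproducing kernel written in the orthogonal basis) and then acting by $T_n$ on $z$ gives $\sum_f \frac{(T_nf)(z)\overline{f(z')}}{\|f\|^2}$ on the spectral side; on the geometric side one uses the coset decomposition \eqref{equ:coset_decomp_for_Hecke} from Lemma \ref{lem. total set of the Hecke operator}, $G_4(n) = \bigsqcup_j \Gamma_0(4)\gamma_j$, together with the cocycle relation to identify $n^{\kappa/2}\sum_j j_{\gamma_j}(z)^{-2\kappa} h_1(\gamma_j z, -\overline{z'})$ with $n^{\kappa/2}\sum_{\gamma\in G_4(n)} R_\gamma(z,-\overline{z'})^{-1} = n^{\kappa/2} h_n(z,-\overline{z'})$; combining with the normalization $T_nf = \Lambda_f(n) n^{(\kappa-1)/2} f$ and the power of $n$ bookkeeping yields the stated factor $n^{\kappa-1}/C_\kappa$. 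Setting $z=iy_1$, $z'=iy_2$ (so that $-\overline{z'} = iy_2$) gives \eqref{eq. kernel function} immediately.

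The main obstacle I anticipate is the careful evaluation of the constant $C_\kappa$ and, relatedly, the consistent handling of the half-integral weight multiplier system: the fractional powers $(\gamma z + w)^\kappa$ and $j_\gamma(z)^{2\kappa}$ force one to fix branches of the logarithm (as set up in \S2.1) and to verify that all the branch choices are compatible under the $\Gamma_0(4)$-action, using Lemma \ref{lemma. complex power st} to control the cocycle $e^{\pm 2\kappa\pi i}$ discrepancies. This is exactly the place where Zagier's integral-weight computation must be genuinely modified rather than merely transcribed, and it is also where the restriction that $n$ be an odd \emph{square} integer enters, since otherwise $T_n$ vanishes and the Hecke coset manipulation degenerates.
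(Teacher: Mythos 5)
Your overall skeleton agrees with the paper's: build the Poincar\'e-type series $h_1(z,-\overline{z'})$, verify it lies in $\cS_\kappa(\Gamma_0(4))$, prove the reproducing property by unfolding the Petersson integral, and then handle general $n$ by acting with $T_n$ and using the coset decomposition of $G_4(n)$. The gap is at the one step that actually carries the half-integral-weight content: after unfolding, you reduce to an integral of the shape $\int_{-\infty}^{\infty} f(x+iy)(x-iy-z')^{-\kappa}\,dx$ and propose to evaluate it ``by Cauchy's theorem applied to the holomorphic $g$, or by direct computation in the disk model.'' The paper explicitly flags that Cauchy's theorem is precisely what \emph{cannot} be used here: for half-integral $\kappa$ the factor $(x-iy-z')^{-\kappa}$ has a branch point (not a pole) at $x = iy + z'$, so the residue computation Zagier runs in the integral-weight case breaks down. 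The ``disk model'' alternative is left as a single phrase with no argument, so it does not fill the hole.

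What the paper actually does at this step is substitute the Fourier expansion $f(x+iy) = \sum_{n\ge 1} a_f(n) n^{(\kappa-1)/2} e(n(x+iy))$ and evaluate each mode by a table integral (\cite[Eq.~3.382(7)]{GradshteynRyzhik2007}), obtaining $\int_{-\infty}^{\infty} e(nx)(x-iy-z')^{-\kappa}\,dx = 2\pi i\, e(n(iy+z'))(2\pi i n)^{\kappa-1}/\Gamma(\kappa)$ with no contour deformation; summing back and doing the $y$-integral via $\Gamma$ produces the constant $C_\kappa$. Your final paragraph correctly anticipates that this is ``exactly the place where Zagier's integral-weight computation must be genuinely modified,'' but you locate the difficulty in branch-tracking of the multiplier $j_\gamma(z)^{2\kappa}$ rather than in the evaluation method of $I(y)$ itself. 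Those branch-tracking concerns are genuine but secondary here; the essential modification is replacing the contour/residue argument by the termwise Fourier evaluation. With that substitution your outline becomes the paper's proof.
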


\begin{proof}
Our proof begins like that of Zagier \cite[Theorem 1]{Zagier1977}, but diverges when we examine the integral that we denote $I(y)$ as Zagier uses Cauchy's theorem which can only be done in the integral weight case. 

Just like Zagier, it suffices to prove the statement for $n=1.$ As for larger $n$ applying the Hecke operator gives the result. Moreover, it suffices to show that for $f\in\cS_{\kappa}(\Gamma_0(4)),$ we have
\begin{equation}\label{eq. main identity to prove kernel function}
(f* h_1)(z')=\int_{\Gamma_0(4)\backslash\bH}f(z)\overline{h_1(z,-\overline{z'})}(\Im z)^{\kappa}\,d\mu(z)=\overline{C_{\kappa}}f(z').
\end{equation}
By \eqref{eq. automorphy of half-integral weight form}, we have
\[\overline{j_{\gamma}(z)}^{2\kappa}f(z)(\Im z)^{\kappa}=(\Im \gamma z)^{\kappa}f(\gamma z),\]
for $\gamma\in\Gamma_0(4).$ Summing over all matrices $\gamma\in\Gamma_0(4)$ yields
\[f(z)\overline{h_1(z,z')}(\Im z)^{\kappa}=\sum_{\gamma\in\Gamma_0(4)}(\gamma \overline{z}+\overline{z'})^{-\kappa}f(\gamma z)(\Im \gamma z)^{\kappa}.\]
We substitute this into \eqref{eq. main identity to prove kernel function} to obtain
\[(f* h_1)(z')=\int_{\Gamma_0(4)\backslash\bH}f(z)\overline{h_1(z,-\overline{z'})}(\Im z)^{\kappa}\,d\mu(z)=\int_{\Gamma_0(4)\backslash\bH}\sum_{\gamma\in\Gamma_0(4)}(\gamma\overline{z}-z')^{-\kappa}f(\gamma z)(\Im \gamma z)^{\kappa}\,d\mu(z).\]
Then unfolding the integral gives us that 
\begin{equation}\label{eq. middle step to prove the kerenl function}
(f* h_1)(z')=2\int_0^{\infty}\int_{-\infty}^{\infty}\frac{f(x+iy)y^{\kappa-1}}{(x-iy-z')^{\kappa}}\,dx\frac{\,dy}{y}=2\int_0^{\infty}y^{\kappa-1}I(y)\frac{\,dy}{y},
\end{equation}
where
\[I(y)=\int_{-\infty}^{\infty}\frac{f(x+iy)}{(x-iy-z')^{\kappa}}\,dx.\]
We insert the Fourier expansion of $f$ into the integral and obtain
\[I(y)=\sum_{n=1}^{\infty}a_f(n)n^{\frac{\kappa-1}{2}}e^{-2\pi ny}\int_{-\infty}^{\infty}\frac{e(nx)}{(x-iy-z')^{\kappa}}\,dx.\]
Now, by the change of variable $x\mapsto-x$, and multiplying the integral by $(-i)^\kappa$, the integral will be of the form \cite[Equation 3.382(7)]{GradshteynRyzhik2007} with $\nu=\kappa$, $p=2\pi n$, and $\beta=y-iz'$, so we get that 
\[\int_{-\infty}^{\infty}\frac{e(nx)}{(x-iy-z')^{\kappa}}\,dx=2\pi i\frac{e(n(iy+z'))(2\pi in)^{\kappa-1}}{\Gamma(\kappa)}.\]

\noindent Plugging this into $I(y)$ yields
\[I(y)=\frac{(2\pi i)^{\kappa}}{\Gamma(\kappa)}\sum_{n=1}^{\infty}a_f(n)n^{\frac{\kappa-1}{2}}e(nz')e^{-4\pi ny}n^{\kappa-1}.\]
Inserting this into \eqref{eq. middle step to prove the kerenl function}, and we conclude that
\begin{flalign*}
(f* h_1)(z')&=\frac{2(2\pi i)^{\kappa}}{\Gamma(\kappa)}\sum_{n=1}^{\infty}a_f(n)n^{\frac{\kappa-1}{2}}e(nz')n^{\kappa-1}\int_0^{\infty}e^{-4\pi ny}y^{\kappa-1}\frac{\,dy}{y}\\
&=\frac{2(2\pi i)^{\kappa}\Gamma(\kappa-1)}{(4\pi)^{\kappa-1}\Gamma(\kappa)}f(z')=\overline{C_{\kappa}}f(z').
\end{flalign*}
This completes the proof of \eqref{eq. main identity to prove kernel function} and hence Proposition \ref{prop. kernel function}.
\end{proof}

For $\mathbf{s}=(s_1,s_2)\in\bC^2$, we define the function $J(\mathbf{s},n)$. This is the function that we shall study to derive our weighted second moment result. In light of Proposition \ref{prop. kernel function}, we write $J(\mathbf{s},n)$ in the following two different ways:

\begin{equation}
\begin{aligned}\label{relative trace formula function}
J(\mathbf{s},n)&= \int_0^\infty\int_0^\infty\sum_{f\in H_{\kappa}(4)}\frac{(T_nf)(iy_1)\overline{f(iy_2)}}{\|f\|^2}y_1^{s_1+\kappa/2}y_2^{s_2+\kappa/2}\frac{\,dy_1}{y_1}\frac{\,dy_2}{y_2} & =: J_{\mathrm{Spec}}(\mathbf{s},n)\\
&=C_{\kappa}^{-1}n^{\kappa-1}\int_0^{\infty}\int_0^{\infty}\sum_{\gamma\in G_4(n)}\frac{y_1^{s_1+\kappa/2}y_2^{s_2+\kappa/2}}{R_{\gamma}(iy_1,iy_2)}\frac{\,dy_1}{y_1}\frac{\,dy_2}{y_2}
& =: J_{\mathrm{Geo}}(\mathbf{s},n).
\end{aligned}
\end{equation}
For accuracy, we refer to $J_{\mathrm{Spec}}(\textbf{s},n)$ as the \textit{spectral side} of the relative trace formula and $J_{\mathrm{Geo}}(\textbf{s},n)$ as the \textit{geometric side} of the relative trace formula.
\subsection{The Spectral Side}

Let $\Re(s_1),\Re(s_2) \gg 1$. Using the assumption that $f$ is an eigenfunction of the Hecke operator \eqref{eq. hecke action on half-integral weight} and  \eqref{eq. integral representation for L functions}  we get

\begin{equation}\label{eq. spectral side of RTF}
   J_{\mathrm{Spec}}(\mathbf{s},n)=n^{\frac{\kappa-1}{2}}\frac{\Gamma(s_1+\kappa/2)\Gamma(s_2+\kappa/2)}{(2\pi)^{s_1+s_2+\kappa}}\sum_{f\in H_{\kappa}(4)}\frac{\Lambda_f(n)L(1/2+s_1,f)\overline{L(1/2+\overline{s_2},f)}}{\|f\|^2},
\end{equation}
where
\begin{equation}\label{eq. explict hecke eigenvalues}
	\Lambda_f(n)=\prod_{p^{2\ell}\parallel n}(\lambda_F(p^{\ell})-p^{-1}\lambda_F(p^{\ell-2})).
\end{equation}
This is absolutely convergent when $\Re(s_1),\Re(s_2) > \frac{1}{2}$. Moreover, it admits meromorphic continuation to $\mathbb{C}^{2}$ via the analytic continuation of automorphic $L$-functions. In particular, we may take $s_1=s_2=0$ to get the weighted second moment.

\subsection{The Geometric Side}\label{sec, geometric side}
We set
\begin{align*}
B_n&=\left\{\begin{pmatrix}
    a&m\\&d
\end{pmatrix}:\mbox{$ad=n,$ $a,d\in\bZ$, $m\in \bZ$, and $\gcd(a,d,m)=1$}\right\},\\
\overline{B}_n&=B_n/{\pm I_2}=\left\{\begin{pmatrix}
    a&m\\&d
\end{pmatrix}:\mbox{$ad=n,$ $a,d>0,$ $a,d\in\bZ$, $m\in \bZ$, and $\gcd(a,d,m)=1$}\right\},\\
L_n&=\left\{\begin{pmatrix}
    a&\\m&d
\end{pmatrix}:\mbox{$ad=n$, $a,d\in\bZ$, $0\neq m\in 4\bZ$, and $\gcd(a,d,m)=1$}\right\},\\
\overline{L}_n&=L_n/\{\pm I_2\}=\left\{\begin{pmatrix}
    a&\\m&d
\end{pmatrix}:\mbox{$ad=n,$ $a,d>0,$ $a,d\in\bZ$ $0\neq m\in 4\bZ$, and $\gcd(a,d,m)=1$}\right\}.
\end{align*}

With these matrix sets, we prove the following Bruhat decomposition of $G_4(n):$

\begin{lemma}[Bruhat decomposition]\label{lem. bruhat decom}
Let $n$ be an odd square integer. The Bruhat decomposition for $G_4(n)$ is
\begin{align*}
    G_4(n)=B_n\bigsqcup L_n\bigsqcup\left(\bigsqcup_{m\neq0,-n}\bigsqcup_{\substack{ad=n+m\\ a,d\in\bZ}}\bigsqcup_{\substack{bc=m\\ b\in\bZ,c\in4\bZ\\ \gcd(a,b,c,d)=1}}\begin{pmatrix}
        a&b\\c&d
    \end{pmatrix}\right).
\end{align*}
\end{lemma}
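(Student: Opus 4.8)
The plan is to classify matrices $\gamma = \begin{psmallmatrix} a & b \\ c & d \end{psmallmatrix} \in G_4(n)$ according to the lower-left entry $c$, using the characterization $G_4(n) = G_4'(n)$ from Lemma \ref{lem. total set of the Hecke operator}, i.e., $\gamma$ runs over integral matrices with $4 \mid c$, $ad - bc = n$, and $\gcd(a,b,c,d) = 1$. There are three cases. When $c = 0$ we get upper triangular matrices with $ad = n$ and $\gcd(a,d,b) = 1$; since $n$ is odd and positive, $a,d$ are automatically odd, and modulo the sign ambiguity this is exactly $B_n$ (note the statement uses $B_n$, not $\overline{B}_n$, so both signs of $(a,d)$ are allowed). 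When $c \ne 0$ but $a = 0$: then $-bc = n$ with $4 \mid c$ and $c \ne 0$, and we need $\gcd(b,c,d) = 1$; writing the constraint in terms of $ad = n$ forces $a = 0$, so $d$ can be arbitrary subject to the gcd condition — wait, more carefully, $a = 0$ gives $-bc = n$, and after fixing the sign convention this is the set $L_n$ of lower triangular matrices $\begin{psmallmatrix} a & \\ m & d \end{psmallmatrix}$ with $ad = n$ (so here the diagonal is recovered differently); the cleanest bookkeeping is to say the lower triangular piece consists of those $\gamma$ with $b = 0$, giving $ad = n$, $4 \mid c$, $c \ne 0$, $\gcd(a,c,d) = 1$, which is precisely $L_n$.

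For the generic case $b c \ne 0$ (equivalently $c \ne 0$ and $a \ne 0$ — actually the correct dividing line is whether $\gamma$ is triangular or not, so "generic" means $b \ne 0$ and $c \ne 0$): I would perform the standard Bruhat-type parametrization. Set $m = bc$; this is a nonzero integer. From $ad - bc = n$ we get $ad = n + m$, so $a,d$ are divisors of $n+m$ of the appropriate sign (all integer factorizations $ad = n+m$, not just positive). The entry $c$ must lie in $4\bZ$ and $c \ne 0$, while $b = m/c$ must be an integer, so $c$ ranges over divisors of $m$ lying in $4\bZ$; given the factorization $bc = m$ this is exactly the inner sum over $bc = m$ with $b \in \bZ$, $c \in 4\bZ$. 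The constraint $\gcd(a,b,c,d) = 1$ is imposed directly. The only value of $m$ to exclude is $m = 0$ (triangular, already handled) and $m = -n$ (which would force $ad = 0$, i.e. $a = 0$ or $d = 0$, again a triangular/degenerate case already counted in $L_n$ or excluded). This reproduces the third term in the displayed decomposition.

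The remaining point is \emph{disjointness} and \emph{exhaustiveness}: every $\gamma \in G_4(n)$ has a well-defined $(b,c)$, hence falls into exactly one of the three families (triangular with $c = 0$, triangular with $b = 0$, or $bc \ne 0$), and conversely every matrix in each family has $4 \mid c$, determinant $n$, and — by the explicitly imposed $\gcd$ conditions — content $1$, so lies in $G_4'(n) = G_4(n)$. I expect the main obstacle to be purely bookkeeping: matching the sign conventions between $B_n, L_n$ (which allow negative diagonal entries and are taken before quotienting by $\pm I_2$) and the $G_4'(n)$ description, and verifying that the $m = -n$ exclusion exactly removes the overlap between the generic stratum and $L_n$ without losing any genuinely generic matrix. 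One should also double-check that in the generic stratum no further identification occurs — i.e. distinct triples $(m; a,d; b,c)$ give genuinely distinct matrices — which is immediate since all four entries are read off directly from the data.
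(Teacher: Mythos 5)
Your overall classification by the positions of the zero entries is exactly the paper's strategy: use $G_4(n)=G_4'(n)$, then split into $c=0$ (giving $B_n$), $b=0,c\neq 0$ (giving $L_n$), and the generic stratum. However, there is a genuine gap at the point you flag but do not resolve, namely the $m=-n$ case. You assert that a matrix with $bc=-n$, hence $ad=0$, is ``a triangular/degenerate case already counted in $L_n$ or excluded.'' Neither disjunct is true as stated: such a matrix has $b\neq 0$ and $c\neq 0$, so it is not triangular and is certainly not in $L_n$ (which requires $b=0$). If matrices with $a=0$ or $d=0$ but $b,c\neq 0$ existed in $G_4(n)$, the claimed decomposition would simply miss them, since they sit in none of the three pieces.

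What is missing is the one-line observation that the paper uses to kill this case: if $a=0$ or $d=0$ then $ad=0$, so $bc=-n$; but every $\gamma\in G_4(n)$ has $4\mid c$, hence $4\mid bc$, while $n$ is odd, so $bc=-n$ is impossible. Equivalently, every matrix in $G_4(n)$ has $ad\neq 0$, so the only permissible zero entries are $b$ and/or $c$. Once you insert this remark, the three families you describe form a genuine partition of $G_4(n)$ and your bookkeeping for the generic stratum (setting $m=bc$, $ad=n+m$, excluding $m=0$) goes through verbatim. I would also drop the phrase about the $m=-n$ exclusion ``removing the overlap between the generic stratum and $L_n$'': those two sets are already disjoint (they are distinguished by whether $b$ vanishes), so there is no overlap to remove, only the vacuity of the $m=-n$ stratum to establish.
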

\begin{proof}
    
    Recall from Lemma \ref{lem. total set of the Hecke operator} that we have that $G_4(n)=G_4(n)'$. The first component contains matrices of the form $\begin{psmallmatrix}
        *&*\\&*
    \end{psmallmatrix}$. The second component contains matrices of the form $\begin{psmallmatrix}
        *&\\ *&*
    \end{psmallmatrix},$ with bottom left corner being nonzero. The last component contains matrices with each entry being nonzero. Now observe that matrices of the form $\begin{psmallmatrix}
        *&*\\ *&
    \end{psmallmatrix}$ or $\begin{psmallmatrix}
        &*\\ *&*
    \end{psmallmatrix}$ will disappear as $(n,4)=1$. This proves the claim.
\end{proof}

We insert the Bruhat decomposition for $G_{4}(n)$ into \eqref{relative trace formula function} and by the definition of $j_{\gamma}(z)$ for $\gamma=\begin{psmallmatrix}
	a&b\\c&d
\end{psmallmatrix} \in G_{4}(n)$, we obtain the decomposition
\begin{equation}\label{eq. geometric side of RTF}
	J_{\Geo}(\textbf{s},n)=J_{\sm}(\textbf{s},n)+J_{\du}(\textbf{s},n)+J_{\Reg}(\textbf{s},n),
\end{equation}
where
\begin{equation}\label{small orbit of RTF}
   J_{\sm}(\textbf{s},n)= 2C_{\kappa}^{-1}n^{\kappa-1}\int_0^{\infty}\int_0^{\infty}\sum_{\gamma\in \overline{B}_{n}}\frac{y_1^{s_1+\kappa/2}y_2^{s_2+\kappa/2}}{R_{\gamma}(iy_1,iy_2)}\frac{\,dy_1}{y_1}\frac{\,dy_2}{y_2},
\end{equation}
\begin{equation}\label{dual orbit of RTF}
   J_{\du}(\textbf{s},n)= 2C_{\kappa}^{-1}n^{\kappa-1}\int_0^{\infty}\int_0^{\infty}\sum_{\gamma\in \overline{L}_{n}}\frac{y_1^{s_1+\kappa/2}y_2^{s_2+\kappa/2}}{R_{\gamma}(iy_1,iy_2)}\frac{\,dy_1}{y_1}\frac{\,dy_2}{y_2},
\end{equation}
and
\begin{equation}\label{regular orbit of RTF}
   J_{\Reg}(\textbf{s},n)= C_{\kappa}^{-1}n^{\kappa-1}\int_0^{\infty}\int_0^{\infty}\sum_{m\neq0,-n}\sum_{\substack{ad=n+m\\ a,d\in\bZ}}\sum_{\substack{bc=m\\ b\in\bZ,c\in4\bZ\\(a,b,c,d)=1}}\frac{y_1^{s_1+\kappa/2}y_2^{s_2+\kappa/2}}{R_{\gamma}(iy_1,iy_2)}\frac{\,dy_1}{y_1}\frac{\,dy_2}{y_2}.
\end{equation}
\begin{remark}
    Finding the region of absolute convergence for $J_{\Geo}(\textbf{s},n)$ is commonly referred to as the “regularization’’ process, which is a central ingredient of the relative trace formula, see \cite{Yan23a}. In the holomorphic case, however, the regularization is simpler, and $J_{\Geo}(\textbf{s},n)$ admits an analytic continuation to a neighborhood of $\textbf{s}=\textbf{0}.$ For further details, see \cite[Section 5.1]{Wei2025}.  
\end{remark}
We will call $J_{\sm}(\textbf{s},n)$ the \textit{small cell orbital integrals}, $J_{\du}(\textbf{s},n)$ the \textit{dual orbital integrals}, and $J_{\Reg}(\textbf{s},n)$ the \textit{regular orbital integral}. We also define
\[J_{\si}(\textbf{s},n)=J_{\sm}(\textbf{s},n)+J_{\du}(\textbf{s},n),\]
which is the \textit{singular orbital integrals}. Combining this decomposition with \eqref{relative trace formula function}, \eqref{eq. spectral side of RTF}, and \eqref{eq. geometric side of RTF}, and we conclude that  for $\mathbf{s}\in\bC^2$ with $\Re(s_1),\Re(s_2) \gg 1$, we have
\begin{equation}\label{eq. full RTF}
	n^{\frac{\kappa-1}{2}}\frac{\Gamma(s_1+\kappa/2)\Gamma(s_2+\kappa/2)}{(2\pi)^{s_1+s_2+\kappa}}\sum_{f\in H_{\kappa}(4)}\frac{\Lambda_f(n)L(1/2+s_1,f)\overline{L(1/2+\overline{s_2},f)}}{\|f\|^2}=J_{\si}(\mathbf{s},n)+J_{\Reg}(\mathbf{s},n).
\end{equation}
\section{The Singular Orbital Integral $J_{\si}(\textbf{s},n)$}\label{sec. singular}

In this section, we always assume that $n$ is an odd square integer.

\subsection{The Small Cell Orbital Integral $J_{\sm}(\textbf{s},n)$}\label{subsec. small cell}
Before the calculation of the singular orbital integrals, we introduce several useful lemmas. The first lemma is a generalization of the Lipschitz summation formula:

\begin{lemma}\label{lem. possion type lemma}
	Let $z\in\bH$ and $\kappa > 1$. Then for any positive integers $r$ and $s$, we have
	\[\sum_{\substack{m\in\bZ\\ m\equiv r\Mod{s}}}\frac{1}{(z+m)^{\kappa}}=\frac{(-2\pi i)^{\kappa}}{\Gamma(\kappa)s^{\kappa}}\sum_{m=1}^{\infty}m^{\kappa-1}e\left(\frac{rm}{s}\right)e\left(\frac{mz}{s}\right).\]
\end{lemma}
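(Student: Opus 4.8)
The plan is to reduce the claimed identity to the classical Lipschitz summation formula
\[
\sum_{m\in\bZ}\frac{1}{(w+m)^{\kappa}}=\frac{(-2\pi i)^{\kappa}}{\Gamma(\kappa)}\sum_{m=1}^{\infty}m^{\kappa-1}e(mw),\qquad w\in\bH,\ \kappa>1,
\]
which is standard (it follows, e.g., from the partial-fraction expansion of the cotangent together with repeated differentiation, or from Poisson summation applied to $m\mapsto (w+m)^{-\kappa}$). Here all powers are taken with the principal branch, which is legitimate since $\Im(w+m)>0$ for every $m$, so $w+m$ avoids the cut $(-\infty,0]$; the same remark applies throughout with $w=z/s$.

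First I would split the left-hand sum according to the residue class: writing $m=r+sk$ with $k\in\bZ$,
\[
\sum_{\substack{m\in\bZ\\ m\equiv r\Mod{s}}}\frac{1}{(z+m)^{\kappa}}
=\sum_{k\in\bZ}\frac{1}{(z+r+sk)^{\kappa}}
=\frac{1}{s^{\kappa}}\sum_{k\in\bZ}\frac{1}{\left(\frac{z+r}{s}+k\right)^{\kappa}},
\]
where pulling the $s^{\kappa}$ out of the power is valid because $s>0$ is a positive real and $\left(s\cdot\zeta\right)^{\kappa}=s^{\kappa}\zeta^{\kappa}$ when $s>0$ and $\zeta\notin(-\infty,0]$ (no branch discrepancy arises, cf. Lemma \ref{lemma. complex power st} with one factor real positive). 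Now set $w=\dfrac{z+r}{s}$; since $z\in\bH$ and $s>0$ we have $w\in\bH$, so the classical Lipschitz formula applies and gives
\[
\frac{1}{s^{\kappa}}\sum_{k\in\bZ}\frac{1}{(w+k)^{\kappa}}
=\frac{(-2\pi i)^{\kappa}}{\Gamma(\kappa)s^{\kappa}}\sum_{m=1}^{\infty}m^{\kappa-1}e(mw).
\]

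Finally I would unfold $e(mw)=e\!\left(\dfrac{m(z+r)}{s}\right)=e\!\left(\dfrac{rm}{s}\right)e\!\left(\dfrac{mz}{s}\right)$, which is the desired right-hand side. The only points needing care are the branch-of-power bookkeeping in the two rescaling steps (handled by the observations above, since $s$ is a positive real), and the absolute convergence of all the series, which holds for $\kappa>1$ uniformly on compact subsets of $\bH$ and justifies the rearrangement $m\mapsto r+sk$; neither is a genuine obstacle. If one prefers a self-contained argument avoiding citation of the classical formula, the same computation can be run directly from Poisson summation: the Fourier transform of $t\mapsto (w+t)^{-\kappa}$ on $\bR$ is, up to the constant $\frac{(-2\pi i)^{\kappa}}{\Gamma(\kappa)}$, equal to $\xi^{\kappa-1}e(w\xi)\mathbf{1}_{\xi>0}$ (this is again \cite[Equation 3.382(7)]{GradshteynRyzhik2007}, already invoked in the proof of Proposition \ref{prop. kernel function}), and summing over the dual lattice $\frac{1}{s}\bZ$ after the substitution produces exactly the stated right-hand side.
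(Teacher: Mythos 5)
Your proof is correct and follows essentially the same route as the paper: both reindex $m=r+sk$, pull out $s^{\kappa}$ (legitimate since $s>0$ is real), and then invoke the classical Lipschitz summation formula applied to $w=(z+r)/s\in\bH$. The extra remarks on branch bookkeeping and the Poisson-summation alternative are sound but not needed beyond what the paper's one-line reduction already does.
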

\begin{proof}
    We rewrite the left hand side as
	\[\sum_{\substack{m\in\bZ\\ m\equiv r\Mod{s}}}\frac{1}{(z+m)^{\kappa}}=\sum_{m\in\bZ}\frac{1}{(z+r+ms)^{\kappa}}=\frac{1}{s^{\kappa}}\sum_{m\in\bZ}\frac{1}{\left(\frac{z+r}{s}+m\right)^{\kappa}}.\]
    Applying the Lipschitz summation formula (e.g., \cite[Equation (7)]{RamakrishnanShankhadhar2014}) completes the proof. 
\end{proof}

Recall from \eqref{prop:t_gamma_for_any_matrix} that for any $\gamma=\begin{psmallmatrix}
	a&b\\c&d
\end{psmallmatrix}\in M_4^+,$ there exists a complex number $t_{\gamma}$ satisfying $j_{\gamma}(z)=t_{\gamma}\sqrt{cz+d}$ and such that $|t_{\gamma}| = 1$. The following lemma gives an explicit formula for $t_{\gamma}$ when $\gamma = \gamma_{j}$:
\begin{lemma}\label{lem. t value in the small orbital integral}
	For $\gamma=\gamma^{a,d,m}=\begin{psmallmatrix}
		a&m\\&d
	\end{psmallmatrix}$ satisfying $a,d>0,$ $ad=n$, and $\gcd(a,d,m)=1$, one has
	\[t^{a,d,m}:=t_{\gamma}=\varepsilon_g^{-1}\legendre{-m}{g},\]
	where $g=\gcd(a,d)$ from \S\ref{subsubsec.upper and lower triangualr matrices}.
\end{lemma}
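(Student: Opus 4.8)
The strategy is to compute $t_\gamma$ for $\gamma = \gamma^{a,d,m}$ by factoring this matrix into pieces whose $j$-functions we already understand, and then using the cocycle condition \eqref{eq. cocyle condition} together with the modulus condition \eqref{eq. modulus condition} to extract the scalar. Since $\gcd(a,d,m) = 1$ and $g = \gcd(a,d)$, we have $\gcd(g, m) = 1$. The natural first move is to write $\gamma^{a,d,m}$ as a product $\sigma \cdot \delta \cdot \tau$, where $\tau \in \Gamma_0(4)$ is chosen so that $\tau$ "absorbs" the off-diagonal entry $m$, $\delta$ is a diagonal matrix $\diag(a/?, d/?)$ or similar, and $\sigma \in \Gamma_0(4)$; because the $j$-function on $\Gamma_0(4)$ is given explicitly by \eqref{eq. compatibility condition}, the cocycle relation then reduces the computation of $t^{a,d,m}$ to a Jacobi-symbol bookkeeping problem. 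Concretely, one can look for $x, y$ with $dx \equiv$ something mod the relevant modulus so that $\gamma^{a,d,m} \begin{psmallmatrix} 1 & 0 \\ * & 1\end{psmallmatrix}$ becomes upper triangular with a cleaner corner, or alternatively factor through the decomposition from Lemma \ref{lem. decom lem in linear algebra} — that lemma already writes $\gamma^{a,d,m} = \gamma_1 \begin{psmallmatrix} 1 & 0 \\ 0 & ad\end{psmallmatrix} \gamma_2$ with explicit $\gamma_1, \gamma_2 \in \Gamma_0(4)$ given in \eqref{eq. explicit smith normal form}, so in principle $t^{a,d,m}$ equals $j_{\gamma_1}(\cdot) j_{\gamma_2}(\cdot)$ divided by $\sqrt{d z + \text{(lower entry)}}$ evaluated appropriately, and everything is a product of Jacobi symbols and $\varepsilon$-factors.

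The key steps, in order, are: (1) record that $\gcd(g,m)=1$ and set up the factorization of $\gamma^{a,d,m}$ into $\Gamma_0(4)$-matrices times a diagonal matrix, either via \eqref{eq. explicit smith normal form} or via an ad hoc row/column operation; (2) apply the cocycle condition \eqref{eq. cocyle condition} to split $j_{\gamma^{a,d,m}}$ as a product, using \eqref{eq. compatibility condition} for the two $\Gamma_0(4)$-factors and the fact that the scalar part of $j$ for a diagonal matrix $\diag(1,ad)$ with $ad$ an odd square is trivial (since $ad = n = \square$, so $\sqrt{ad}$ is rational and the theta-multiplier contributes no extra root of unity — this is where oddness and squareness of $n$ enter); (3) collect the resulting Jacobi symbols and $\varepsilon_d$-type factors, simplify using quadratic reciprocity and the stipulations $\tlegendre{c}{d} = \tlegendre{-c}{-d}$, $\tlegendre{0}{d}=1$, and check that all dependence on $z$ cancels (consistent with $t_\gamma$ being independent of $z$ by Proposition \ref{prop:t_gamma_for_any_matrix}); (4) verify the answer reduces to $\varepsilon_g^{-1} \legendre{-m}{g}$, in particular checking the edge case $g = 1$ where the formula must give $t^{a,d,m} = 1$ (which it does since $\varepsilon_1 = 1$ and $\legendre{-m}{1} = 1$).

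The main obstacle I expect is step (3): carefully tracking the Jacobi symbols and the $\varepsilon$-factors through the factorization without sign or root-of-unity errors, since the theta-multiplier conventions (the branch of the square root, the $\tlegendre{c}{d}=\tlegendre{-c}{-d}$ normalization, the $j_\gamma = j_{-\gamma}$ stipulation for $d<0$) interact subtly, and one must also handle the possibility that intermediate matrices have negative lower-right entries. A secondary subtlety is justifying that the diagonal factor $\diag(1,ad)$ contributes exactly the trivial scalar to $t$; this should follow because $ad$ is a perfect square, so after pulling out $\sqrt{ad} \in \bZ_{>0}$ the remaining multiplier is that of the identity, but one should confirm this is consistent with the normalization in Proposition \ref{prop:t_gamma_for_any_matrix} (recall the remark that our $j_\gamma$ differs from Shimura's by $\det(\gamma)^{1/4}$, so the $\det^{1/4} = (ad)^{1/4}$ normalization is precisely what makes the scalar trivial here). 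Once these roots of unity are pinned down, the identity $t^{a,d,m} = \varepsilon_g^{-1}\legendre{-m}{g}$ falls out of elementary manipulation of Jacobi symbols.
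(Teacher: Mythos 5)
Your strategy matches the paper's exactly: use the explicit Smith normal form from Lemma \ref{lem. decom lem in linear algebra} to write $\gamma^{a,d,m}=\gamma_1\diag(1,ad)\gamma_2$ with $\gamma_1,\gamma_2\in\Gamma_0(4)$, apply the cocycle condition \eqref{eq. cocyle condition} and the compatibility condition \eqref{eq. compatibility condition}, and reduce to Jacobi-symbol bookkeeping that is simplified by the hypothesis $ad=n=\square$. You have also correctly flagged the two delicate points (the scalar contribution of the diagonal factor and the branch/sign issues in combining square roots). However, what you have written is a plan, not a proof: the Jacobi-symbol computation that you defer to ``step (3)'' is essentially the entire content of the paper's argument, and nothing in your proposal demonstrates that it ``falls out.''

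Concretely, the following steps are missing. First, one should reduce to $0\leq m<d$ by absorbing upper-unipotent factors via the cocycle, so that the explicit $\gamma_1,\gamma_2$ from \eqref{eq. explicit smith normal form} apply directly. Second, one obtains the raw expression $t^{a,d,m}=\varepsilon_x^{-1}\varepsilon_s^{-1}\legendre{-4dr}{x}\legendre{4r}{s}$ involving the auxiliary integers $r,s,x,y$ from the B\'ezout relations; eliminating these auxiliaries is nontrivial. One needs $x\equiv1\tmod{4}$ (from Lemma \ref{lem. decom lem in linear algebra}) to absorb $\varepsilon_x^{-1}\legendre{-1}{x}$; then the determinant relation $axs-4rg_1=1$ forces $as\equiv1\tmod4$ and hence $\varepsilon_s=\varepsilon_a=\varepsilon_d$; next one must show $\legendre{4r}{xs}=\legendre{-1}{d}\legendre{g_1}{a}$ and $\legendre{d}{x}=\legendre{m/g_1}{p_1\cdots p_s}$ (using $mx+dy=g_1$ and $p_1\cdots p_s\mid\frac{d}{g_1}$). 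Finally, the crucial use of $ad=n$ being an odd square is the observation that $\ord_p(a),\ord_p(d),\ord_p(g)$ all have the same parity, which lets one replace $\legendre{\cdot}{d}$ and $\legendre{\cdot}{a}$ by $\legendre{\cdot}{g}$ and $\varepsilon_d$ by $\varepsilon_g$. None of this is in your proposal; you assert it should work out ``by elementary manipulation,'' but the specific identities above are exactly where a sign or root-of-unity error would hide, and you acknowledge yourself that this is where the difficulty lies. So the gap is not a wrong idea but the absence of the entire computational core of the proof.
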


\begin{proof}
	Notice
	\[\begin{pmatrix}
		a&m+\ell d\\&d
	\end{pmatrix}=\begin{pmatrix}
		1&\ell \\&1
	\end{pmatrix}\begin{pmatrix}
		a&m\\&d
	\end{pmatrix}.\]
	So by the cocycle condition, we can reduce the upper right entry modulo $d$ without changing the value of $t^{a,d,m}$. Thus, without loss of generality, it suffices to prove the theorem when $0\leq m<d$. Now we shall apply Lemma \ref{lem. decom lem in linear algebra} where we will let $g_1,r,s,x,y$ define $\gamma_1$ and $\gamma_2$ as in \eqref{eq. explicit smith normal form} and \S \ref{subsubsec.upper and lower triangualr matrices}; in particular, $g_1 = \gcd(m,d)$. From the proof of Lemma \ref{lem. decom lem in linear algebra}, we may further assume $x$ is a prime such that $x\equiv 1\Mod{4}$. By the cocycle condition, we have
	\[t^{a,d,m}=\varepsilon_x^{-1}\varepsilon_s^{-1}\legendre{-4dr}{x}\legendre{4r}{s}=\varepsilon_s^{-1}\legendre{4r}{xs}\legendre{d}{x}.\]

    The determinant of $\gamma_2$ from \eqref{eq. explicit smith normal form} implies $axs-4rg_1=1$. In particular, $as\equiv 1\Mod{4}$ and so $\varepsilon_s=\varepsilon_a$. As $ad=n$ is a square it further follows  that $a\equiv d\Mod{4}$ and thus $\varepsilon_s=\varepsilon_a=\varepsilon_d$.

    Since $ad=n$ is an odd square, we will have that $a\equiv d\Mod{4}$ and so $\legendre{-1}{d}=\legendre{-1}{a}$. Upon writing $4rg_1 = -1+axs$, we see that $axs\equiv 1\Mod{4}$, thus $\legendre{-1}{axs}=1$, and in particular $\legendre{-1}{a}=\legendre{-1}{xs}$. Putting this together gives $\legendre{-1}{d}=\legendre{-1}{a}=\legendre{-1}{xs}=\legendre{-1+axs}{xs}=\legendre{4rg_1}{xs}$. This implies $\legendre{4r}{xs}=\legendre{-1}{d}\legendre{g_1}{xs}$. A similar argument will show $\legendre{g_1}{xs}=\legendre{g_1}{a}$. Then
	\[\legendre{4r}{xs}=\legendre{-1}{d}\legendre{g_1}{xs}=\legendre{-1}{d}\legendre{g_1}{a}.\]
	  As  $x\equiv1\Mod{4},$ we have $\legendre{d}{x}=\legendre{x}{d}.$ Setting $d=d_*(d')^2=p_1\cdots p_s(d')^2$, we can rewrite this identity in the form $\legendre{d}{x}=\legendre{x}{p_1\cdots p_s}$. Recalling that $y$ was chosen so that $\frac{m}{g_1}x+\frac{d}{g_1}y=1$, we have
	\[\legendre{x}{p_1\cdots p_s}\legendre{m/g_1}{p_1\cdots p_s}=\legendre{1-yd/g_1}{p_1\cdots p_s}.\]
	Now as $ad=n$ is an odd square, we must have $p_1\cdots p_s\mid a$. Because $\gcd(a,d,m)=1$, we also have $\ord_{p_1}(g_1)=\cdots\ord_{p_s}(g_1)=0.$ In other words, $p_1\cdots p_s|\frac{d}{g_1}$ which means $\legendre{1-yd/g_1}{p_1\cdots p_s}=1$ and we conclude that
    \[
    \legendre{d}{x}=\legendre{m/g_1}{p_1\cdots p_2}.
    \]
Therefore
	\[t^{a,d,m}=\varepsilon_s^{-1}\legendre{4r}{xs}\legendre{d}{x}=\varepsilon_d^{-1}\legendre{-1}{d}\legendre{g_1}{a}\legendre{m/g_1}{p_1\cdots p_s}=\varepsilon_d^{-1}\legendre{g_1}{a}\legendre{-m/g_1}{p_1\cdots p_s}.\]
	Again recalling that $ad=n$ is a square number, $\ord_p(g)$ is odd if and only if $\ord_p(d)$ is odd if and only if $\ord_p(a)$ is odd. This implies the relations $\varepsilon_d^{-1}=\varepsilon_g^{-1},$ $\legendre{g_1}{a}=\legendre{g_1}{g}$, and $\legendre{-m/g_1}{d}=\legendre{-m/g_1}{g}$. Together, these complete the proof.
	\end{proof}

We now investigate the small cell orbital integral \eqref{small orbit of RTF}. By the definition of $\overline{B}_n,$ we obtain: 
\[ J_{\sm}(\textbf{s},n)= 2C_{\kappa}^{-1}n^{\kappa-1}\int_0^{\infty}\int_0^{\infty}\sum_{\substack{ad=n\\a,d>0}}\sum_{\substack{m\in\bZ\\ \gcd(a,d,m)=1}}\frac{y_1^{s_1+\kappa/2}y_2^{s_2+\kappa/2}(t^{a,d,m})^{-2\kappa}}{(iay_1+idy_2+m)^{\kappa}}\frac{\,dy_1}{y_1}\frac{\,dy_2}{y_2},\]
where $t^{a,d,m}$ is defined as in Lemma \ref{lem. t value in the small orbital integral}. 

\bigskip

The main result of \S \ref{subsec. small cell} is:
\begin{prop}\label{prop. main term 1}
	 Let $n$ be an odd square integer. The small cell orbital integral $J_{\sm}(\textbf{s},n)$ is absolutely convergent in the region
    \[
    \left\{\mathbf{s} = (s_{1},s_{2}) \in \mathbb{C}^{2}:\Re(s_1+s_2)>0, \Re(s_1) >-\frac{\kappa}{2}, \Re(s_2)>-\frac{\kappa}{2}\right\}.
    \]
Moreover, it admits a meromorphic continuation to $\mathbb{C}^2$ given by 
\begin{equation}\label{main term 1}
    \begin{aligned}
	J_{\sm}(\textbf{s},n)&=\frac{2n^{\frac{\kappa}{2}-1}}{i^{\kappa}C_{\kappa}}\frac{\Gamma\left(s_1+\frac{\kappa}{2}\right)\Gamma\left(s_2+\frac{\kappa}{2}\right)}{(2\pi)^{s_1+s_2}\Gamma(\kappa)} \\
    &\phantom{=}\times \sum_{\substack{g^2ad=n\\\gcd(a,d)=1}}L(1+s_1+s_2,\chi_{g_*})g_*^{\frac{1}{2}+s_1+s_2}\frac{\varepsilon_g^{2\kappa+1}\legendre{-1}{g}}{a^{s_1}d^{s_2}g^{s_1+s_2}}\prod_{p|g_0}\left(1-\chi_{g_*}(p)p^{s_1+s_2}\right).
    \end{aligned}
\end{equation}
\end{prop}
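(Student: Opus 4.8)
The plan is to compute the double integral in \eqref{small orbit of RTF} directly by first performing the sum over $m$ using the generalized Lipschitz formula of Lemma \ref{lem. possion type lemma}, then carrying out the two $y$-integrals as Mellin/Gamma integrals, and finally identifying the resulting arithmetic sum with the right-hand side of \eqref{main term 1} by reorganizing the $(a,d,m)$-sum according to the common divisor structure and invoking Lemma \ref{lem. t value in the small orbital integral}. First I would fix a pair $(a,d)$ with $ad = n$, $a,d>0$, and split the inner sum over $m \in \bZ$ with $\gcd(a,d,m)=1$ according to $g_1 = \gcd(m,d)$ (equivalently, group the residues $m \Mod d$ into arithmetic progressions). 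Since $t^{a,d,m} = \varepsilon_g^{-1}\legendre{-m}{g}$ depends on $m$ only through a Jacobi symbol mod $g = \gcd(a,d)$, the factor $(t^{a,d,m})^{-2\kappa} = \varepsilon_g^{2\kappa}\legendre{-m}{g}^{2\kappa} = \varepsilon_g^{2\kappa}\legendre{-m}{g}$ (the last step because the symbol is $\pm 1$ and $2\kappa$ is odd), so the $m$-sum becomes a sum of $\legendre{-m}{g}$ against $(iay_1 + idy_2 + m)^{-\kappa}$ over residue classes, which is exactly the shape to which Lemma \ref{lem. possion type lemma} applies after a further decomposition into classes mod $g$.

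The key steps, in order: (1) For fixed $(a,d)$, apply Lemma \ref{lem. possion type lemma} with $z = i(ay_1+dy_2)$ and modulus controlled by $g$ (and the coprimality condition), producing a sum $\sum_{k\geq 1} k^{\kappa-1} c(k) e(ik(y_1 a + y_2 d)/\cdot)$ where $c(k)$ is a Gauss-sum-type coefficient in the character $\legendre{\cdot}{g}$; evaluating that Gauss sum introduces the primitive character $\chi_{g_*}$ and the local correction factors $\prod_{p \mid g_0}(1 - \chi_{g_*}(p)p^{s_1+s_2})$ together with $g_*^{1/2}$ (this is where the conductor/squarefree-part bookkeeping from \S\ref{subsubsec.upper and lower triangualr matrices} enters). (2) Interchange sum and integral (justified in the stated region $\Re(s_1+s_2)>0$, $\Re(s_j) > -\kappa/2$, where the exponential decay $e^{-k(ay_1+dy_2)}$ guarantees absolute convergence) and evaluate $\int_0^\infty e^{-c y_1} y_1^{s_1+\kappa/2}\,dy_1/y_1 = \Gamma(s_1+\kappa/2)c^{-s_1-\kappa/2}$ and similarly in $y_2$; this yields the $\Gamma(s_1+\kappa/2)\Gamma(s_2+\kappa/2)/(2\pi)^{s_1+s_2}\Gamma(\kappa)$ prefactor and leaves a Dirichlet series $\sum_k \chi_{g_*}(k) k^{-1-s_1-s_2}$, i.e.\ $L(1+s_1+s_2,\chi_{g_*})$, along with powers $a^{-s_1}d^{-s_2}g^{-s_1-s_2}$ from the factors of $a$, $d$, $g$ pulled out of $c^{-s_j-\kappa/2}$. (3) Collect the $(a,d,g)$-sum: writing $n = g^2 a d$ with $\gcd(a,d)=1$ (the reparametrization forced by $\ord_p(g)$ odd $\iff$ $\ord_p(a),\ord_p(d)$ odd, as in the proof of Lemma \ref{lem. t value in the small orbital integral}), and tracking the sign $\varepsilon_g^{2\kappa}\cdot\varepsilon_g^{-1} = \varepsilon_g^{2\kappa+1}$ times $\legendre{-1}{g}$, gives precisely \eqref{main term 1}. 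Finally, meromorphic continuation to $\bC^2$ is inherited from that of $L(1+s_1+s_2,\chi_{g_*})$ (a single simple pole at $s_1+s_2 = 0$ only when $g_* = 1$), and the claimed region of absolute convergence is read off from the $e^{-k(ay_1+dy_2)}$ decay plus convergence of the Dirichlet series and the Gamma integrals.

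The main obstacle I expect is step (1): correctly evaluating the Gauss sum / character sum that arises when one applies the Lipschitz formula modulo $g$ and then imposes the coprimality constraint $\gcd(a,d,m)=1$, so as to produce exactly the primitive character $\chi_{g_*}$ and the correction product $\prod_{p\mid g_0}(1-\chi_{g_*}(p)p^{s_1+s_2})$ with the right power of $g_*$. This is a standard but delicate computation: one must separate the imprimitivity of $\legendre{\cdot}{g}$ (conductor $g_*$, with $g = g_*(g')^2$ and $g_* , g'$ possibly sharing primes) from the coprimality sieve, and the factor $g_*^{1/2 + s_1+s_2}$ has to emerge from the Gauss sum of the primitive character. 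A secondary technical point is justifying all the interchanges of summation and integration and the claimed analytic continuation uniformly in the parameters, but given the exponential decay this is routine. Everything else — the Mellin integrals, the sign tracking via $\varepsilon_d = \varepsilon_a = \varepsilon_g$, and the final reindexing — follows the template already used in Lemma \ref{lem. t value in the small orbital integral} and in \cite{Zagier1977}.
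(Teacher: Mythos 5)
Your proposal follows essentially the same route as the paper's proof: reparametrize by $g=\gcd(a,d)$, use Lemma~\ref{lem. t value in the small orbital integral} to pull out $\varepsilon_g^{2\kappa}\legendre{-m}{g}$, decompose the $m$-sum into residue classes mod $g$ and apply the Lipschitz formula of Lemma~\ref{lem. possion type lemma}, perform the Mellin integrals to produce the Gamma-factors, and then evaluate the resulting character sum $c_{\chi_g}(m)$ (the paper cites \cite[Lemma~4.11]{BettinBoberBookerConreyLeeMolteniOliverPlattSteiner2018} plus the Gauss sum $c_{\chi_{g_*}}(m)=\chi_{g_*}(m)\varepsilon_{g_*}\sqrt{g_*}$) to obtain $L(1+s_1+s_2,\chi_{g_*})$, the $g_*^{1/2+s_1+s_2}$ factor, and the Euler product over $p\mid g_0$. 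The one small bookkeeping slip is that the extra sign multiplying $\varepsilon_g^{2\kappa}$ is $\varepsilon_{g_*}$ from the Gauss sum, which equals $\varepsilon_g$ (since $g\equiv g_*\pmod 4$), not $\varepsilon_g^{-1}$; this is what gives $\varepsilon_g^{2\kappa+1}$ in \eqref{main term 1}.
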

\begin{proof}
 Breaking up the sum over $ad=n$ based on $g=\gcd(a,d)$, we can apply Lemma \ref{lem. t value in the small orbital integral} to get
\[J_{\sm}(\textbf{s},n)= 2C_{\kappa}^{-1}n^{\kappa-1}\sum_{\substack{g^2ad=n\\\gcd(a,d)=1}}\varepsilon_g^{2\kappa}\int_0^{\infty}\int_0^{\infty}\sum_{\substack{m\in\bZ\\ \gcd(g,m)=1}}\legendre{-m}{g}\frac{y_1^{s_1+\kappa/2}y_2^{s_2+\kappa/2}}{(iagy_1+idgy_2+m)^{\kappa}}\frac{\,dy_1}{y_1}\frac{\,dy_2}{y_2}.\]
Now we break up the sum over $m\in \mathbb{Z}$ with $\gcd(g,m)=1$ into congruence classes $r$ modulo $g$ with $(r,g)=1$. This gives 
\[J_{\sm}(\textbf{s},n)= 2C_{\kappa}^{-1}n^{\kappa-1}\sum_{\substack{g^2ad=n\\ \gcd(a,d)=1}}\varepsilon_g^{2\kappa}\int_0^{\infty}\int_0^{\infty}\sum_{\substack{r\Mod{g}\\ \gcd(r,g)=1}}\legendre{-r}{g}\sum_{\substack{m\in\bZ \\m\equiv r\Mod{g}}}\frac{y_1^{s_1+\kappa/2}y_2^{s_2+\kappa/2}}{(iagy_1+idgy_2+m)^{\kappa}}\frac{\,dy_1}{y_1}\frac{\,dy_2}{y_2}.\]
Now apply Lemma \ref{lem. possion type lemma} to obtain 
\begin{flalign*}
J_{\sm}(\textbf{s},n)&=\frac{(-2\pi i)^{\kappa}}{\Gamma(\kappa)} \frac{2n^{\kappa-1}}{C_{\kappa}}\sum_{\substack{g^2ad=n\\\gcd(a,d)=1}}\varepsilon_g^{2\kappa}\sum_{\substack{r\Mod{g}\\\gcd(r,g)=1}}\legendre{-r}{g}\sum_{m\geq1}e\left(\frac{mr}{g}\right)\frac{m^{\kappa-1}}{g^{\kappa}}\\
&\phantom{=}\times\int_0^{\infty}e^{-2\pi amy_1}y_1^{s_1+\kappa/2}\frac{\,dy_1}{y_1}\int_0^{\infty}e^{-2\pi dmy_2}y_2^{s_2+\kappa/2}\frac{\,dy_2}{y_2}.	
\end{flalign*} 
Making the change of variable $y_1\mapsto\frac{y_1}{2\pi am}$, $y_2\mapsto\frac{y_2}{2\pi dm},$ 
and recalling the definition of the Gamma function
\[\int_0^{\infty}e^{-y}y^s\frac{\,dy}{y}=\Gamma(s),\]
yields
\[J_{\sm}(\textbf{s},n)=\frac{2n^{\frac{\kappa}{2}-1}}{i^{\kappa}C_{\kappa}}\frac{\Gamma\left(s_1+\frac{\kappa}{2}\right)\Gamma\left(s_2+\frac{\kappa}{2}\right)}{(2\pi)^{s_1+s_2}\Gamma(\kappa)}\sum_{\substack{g^2ad=n\\\gcd(a,d)=1}}\frac{\varepsilon_g^{2\kappa}\legendre{-1}{g}}{a^{s_1}d^{s_2}}\sum_{m\geq1}\frac{c_{\chi_g}(m)}{m^{1+s_1+s_2}},\]
where 
\[c_{\chi}(m)=\sum_{\substack{r\Mod{g}\\\gcd(r,g)=1}}\chi(r)e\left(\frac{rm}{g}\right).\]
Next, we need to investigate the sums $c_{\chi_g}(m),$ which was studied in \cite{BettinBoberBookerConreyLeeMolteniOliverPlattSteiner2018}. Recall the notations in \S \ref{subsubsec.upper and lower triangualr matrices}. By \cite[Lemma 4.11]{BettinBoberBookerConreyLeeMolteniOliverPlattSteiner2018}, $c_{\chi_g}(m)=0$ if $g_2\nmid m,$ and otherwise
\[c_{\chi_g}(mg_2)=g_2\chi_{g_*}(g_0)c_{\chi_{g_*}}(m)c_{g_0}(m),\]  
where $c_{g_0}(m)$ is the Ramanujan sum:
\begin{equation}\label{eq. Ramanujan sum}
c_{g_0}(m)=\sum_{\substack{r\Mod{g_0}\\ \gcd(r,g_0)=1}}e\left(\frac{mr}{g_0}\right)=\sum_{\ell\mid\gcd(m,g_0)}\ell\mu\left(\frac{g_0}{\ell}\right).	
\end{equation}
Notice that $\chi_{g_*}$ is a real primitive character. Then $c_{\chi_{g_*}}(m)=\chi_{g_*}(m)\varepsilon_{g_*}\sqrt{g_{\ast}}.$ This implies
\[\sum_{m\geq1}\frac{c_{\chi_g}(m)}{m^{1+s_1+s_2}}=\frac{1}{g_2^{1+s_1+s_2}}\sum_{m\geq1}\frac{c_{\chi_g}(g_2m)}{m^{1+s_1+s_2}}=\frac{\chi_{g_*}(g_0)\varepsilon_{g_*}\sqrt{g_*}}{g_2^{s_1+s_2}}\sum_{m\geq1}\frac{\chi_{g_*}(m)c_{g_0}(m)}{m^{1+s_1+s_2}}.\]
Combining with \eqref{eq. Ramanujan sum} yields
\begin{align*}
    \sum_{m\geq1}\frac{c_{\chi_g}(m)}{m^{1+s_1+s_2}} &=\frac{\chi_{g_*}(g_0)\varepsilon_{g_*}\sqrt{g_*}}{g_2^{s_1+s_2}}\sum_{m\geq1}\frac{\chi_{g_*}(m)}{m^{1+s_1+s_2}}\sum_{\ell\mid\gcd(m,g_0)}\ell\mu\left(\frac{g_0}{\ell}\right) \\
    &=L(1+s_1+s_2,\chi_{g_*})\frac{\chi_{g_*}(g_0)\mu(g_0)\varepsilon_{g_*}\sqrt{g_*}}{g_2^{s_1+s_2}}\sum_{\ell|g_0}\frac{\mu(\ell)\chi_{g_*}(\ell)}{\ell^{s_1+s_2}},
\end{align*}
where in the second line we have interchanged the order of summation and then made the change of variables $m \mapsto m\ell$. Insert this result into $J_{\sm}(\textbf{s},n)$ to obtain the following:

\begin{align*}
    J_{\sm}(\textbf{s},n)&=\frac{2n^{\frac{\kappa}{2}-1}}{i^{\kappa}C_{\kappa}}\frac{\Gamma\left(s_1+\frac{\kappa}{2}\right)\Gamma\left(s_2+\frac{\kappa}{2}\right)}{(2\pi)^{s_1+s_2}\Gamma(\kappa)} \\
    &\phantom{=}\times\sum_{\substack{g^2ad=n\\\gcd(a,d)=1}}L(1+s_1+s_2,\chi_{g_*})\frac{\varepsilon_g^{2\kappa}\legendre{-1}{g}}{a^{s_1}d^{s_2}}\frac{\chi_{g_*}(g_0)\mu(g_0)\varepsilon_{g_*}\sqrt{g_*}}{g_2^{s_1+s_2}}\sum_{\ell|g_0}\frac{\mu(\ell)\chi_{g_*}(\ell)}{\ell^{s_1+s_2}}.
\end{align*}
Applying the change of variable $\ell\mapsto \frac{g_0}{\ell}$ and noticing that $g_*$ is the squarefree part of $g$, yields
\[\frac{\varepsilon_g^{2\kappa}\legendre{-1}{g}}{a^{s_1}d^{s_2}}\frac{\chi_{g_*}(g_0)\mu(g_0)\varepsilon_{g_*}\sqrt{g_*}}{g_2^{s_1+s_2}}\sum_{\ell|g_0}\frac{\mu(\ell)\chi_{g_*}(\ell)}{\ell^{s_1+s_2}}=g_*^{\frac{1}{2}+s_1+s_2}\frac{\varepsilon_g^{2\kappa+1}\legendre{-1}{g}}{a^{s_1}d^{s_2}g^{s_1+s_2}}\prod_{p|g_0}\left(1-\chi_{g_*}(p)p^{s_1+s_2}\right).\]
Inserting this last expression into $J_{\sm}(\textbf{s},n)$ completes the proof.
\end{proof}

\begin{remark}\label{rem, small orbit n=1}
	When $n=1$, we obtain:
\[
 J_{\sm}(\textbf{s},1)=\frac{2}{i^{\kappa}C_{\kappa}}\frac{\Gamma\left(s_1+\frac{\kappa}{2}\right)\Gamma\left(s_2+\frac{\kappa}{2}\right)}{(2\pi)^{s_1+s_2}\Gamma(\kappa)}\zeta(1+s_1+s_2).
\]
\end{remark}
Notice that $J_{\sm}(\textbf{s},n)$ has a pole at $(s_1,s_2)=(0,0)$ when $\chi_{g_*}$ is the trivial character. This occurs precisely when $g$ is a square. In the following corollary, we separate the terms with and without poles:
\begin{cor}\label{cor. small orbital integral estimation}
 Let $n$ be an odd square integer. Then 
 \[J_{\sm}(\textbf{s},n)=J_{\sm}^1(\textbf{s},n)+J_{\sm}^2(\textbf{s},n),\]
 where $J_{\sm}^1(\textbf{s},n)$ is given by
 \[J_{\sm}^1(\textbf{s},n)=\frac{2n^{\frac{\kappa}{2}-1}}{i^{\kappa}C_{\kappa}}\frac{\Gamma\left(s_1+\frac{\kappa}{2}\right)\Gamma\left(s_2+\frac{\kappa}{2}\right)}{(2\pi)^{s_1+s_2}\Gamma(\kappa)}\zeta(1+s_1+s_2)\sum_{\substack{g^4ad=n\\\gcd(a,d)=1}}\frac{1}{a^{s_1}d^{s_2}g^{2s_1+2s_2}}\prod_{p|g}(1-p^{s_1+s_2}),\]
  and $J_{\sm}^2(\textbf{s},n)$ is entire in $\bC^2$ satisfying
  \[J_{\sm}^2(\mathbf{0},n)\ll_{\varepsilon} \frac{n^{\frac{\kappa}{2}-\frac{3}{4}+\varepsilon}}{|C_{\kappa}|}B\left(\frac{\kappa}{2},\frac{\kappa}{2}\right). \]
\end{cor}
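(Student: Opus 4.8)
The plan is to start from the meromorphic formula for $J_{\sm}(\mathbf{s},n)$ given in Proposition \ref{prop. main term 1} and split the sum over triples $(g,a,d)$ with $g^2ad = n$ and $\gcd(a,d)=1$ according to whether the conductor $g_*$ of the character $\chi_{g_*}$ is trivial or not, i.e. whether $g$ is a perfect square or not. When $g = h^2$ is a square, we have $g_* = 1$, $\chi_{g_*}$ is the trivial character, $L(1+s_1+s_2,\chi_{g_*}) = \zeta(1+s_1+s_2)$, $g_0 = \rad(g)/g_* = \rad(g) = \rad(h)$ with $\prod_{p\mid g_0}(1-p^{s_1+s_2}) = \prod_{p\mid h}(1-p^{s_1+s_2})$, and $\varepsilon_g = \varepsilon_{h^2} = 1$, $\legendre{-1}{g}=1$; relabeling $g = h^2$ (so $g^2 = h^4$) turns this piece into exactly the displayed $J_{\sm}^1(\mathbf{s},n)$, which carries the pole of $\zeta(1+s_1+s_2)$ at $\mathbf{s}=\mathbf{0}$. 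All remaining terms — those with $g_*\neq 1$ — have $\chi_{g_*}$ a nontrivial real primitive character, so $L(1+s_1+s_2,\chi_{g_*})$ is entire; call their sum $J_{\sm}^2(\mathbf{s},n)$, which is therefore entire on $\mathbb{C}^2$.

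Next I would bound $J_{\sm}^2(\mathbf{0},n)$. Setting $\mathbf{s}=\mathbf{0}$, the Gamma factors become $\Gamma(\kappa/2)^2/\Gamma(\kappa) = B(\kappa/2,\kappa/2)$, and the prefactor contributes $n^{\kappa/2-1}/|C_\kappa|$ times $B(\kappa/2,\kappa/2)$ (up to the harmless $2/|i^\kappa|$). It remains to estimate
\[
\sum_{\substack{g^2ad=n\\ \gcd(a,d)=1\\ g\neq\square}} \bigl| L(1,\chi_{g_*})\bigr|\, g_*^{1/2}\,\frac{1}{g}\prod_{p\mid g_0}\bigl|1-\chi_{g_*}(p)\bigr|\cdot\frac{1}{g}\,.
\]
Here $g_*^{1/2}/g \le g^{1/2}/g = g^{-1/2}$, the factor $\prod_{p\mid g_0}|1-\chi_{g_*}(p)| \le 2^{\omega(g_0)} \ll_\varepsilon n^\varepsilon$, and the standard bound $L(1,\chi_{g_*}) \ll \log(2g_*) \ll_\varepsilon n^\varepsilon$ applies since $g_* \le g \le n^{1/2}$. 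The remaining factor $1/g$ from the argument of $a^{-s_1}d^{-s_2}g^{-s_1-s_2}$ at $\mathbf{s}=\mathbf{0}$ is $1$; rather, what survives is $g_*^{1/2} g^{-s_1-s_2}|_{\mathbf{s}=\mathbf{0}}\cdot g^{-0}$, so I should be careful: the net $g$-weight at $\mathbf{s}=\mathbf{0}$ is $g_*^{1/2}/1 = g_*^{1/2}$, not $g^{-1/2}$. To gain decay I would instead observe that $g\neq\square$ forces $g \le n^{1/2}$ and $g_*^{1/2} \le g^{1/2} \le n^{1/4}$, while the number of divisor-triples $(g,a,d)$ with $g^2ad=n$ is $\ll_\varepsilon n^\varepsilon$; summing $g_*^{1/2}\ll n^{1/4}$ over $\ll_\varepsilon n^\varepsilon$ terms yields the claimed $\ll_\varepsilon n^{1/4+\varepsilon}$, and hence $J_{\sm}^2(\mathbf{0},n) \ll_\varepsilon n^{\kappa/2 - 1 + 1/4 + \varepsilon}|C_\kappa|^{-1} B(\kappa/2,\kappa/2) = n^{\kappa/2-3/4+\varepsilon}|C_\kappa|^{-1}B(\kappa/2,\kappa/2)$.

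The main obstacle is bookkeeping rather than anything deep: one must verify that every $g$ appearing in $J_{\sm}^2$ is genuinely non-square so that $g_* \neq 1$ and the $L$-function has no pole, and one must carefully track the exact power of $g$ (and of $g_*$) that remains after setting $\mathbf{s}=\mathbf{0}$, since a miscount of a half-power is exactly the difference between $n^{\kappa/2-3/4}$ and $n^{\kappa/2-1/4}$. The clean way to do this is to note $g^2 \mid n$, hence $g \le n^{1/2}$, together with $g_* \le g$, and to use the elementary $L(1,\chi)\ll_\varepsilon q(\chi)^\varepsilon$ bound uniformly; the divisor bound $d_3(n)\ll_\varepsilon n^\varepsilon$ then absorbs the sum over $(g,a,d)$, and the $\varepsilon$ in the exponent comes entirely from these $n^\varepsilon$ factors.
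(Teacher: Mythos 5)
Your proposal is correct and follows the same route as the paper: split the sum in Proposition~\ref{prop. main term 1} according to whether $g$ is a square (so $\chi_{g_*}$ is trivial, giving the $\zeta$-pole, relabeled via $g=h^2$ to yield $J_{\sm}^1$) or not (so $L(1+s_1+s_2,\chi_{g_*})$ is holomorphic near $\mathbf{0}$), then bound $J_{\sm}^2(\mathbf{0},n)$ by observing that $g_*^{1/2}\le g^{1/2}\le n^{1/4}$ together with the divisor bound $\sum_{g^2ad=n}1\ll_\varepsilon n^\varepsilon$ and the standard estimates $L(1,\chi_{g_*})\ll_\varepsilon n^\varepsilon$, $2^{\omega(g_0)}\ll_\varepsilon n^\varepsilon$. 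You catch and correct your own momentary misbookkeeping of the $g$-power at $\mathbf{s}=\mathbf{0}$, and the final exponent $\kappa/2-3/4+\varepsilon$ agrees with the paper.
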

\begin{proof}
	Notice that $L(1+s_1+s_2,\chi_{g_*})$ has a pole at $(s_1,s_2)=(0,0)$ if and only if $\chi_{g_*}=\chi_1$ which occurs if and only if $g$ is a square, and in this case we have $g_*=1$ so that $L(s,\chi_{g_*})=\zeta(s)$. This motivates us to break up $J_{\sm}(\textbf{s},n)$ according to when $g$ is a square or not. Therefore we define
\begin{align*}
	J_{\sm}^1(\textbf{s},n)&=\frac{2n^{\frac{\kappa}{2}-1}}{i^{\kappa}C_{\kappa}}\frac{\Gamma\left(s_1+\frac{\kappa}{2}\right)\Gamma\left(s_2+\frac{\kappa}{2}\right)}{(2\pi)^{s_1+s_2}\Gamma(\kappa)} \\ 
    &\phantom{=}\times \sum_{\substack{g^2ad=n\\\gcd(a,d)=1\\g=\square}}L(1+s_1+s_2,\chi_{{g_*}})g_*^{\frac{1}{2}+s_1+s_2}\frac{\varepsilon_g^{2\kappa+1}\legendre{-1}{g}}{a^{s_1}d^{s_2}g^{s_1+s_2}}\prod_{p|g_0}\left(1-\chi_{g_*}(p)p^{s_1+s_2}\right),
\end{align*}
and
\begin{equation}\label{eq. entire term in the small orbit}
\begin{aligned}
    J_{\sm}^2(\textbf{s},n)&=\frac{2n^{\frac{\kappa}{2}-1}}{i^{\kappa}C_{\kappa}}\frac{\Gamma\left(s_1+\frac{\kappa}{2}\right)\Gamma\left(s_2+\frac{\kappa}{2}\right)}{(2\pi)^{s_1+s_2}\Gamma(\kappa)} \\
    &\phantom{=}\times \sum_{\substack{g^2ad=n\\\gcd(a,d)=1\\g\neq\square}}L(1+s_1+s_2,\chi_{g_*})g_*^{\frac{1}{2}+s_1+s_2}\frac{\varepsilon_g^{2\kappa+1}\legendre{-1}{g}}{a^{s_1}d^{s_2}g^{s_1+s_2}}\prod_{p|g_0}\left(1-\chi_{g_*}(p)p^{s_1+s_2}\right).
\end{aligned}
\end{equation}

Now we derive the formula for $J^1_{\sm}(\textbf{s},n)$. When $g$ is a square, $\varepsilon_g=\legendre{-1}{g}=1$ and $\chi_{g_*}=1.$ Moreover, $g_*=1$, $g_0=\rad(g)$ and $g_2=\frac{g}{\rad(g)}.$ Since in $J_{\sm}^1(\textbf{s},n)$, we have that $g$ must be a square, we substituting $g$ by $g^2$, observe that $g_2$ becomes $\frac{g^2}{\rad(g)}$. Doing this, we obtain
\[J_{\sm}^1(\textbf{s},n)=\frac{2n^{\frac{\kappa}{2}-1}}{i^{\kappa}C_{\kappa}}\frac{\Gamma\left(s_1+\frac{\kappa}{2}\right)\Gamma\left(s_2+\frac{\kappa}{2}\right)}{(2\pi)^{s_1+s_2}\Gamma(\kappa)}\zeta(1+s_1+s_2)\sum_{\substack{g^4ad=n\\\gcd(a,d)=1}}\frac{1}{a^{s_1}d^{s_2}g^{2s_1+2s_2}}\prod_{p|g}(1-p^{s_1+s_2}),\]    
as desired. As $J_{\sm}^1(\textbf{s},n)$ contains all the poles of $J_{\sm}(\textbf{s},n)$, $J_{\sm}^2(\textbf{s},n)$ admits analytic continuation to $\bC^2.$ Now we derive the bound for $J^2_{\sm}(\textbf{0},n)$. Set $(s_1,s_2)=(0,0)$. Observe that $g_*|g$ and $g^2|n,$ together imply $g_*\ll n^{\frac{1}{2}}$. We pick up a power of $n^\varepsilon$ from bounding $\sum_{g^2ad=n}1 \ll_{\varepsilon} n^\varepsilon$. Putting this all together, we conclude that
	 \[J_{\sm}^2(\mathbf{0},n)\ll_{\varepsilon} \frac{n^{\frac{\kappa}{2}-\frac{3}{4}+\varepsilon}}{|C_{\kappa}|}B\left(\frac{\kappa}{2},\frac{\kappa}{2}\right),\]
 which completes the proof.
\end{proof}

\subsection{The Dual Orbital Integral $J_{\du}(\textbf{s},n)$}\label{subsec. dual cell}

Before the calculation of the dual orbital integrals, we introduce the following useful lemmas:
\begin{lemma}\label{lem. special integrals in the dual orbital integral}
	For $\Re(s_1),\Re(s_2)\gg1,$ we set:
	\[J_1(s_1,s_2,\kappa)=\int_0^{\infty}\int_0^{\infty}\frac{y_1^{-s_1+\kappa/2}y_2^{-s_2+\kappa/2}}{(1+iy_1+iy_2)^{\kappa}}\frac{\,dy_1}{y_1}\frac{\,dy_2}{y_2},\]
and
\[J_2(s_1,s_2,\kappa)=\int_0^{\infty}\int_0^{\infty}\frac{y_1^{-s_1+\kappa/2}y_2^{-s_2+\kappa/2}}{(-1+iy_1+iy_2)^{\kappa}}\frac{\,dy_1}{y_1}\frac{\,dy_2}{y_2}.\]
The integrals are convergent when $\Re(s_1)>-\frac{\kappa}{2}+1$ and $\Re(s_2)<\frac{\kappa}{2}.$ Moreover, for $\Re(s_1),\Re(s_2)<\frac{\kappa}{2}$ and $\Re(s_1+s_2)>0,$ we have
\[J_1(s_1,s_2,\kappa)=\frac{e^{\frac{\pi i}{2}(s_1+s_2)}}{i^{\kappa}}\frac{\Gamma(-s_1+\kappa/2)\Gamma(-s_2+\kappa/2)\Gamma(s_1+s_2)}{\Gamma(\kappa)},\]
and
\[J_2(s_1,s_2,\kappa)=\frac{e^{-\frac{\pi i}{2}(s_1+s_2)}}{i^{\kappa}}\frac{\Gamma(-s_1+\kappa/2)\Gamma(-s_2+\kappa/2)\Gamma(s_1+s_2)}{\Gamma(\kappa)}.\]

\end{lemma}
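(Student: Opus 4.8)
The plan is to evaluate $J_1$ and $J_2$ by first carrying out one of the two variable integrations using a standard beta-type integral, and then recognizing the remaining single integral as another beta integral after an appropriate rotation of the contour. To begin, I fix $y_2$ and set $w = 1 + iy_2$, so that the inner $y_1$-integral is $\int_0^\infty y_1^{-s_1+\kappa/2}(w+iy_1)^{-\kappa}\,\frac{dy_1}{y_1}$. Substituting $y_1 = u w/i$ — more precisely, absorbing $w$ and the factor $i$ into the variable, which requires rotating the ray of integration and is legitimate for $\Re(w) > 0$ by a contour shift together with the decay of the integrand (valid precisely in the range $-\kappa/2 < \Re(s_1) < \kappa/2$ stated for convergence) — turns this into $(w/i)^{-s_1}\,i^{-\kappa}w^{-\kappa} \cdot w \cdot \int_0^\infty u^{-s_1+\kappa/2}(1+u)^{-\kappa}\,\frac{du}{u}$. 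The last integral is $B(-s_1+\kappa/2,\, s_1+\kappa/2) = \Gamma(-s_1+\kappa/2)\Gamma(s_1+\kappa/2)/\Gamma(\kappa)$ by the standard formula $\int_0^\infty u^{x-1}(1+u)^{-(x+y)}\,du = B(x,y)$, valid for $\Re(x),\Re(y)>0$.

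Next, collecting the powers of $w$, the inner integral equals $i^{s_1-\kappa}\,\Gamma(-s_1+\kappa/2)\Gamma(s_1+\kappa/2)\Gamma(\kappa)^{-1}\, w^{s_1 - \kappa + 1 - 1}$; wait — more carefully, $(w/i)^{-s_1} w^{-\kappa} w = i^{s_1} w^{1-s_1-\kappa}$, so the inner integral is
\[
i^{s_1-\kappa}\,\frac{\Gamma(-s_1+\kappa/2)\Gamma(s_1+\kappa/2)}{\Gamma(\kappa)}\, (1+iy_2)^{-(s_1+\kappa-1)}.
\]
Hmm, this introduces a $\Gamma(s_1+\kappa/2)$ factor that is not present in the claimed answer, so the remaining $y_2$-integral must conspire to cancel it. Plugging back in, $J_1$ becomes $i^{s_1-\kappa}\Gamma(-s_1+\kappa/2)\Gamma(s_1+\kappa/2)\Gamma(\kappa)^{-1}\int_0^\infty y_2^{-s_2+\kappa/2}(1+iy_2)^{-(s_1+\kappa-1)}\,\frac{dy_2}{y_2}$. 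Again rotating $y_2 = v/i$ (legitimate in the stated convergence range), this integral becomes $i^{s_2}\int_0^\infty v^{-s_2+\kappa/2}(1+v)^{-(s_1+\kappa-1)}\,\frac{dv}{v} = i^{s_2} B(-s_2+\kappa/2,\, s_1+s_2+\kappa/2 - 1)$. Here the condition $\Re(s_1+s_2)>0$ ensures the second beta-argument has positive real part (since $\Re(s_1+s_2+\kappa/2-1) > \kappa/2 - 1 > 0$ as $\kappa > 4$), and $\Re(s_2) < \kappa/2$ ensures the first does. This gives $B(-s_2+\kappa/2, s_1+s_2+\kappa/2-1) = \Gamma(-s_2+\kappa/2)\Gamma(s_1+s_2+\kappa/2-1)/\Gamma(s_1+\kappa/2-1)$. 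Combining, the $\Gamma(s_1+\kappa/2)$ does not quite cancel — so I should instead do the two integrations symmetrically, or more cleanly combine both beta integrals at once via a double beta / Dirichlet-type integral $\int_0^\infty\int_0^\infty y_1^{a-1}y_2^{b-1}(1+y_1+y_2)^{-c}\,dy_1\,dy_2 = \Gamma(a)\Gamma(b)\Gamma(c-a-b)/\Gamma(c)$, which is the real engine here; applying it with $(a,b,c) = (-s_1+\kappa/2, -s_2+\kappa/2, \kappa)$ directly yields $\Gamma(-s_1+\kappa/2)\Gamma(-s_2+\kappa/2)\Gamma(s_1+s_2)/\Gamma(\kappa)$. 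The phase factor $e^{\pi i(s_1+s_2)/2}/i^\kappa$ for $J_1$ (resp. $e^{-\pi i(s_1+s_2)/2}/i^\kappa$ for $J_2$) then comes from tracking the rotations $y_j \mapsto \pm y_j/i$ needed to bring $(1 \pm iy_1 + iy_2)^{-\kappa}$ into the form $(1+y_1+y_2)^{-\kappa}$: in the $J_1$ case one writes $1+iy_1+iy_2 = i(-i + y_1+y_2)$ and rescales, picking up $i^{-\kappa}$ and $i^{s_1}i^{s_2} = e^{\pi i (s_1+s_2)/2}$ from the two substitutions $y_j = u_j$ with the shift absorbed; for $J_2$ one uses $-1+iy_1+iy_2$ and the conjugate rotation, producing $e^{-\pi i(s_1+s_2)/2}$. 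I will justify each contour rotation by a quarter-circle argument at infinity, using the decay guaranteed by the stated inequalities on $\Re(s_1),\Re(s_2)$.

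The main obstacle will be making the contour rotations rigorous and bookkeeping the branch of the resulting complex powers — i.e., verifying that $(1+iy_1+iy_2)^{-\kappa}$, with $\kappa$ a half-integer so the power is genuinely multivalued, transforms with exactly the phase $e^{\pi i(s_1+s_2)/2}i^{-\kappa}$ claimed and no stray factor of $e^{\pm 2\pi i \kappa}$ or $e^{\pm 2\pi i(s_1+s_2)}$. This is precisely where Lemma \ref{lemma. complex power st} will be invoked: the products $y_j^{\alpha}\cdot(\text{rotation factor})^{\alpha}$ must be combined with attention to whether the sum of arguments leaves the principal branch interval $(-\pi,\pi)$. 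Since all the $y_j$ are positive reals (argument $0$) and the rotation is by $\pm\pi/2$, the argument sums stay in $(-\pi,\pi)$ as long as we rotate in the correct direction, so Lemma \ref{lemma. complex power st} applies in its first case and no correction factor appears; but this needs to be checked carefully for both $J_1$ (rotate by $-\pi/2$, since $\Re(w)>0$ forces the ray into the lower half-plane) and $J_2$ (rotate by $+\pi/2$). Once the rotations are justified, the identification with the Dirichlet beta integral and the evaluation are routine. Finally, the convergence claim "$\Re(s_1) > -\kappa/2+1$ and $\Re(s_2) < \kappa/2$" for the original (unrotated) integrals follows by estimating $|(1\pm iy_1+iy_2)^{-\kappa}| \asymp (1+y_1+y_2)^{-\kappa}$ and checking absolute convergence of $\int\int (1+y_1+y_2)^{-\kappa} y_1^{-\Re(s_1)+\kappa/2}y_2^{-\Re(s_2)+\kappa/2}\,\frac{dy_1}{y_1}\frac{dy_2}{y_2}$ near $0$ and $\infty$ in each variable.
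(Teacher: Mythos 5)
Your approach is a valid alternative to the paper's and reaches the same conclusion, but the route is genuinely different. The paper first applies the change of variables $y_2\mapsto 1/y_2$, $y_1\mapsto y_1/y_2$ to turn $(1\pm iy_1+iy_2)^{-\kappa}$ into $(\pm y_2 + i(y_1+1))^{-\kappa}$, then rotates only the $y_2$-contour by a quarter turn (in opposite directions for $J_1$ vs.\ $J_2$) and finishes with two one-dimensional beta integrals via \eqref{B function}. You instead rotate both $y_j$-contours and feed the result directly into the two-dimensional Dirichlet/beta formula $\iint y_1^{a-1}y_2^{b-1}(1+y_1+y_2)^{-c}\,dy_1\,dy_2 = \Gamma(a)\Gamma(b)\Gamma(c-a-b)/\Gamma(c)$. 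Incidentally, the variable-by-variable computation you abandoned would have worked: the spurious $\Gamma(s_1+\kappa/2)$ cancels against $B(-s_2+\kappa/2,\,s_1+s_2)=\Gamma(-s_2+\kappa/2)\Gamma(s_1+s_2)/\Gamma(s_1+\kappa/2)$ coming from the second integral; your difficulty was a slipped power of $w$ ($w^{-s_1-\kappa/2}$, not $w^{1-s_1-\kappa}$), not a genuine obstruction.

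The one place your write-up is too quick is precisely the $J_2$ case, and it is subtler than you indicate. For $J_1$, substituting $y_j=-iu_j$ and rotating the $u_j$-contours from the positive imaginary axis to the positive real axis keeps $1+u_1+u_2$ strictly in the right half-plane, so Lemma \ref{lemma. complex power st} applies throughout with argument sums in $(-\pi,\pi)$ and the phase $e^{\pi i(s_1+s_2)/2}i^{-\kappa}$ drops out cleanly. For $J_2$, however, the analogous substitution $y_j=iu_j$ sends $-1+iy_1+iy_2$ to $-(1+u_1+u_2)$, and after rotating $u_j$ to the positive real axis this sits \emph{exactly} on the branch cut $(-\infty,-1]$ — so your assertion that ``the argument sums stay in $(-\pi,\pi)$'' fails at the endpoint, where they reach $\pi$. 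The integrand is still the continuous boundary value of a holomorphic function on the fourth-quadrant sector, which makes the rotation legitimate, but the correct interpretation is $(-(1+u_1+u_2))^{\kappa}=e^{i\pi\kappa}(1+u_1+u_2)^{\kappa}$ as a boundary limit; the resulting $e^{-i\pi\kappa}$ combines with the $e^{i\pi\kappa/2}$ from the powers of $i$ to give $i^{-\kappa}$, which is exactly the ``stray $e^{\pm i\pi\kappa}$'' bookkeeping you flagged as a concern. This is where the paper's preliminary change of variables earns its keep: after it, the rotation for $J_2$ (into the fourth quadrant rather than the first) never touches a branch cut, so no boundary-value argument is needed. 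Your proof is correct once that step is spelled out, but as written the $J_2$ phase does not follow from the case of Lemma \ref{lemma. complex power st} you cite.
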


\begin{proof}
	We only prove the claim for $J_1(s_1,s_2,\kappa)$ as the proof for $J_2(s_1,s_2,\kappa)$ is handled similarly. The change of variable $y_2\mapsto \frac{1}{y_2}$ and $y_1\mapsto \frac{y_1}{y_2}$ yields
\[J_1(s_1,s_2,\kappa)=\int_0^{\infty}\int_0^{\infty}\frac{y_2^{s_1+s_2}y_1^{-s_1+\kappa/2}}{(y_2+i(y_1+1))^{\kappa}}\frac{\,dy_2}{y_2}\frac{\,dy_1}{y_1}.\]
Following the method in \cite[Proposition 2.2]{RR05}, the integral is convergent when $\Re(s_1)>-\frac{\kappa}{2}+1$ and $\Re(s_2)<\frac{\kappa}{2}.$

We now examine the inner integral of $y_2.$ Define the function $h(z)=(z+i(y_1+1))^{-\kappa}e^{(s_1+s_2-1)\log z}.$ This is holomorphic when $-\frac{1}{10}<\arg(z)<\frac{\pi}{2}+\frac{1}{10}.$ Let $R>0.$ We consider the sector contour constructed by the line segment connecting $0$ and $R,$ the arc connecting $R$ and $iR$ and the line segment connecting $iR$ and $0.$ By Cauchy's theorem, integrating over this contour will be $0$. Upon taking $R\rightarrow\infty$, the integral over the arc connecting $R$ and $iR$ will go to $0$. Thus, the sum of the integral along the real axis from $0$ to $\infty$ and the integral along the imaginary axis from $i\infty$ to $0$ will be $0$. Using this, we have that:

\[J_1(s_1,s_2,\kappa)=\frac{e^{\frac{\pi i}{2}(s_1+s_2)}}{i^{\kappa}}\int_0^{\infty}y_1^{-s_1+\kappa/2}\int_0^{\infty}\frac{y_2^{s_1+s_2}}{(y_2+y_1+1)^{\kappa}}\frac{\,dy_2}{y_2}\frac{\,dy_1}{y_1}.\]
Then the change of variable $y_2\mapsto y_2(y_1+1)$ implies
\[J_1(s_1,s_2,\kappa)=\frac{e^{\frac{\pi i}{2}(s_1+s_2)}}{i^{\kappa}}\int_0^{\infty}y_1^{-s_1+\kappa/2}(y_1+1)^{s_1+s_2-\kappa}\int_0^{\infty}\frac{y_2^{s_1+s_2}}{(y_2+1)^{\kappa}}\frac{\,dy_2}{y_2}\frac{\,dy_1}{y_1}.\]
Recall that, for $\Re(z_1),\Re(z_2)>0,$ we have
\begin{equation}\label{B function}
    \frac{\Gamma(z_1)\Gamma(z_2)}{\Gamma(z_1+z_2)}=B(z_1,z_2)=\int_0^{\infty}\frac{t^{z_1}}{(t+1)^{z_1+z_2}}\frac{\,dt}{t}.
 \end{equation}
This implies
\[J_1(s_1,s_2,\kappa)=\frac{e^{\frac{\pi i}{2}(s_1+s_2)}}{i^{\kappa}}\frac{\Gamma(-s_1+\kappa/2)\Gamma(-s_2+\kappa/2)\Gamma(s_1+s_2)}{\Gamma(\kappa)},\]
 when $\Re(s_1),\Re(s_2)<\frac{\kappa}{2}$ and $\Re(s_1+s_2)>0$. 
\end{proof}

The next lemma gives the explicit value for $t_{\gamma}$ when $\gamma\in \overline{L}_n.$

\begin{lemma}\label{lem. t value in the dual orbital integral}
	Let $n$ be an odd square integer. Let $\gamma:=\gamma_{a,d,m}=\begin{pmatrix}
		a&\\m&d
	\end{pmatrix}$ be a matrix satisfying $a,d>0,$ $ad=n$, $m\equiv0\Mod{4}$, and $\gcd(a,d,m)=1$. Let $g=\gcd(a,d).$ Then
	\[t_{a,d,m}:=t_{\gamma}=\varepsilon_{g}^{-1}\legendre{m}{g}.\]
\end{lemma}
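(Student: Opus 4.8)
The plan is to reduce the computation of $t_{a,d,m}$ for the lower triangular matrix $\gamma_{a,d,m}=\begin{psmallmatrix}a&\\m&d\end{psmallmatrix}$ to the upper triangular case already handled in Lemma \ref{lem. t value in the small orbital integral}. The key observation is that transposition exchanges upper and lower triangular matrices: if we write $w=\begin{psmallmatrix}&-1\\1&\end{psmallmatrix}$, then for $\gamma=\begin{psmallmatrix}a&b\\c&d\end{psmallmatrix}$ we have $w^{-1}\,{}^t\!\gamma\,w = \begin{psmallmatrix}d&-c\\-b&a\end{psmallmatrix}$, so conjugating the transpose of $\gamma_{a,d,m}$ by $w$ produces an upper triangular matrix of the shape $\begin{psmallmatrix}d&-m\\&a\end{psmallmatrix}$ (possibly after adjusting signs to land in $M_4^+$, using that $a,d>0$). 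Since $w\in\Gamma_0(4)$ is \emph{not} true ($4\nmid 1$), one cannot conjugate by $w$ directly inside $\Gamma_0(4)$; instead I would use the Fricke-type involution or, more elementarily, directly mimic the proof of Lemma \ref{lem. t value in the small orbital integral} for the lower triangular matrices.

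Concretely, I would proceed as follows. First, just as in Lemma \ref{lem. t value in the small orbital integral}, observe that $\begin{psmallmatrix}a&\\m+\ell a&d\end{psmallmatrix}=\begin{psmallmatrix}a&\\m&d\end{psmallmatrix}\begin{psmallmatrix}1&\\ \ell&1\end{psmallmatrix}$, and since $\begin{psmallmatrix}1&\\ \ell&1\end{psmallmatrix}\in\Gamma_0(4)$ provided $4\mid\ell$, the cocycle condition \eqref{eq. cocyle condition} lets us reduce $m$ modulo $4a$ without changing $t_{a,d,m}$; combined with $m\equiv 0\Mod 4$ this keeps $m$ in the correct residue class. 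Next, I would produce an explicit Smith-type decomposition $\gamma_{a,d,m}=\gamma_1'\begin{psmallmatrix}1&\\&ad\end{psmallmatrix}\gamma_2'$ with $\gamma_1',\gamma_2'\in\Gamma_0(4)$, analogous to \eqref{eq. explicit smith normal form} but adapted to lower triangular matrices: taking $g_1=\gcd(m,a)$ (note here it is $\gcd(m,a)$ playing the role that $\gcd(m,d)$ played before, because the off-diagonal entry now sits below $a$), solving $mx+ay=g_1$ with $x$ a positive prime $\equiv 1\Mod 4$ coprime to $a$, then choosing $s,r$ with $dxs-4g_1 r=1$. Applying the cocycle condition \eqref{eq. cocyle condition} to this factorization and using the compatibility condition \eqref{eq. compatibility condition} reduces everything to computing a product of Jacobi symbols and $\varepsilon$-factors, exactly as in the previous lemma.

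Then I would run the same Jacobi-symbol bookkeeping as in the proof of Lemma \ref{lem. t value in the small orbital integral}: use that $ad=n$ is an odd square to get $a\equiv d\Mod 4$, hence $\varepsilon_a=\varepsilon_d=\varepsilon_g$; use quadratic reciprocity with the prime $x\equiv 1\Mod 4$; use $\gcd(a,d,m)=1$ together with $ad$ a square to conclude that the squarefree part $d_*=a_*$ of both $a$ and $d$ divides $\gcd$-type quantities in the right way so that the stray Legendre symbols collapse to $1$. The net effect should be $t_{a,d,m}=\varepsilon_g^{-1}\legendre{m}{g}$, with the sign on $m$ coming out as $+$ rather than $-$ precisely because the roles of the two diagonal entries are swapped relative to the upper triangular case (equivalently, the off-diagonal entry is now $+m$ in the position that was $-m$ after the implicit transpose).

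The main obstacle I anticipate is keeping the signs and the choice of $\gcd$ straight: in the upper triangular case the relevant quantity was $g_1=\gcd(m,d)$ and one reduced $m$ modulo $d$, whereas in the lower triangular case the natural invariant is $\gcd(m,a)$ and one reduces $m$ modulo $a$ (while also respecting the constraint $m\equiv 0\Mod 4$, which the upper triangular matrices did not carry). One must check carefully that the congruence $m\equiv 0\Mod 4$ is preserved under the reduction step and that it does not interfere with the prime $x$ being chosen $\equiv 1\Mod 4$; since $n$ is odd this is automatic, but it should be spelled out. A clean alternative, if the direct computation becomes unwieldy, is to note $\gamma_{a,d,m}= {}^{\iota}\!\big({}^t(\gamma^{d,a,-m})\big)$ up to an element of $\Gamma_0(4)$ and to track how $j_\gamma$ transforms under transposition and the operations in \eqref{eq. matrix notations}; but I expect the direct approach, mirroring Lemma \ref{lem. t value in the small orbital integral} almost verbatim with $a\leftrightarrow d$ and a sign flip, to be the most transparent.
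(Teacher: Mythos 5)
Your proposal and the paper take genuinely different routes to the same reduction-to-Lemma-3.2 goal. The paper does \emph{not} construct a new Smith decomposition for the lower-triangular matrix. Instead, it writes a one-sided factorization
\[
\begin{pmatrix} a & \\ m & d \end{pmatrix}
=
\begin{pmatrix} a/h & -y \\ m/h & x \end{pmatrix}
\begin{pmatrix} h & yd \\ & ad/h \end{pmatrix},
\qquad h=\gcd(a,m),\quad \tfrac{a}{h}x+\tfrac{m}{h}y=1,
\]
observes that the left factor lies in $\Gamma_0(4)$ (because $4\mid m/h$ since $4\mid m$ and $h$ is odd) while the right factor is upper triangular, and then applies the cocycle to get
$t_{a,d,m}=\varepsilon_x^{-1}\legendre{m/h}{x}\,t^{h,\,ad/h,\,yd}$,
with $t^{h,ad/h,yd}$ supplied by Lemma 3.2 as a black box. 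This is exactly the ``reduce to the upper triangular case'' route you wanted at the outset but discarded after noting $w\notin\Gamma_0(4)$: you do not need to conjugate the lower-triangular matrix into an upper-triangular one, you only need to factor off a single $\Gamma_0(4)$ element on the left. That observation is what makes the argument short.

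Your fallback plan — redo the Smith decomposition of Lemma 2.4 from scratch for lower-triangular matrices — is a different and substantially heavier route, and as written it has a real gap: you never produce the matrices $\gamma_1',\gamma_2'\in\Gamma_0(4)$. That construction does not come for free. The transpose of the decomposition in \eqref{eq. explicit smith normal form} lands in the upper Hecke group $\Gamma^0(4)$, not $\Gamma_0(4)$; so ``analogous to \eqref{eq. explicit smith normal form} but adapted'' hides precisely the nontrivial step, namely arranging for the lower-left entries of \emph{both} factors to be divisible by $4$. (The constraint $4\mid m$ helps, but one still has to exhibit the matrices.) Relatedly, the ``clean alternative'' you mention, identifying $\gamma_{a,d,m}$ with ${}^{\iota}\big({}^t(\gamma^{d,a,-m})\big)$ up to $\Gamma_0(4)$, does not hold: ${}^{\iota}\big({}^t(\gamma^{d,a,-m})\big)=\begin{psmallmatrix}d&\\-m&a\end{psmallmatrix}$, which differs from $\begin{psmallmatrix}a&\\m&d\end{psmallmatrix}$ by conjugation by $w$, again outside $\Gamma_0(4)$, so it is not a one-sided $\Gamma_0(4)$ relation that the cocycle can absorb. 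Finally, a small but concrete slip: $\begin{psmallmatrix}a&\\m&d\end{psmallmatrix}\begin{psmallmatrix}1&\\\ell&1\end{psmallmatrix}=\begin{psmallmatrix}a&\\m+\ell d&d\end{psmallmatrix}$, not $m+\ell a$; to shift $m$ by multiples of $a$ you must left-multiply by $\begin{psmallmatrix}1&\\\ell&1\end{psmallmatrix}$ (still with $4\mid\ell$), and the cocycle handles that fine, but the identity as written is wrong. Your downstream Jacobi-symbol bookkeeping remarks ($a\equiv d\Mod 4$, $\varepsilon_a=\varepsilon_d=\varepsilon_g$, using $\gcd(a,d,m)=1$ and $ad$ a square, reciprocity with a prime $\equiv 1\Mod 4$) are all consistent with what the paper does once the factorization is in place, so the gap is localized to the unconstructed decomposition. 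I would recommend switching to the paper's one-sided $L\cdot U$ factorization, which dodges the problem entirely.
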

\begin{proof}
	For $\gamma=\begin{pmatrix}
		a&\\m&d
	\end{pmatrix},$ we recall from  \S \ref{subsubsec.upper and lower triangualr matrices} that $h=\gcd(a,m)$ and $h_1=\gcd\left(h,\frac{ad}{h}\right)$, so we can find $x,y\in\bZ$ such that $\frac{a}{h}x+\frac{m}{h}y=1.$ Furthermore, we can assume that $y$ is a prime and $\gcd(y,mn)=1.$ Thus, we can write
	\[\begin{pmatrix}
		a&\\m&d
	\end{pmatrix}=\begin{pmatrix}
		a/h&-y\\m/h&x
	\end{pmatrix}\begin{pmatrix}
		h&yd\\&ad/h
	\end{pmatrix}.\]
	Notice that as $ad=n$ is odd so is $h$; thus $\begin{pmatrix}
		a/h&-y\\m/h&x
	\end{pmatrix}\in\Gamma_0(4).$ If we apply the cocycle relation, we see that 
    \[
    t_{a,d,m}=\varepsilon_x^{-1}\legendre{m/h}{x} t^{h,\frac{ad}{h},yd}.
    \]
    Now we claim that $\varepsilon_x^{-1}\legendre{m/h}{x} =\varepsilon_{a/h}^{-1}\legendre{m/h}{a/h}$. Indeed, $\frac{a}{h}x\equiv 1\Mod{4}$ because $\frac{a}{h}x=1-\frac{m}{h}y$. Hence $\varepsilon_x=\varepsilon_{a/h}$ and $\legendre{m/h}{a/h}\legendre{m/h}{x}=\legendre{m/h}{ax/h}=\legendre{m/h}{1-my/h}=1$. These two facts together imply $\varepsilon_x^{-1}\legendre{m/h}{x} =\varepsilon_{a/h}^{-1}\legendre{m/h}{a/h}$. If we use this and apply Lemma \ref{lem. t value in the small orbital integral}, we obtain 
    \[t_{a,d,m}=\varepsilon_{a/h}^{-1}\varepsilon_{h_1}^{-1}\legendre{m/h}{a/h}\legendre{-yd}{h_1}.\]
    Now we shall write $h=h_*(h')^2$ and $d=d_*(d')^2$ with the goal of showing that $\varepsilon_{a/h}^{-1}\legendre{m/h}{a/h}=\varepsilon_{h_*d_*}^{-1}\legendre{m/h}{h_*d_*}$ and $\varepsilon_{h_1}^{-1}\legendre{-yd}{h_1}=\varepsilon_{h_*}^{-1}\legendre{-yd}{h_*}$. We will do this by studying prime by prime. In particular, for the first claim it suffices to show that $\ord_p(a/h)$ is odd if and only if $p\mid h_*d_*$, and for the second claim it suffices to show that $\ord_p(h_1)$ is odd if and only if $p\mid h_*$. For the first claim, we shall have two cases depending on if for a prime $p$ if $p\mid h$ or $p\nmid h$. These two cases will correspond to the $p\mid h_*$ and $p\mid d_*$ conditions. 

    Thus, suppose $p$ is a prime such that $\ord_p(h)\geq 1$. Then $\gcd(a,d,m)=1$ implies $\ord_p(d)=0$ and that $\ord_p(a)=\ord_p(n)$. In particular, $\ord_p(a)=\ord_p(n)$ is even because $n$ is a square. Now $\ord_p(a/h)$ is odd if and only if $\ord_p(h)$ is odd which is equivalent to $p\mid h_*$.

    Now suppose that $p$ is a prime such that $p\nmid h$, but $p\mid d$. We wish to show that $\ord_p(a/h)$ is odd if and only if $p\mid d_*$. Then we will have that $\ord_p(a/h)=\ord_p(a)$. Now as $ad=n$ is a square, we have that $\ord_p(a)$ is odd if and only if $\ord_p(d)$ is odd which is equivalent to $p\mid d_*$. These two cases allow us to prove the first claim; as for the second claim, assume that $p\mid h_1$. We have two cases as $h_1=\gcd\left(h,\frac{a}{h}\right)$. If $\ord_p(h_1)=\ord_p(h)$, then it is clear that $\ord_p(h_1)$ is odd if and only if $p\mid h_*$. Now if $\ord_p(h_1)=\ord_p(a/h)$, then as $\ord_p(h)\geq 1$, we may use the first case above to conclude that $\ord_p(h_1)$ is odd if and only if $p\mid h_*$. This finishes the proof of the second claim, putting this together yields:

	\[t_{a,d,m}=\varepsilon_{h_*d_*}^{-1}\varepsilon_{h_*}^{-1}\legendre{m/h}{h_*d_*}\legendre{-yd}{h_*}.\]
	Notice that $\ord_{p}(a)>\ord_p(h)$ if $p|h_*$, which implies:
	\[\legendre{y}{h_*}\legendre{m/h}{h_*}=\legendre{1-xa/h}{h_*}=1\]
	and
	\[t_{a,d,m}=\varepsilon_{h_*d_*}^{-1}\varepsilon_{h_*}^{-1}\legendre{-d}{h_*}\legendre{m/h}{d_*}=\varepsilon_{h_*d_*}^{-1}\varepsilon_{h_*}^{-1}\legendre{-d_*}{h_*}\legendre{m/h}{d_*}.\]
	
Finally, notice that $d_*$ is the square-free part of $d$ and $ad$ is square. This implies that $d_*$ is also the square-free part of $g=\gcd(a,d).$ Then notice that $h_*$  is the square-free part of $h$ and $\gcd(m,g)=1.$ Then we can replace $h_*$ by $h$ and $d_*$ by $g$ in $t_{a,d,m},$ which yields:
\[t_{a,d,m}=\varepsilon_{hd}^{-1}\varepsilon_{h}^{-1}\legendre{-1}{h}\legendre{g}{h}\legendre{m/h}{g}.\]
By the quadratic reciprocity $\legendre{g}{h}\legendre{h}{g}=(-1)^{\frac{h-1}{2}\cdot\frac{g-1}{2}}$ and the fact
\[\varepsilon_{hg}(-1)^{\frac{h-1}{2}\cdot\frac{g-1}{2}}=\varepsilon_h\varepsilon_g,\]
we obtain:
\[t_{a,d,m}=\varepsilon_{g}^{-1}\varepsilon_{h}^{-2}\legendre{-1}{h}\legendre{h}{g}\legendre{m/h}{g}.\]
Then apply $\varepsilon_{h}^{-2}\legendre{-1}{h}=1$ and we complete the proof.
\end{proof}

Assume the notations in \S \ref{subsubsec.upper and lower triangualr matrices}, and the main result in \S \ref{subsec. dual cell} is:
\begin{prop}\label{prop. dual cell}
    Let $n$ be an odd square integer. The dual orbital integral $J_{\du}(\textbf{s},n)$ is absolutely convergent in the region
    \[
    \left\{\mathbf{s} = (s_{1},s_{2}) \in \mathbb{C}^{2}:\Re(s_1+s_2)>1, -\frac{\kappa}{2}+1<\Re(s_1)<\frac{\kappa}{2}, \Re(s_2)<\frac{\kappa}{2}\right\}.
    \]
Moreover, it admits a meromorphic continuation to $\mathbb{C}^2$ given explicitly by 
\begin{equation}\label{main term 3}
\begin{split}
	J_{\du}(\textbf{s},n)
	&=\frac{2(2\pi)^{s_1+s_2}n^{\frac{\kappa}{2}-1}}{i^{\kappa}C_{\kappa}4^{s_1+s_2}}\frac{\Gamma(-s_1+\kappa/2)\Gamma(-s_2+\kappa/2)}{\Gamma(\kappa)}\\
	&\phantom{=}\times \sum_{\substack{adg^2=n\\ \gcd(a,d)=1}}L(1-s_1-s_2,\chi_{g_*})g_*^{\frac{1}{2}-s_1-s_2}d^{s_1}a^{s_2}g^{s_1+s_2}\varepsilon_g^{2\kappa+1}\legendre{-1}{g}\prod_{p|g_0}\left(1-\frac{\chi_{g_*}(p)}{p^{s_1+s_2}}\right).
\end{split}
\end{equation}
\end{prop}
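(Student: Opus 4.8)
The plan is to mirror the proof of Proposition~\ref{prop. main term 1} but work instead with the lower-triangular matrices in $\overline{L}_n$. First I would start from the expression \eqref{dual orbit of RTF} for $J_{\du}(\textbf{s},n)$, use the definition of $j_{\gamma}(z)$ together with Lemma~\ref{lem. t value in the dual orbital integral} (which gives $t_{a,d,m}=\varepsilon_g^{-1}\legendre{m}{g}$), and write
\[
J_{\du}(\textbf{s},n)=2C_{\kappa}^{-1}n^{\kappa-1}\int_0^{\infty}\!\!\int_0^{\infty}\sum_{\substack{ad=n\\a,d>0}}\sum_{\substack{0\neq m\in 4\bZ\\\gcd(a,d,m)=1}}\frac{y_1^{s_1+\kappa/2}y_2^{s_2+\kappa/2}(t_{a,d,m})^{-2\kappa}}{(iay_1+idy_2+m)^{\kappa}}\frac{\,dy_1}{y_1}\frac{\,dy_2}{y_2}.
\]
As in the small-cell case I would break the sum over $ad=n$ according to $g=\gcd(a,d)$, replace $a\mapsto ga$, $d\mapsto gd$, and use Lemma~\ref{lem. t value in the dual orbital integral} to pull the character $\legendre{\cdot}{g}$ out of the $m$-sum. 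The essential structural difference here is that the denominator $iay_1+idy_2+m$ no longer vanishes for $m$ near $0$ in the relevant range (since $m\equiv 0\Mod 4$ and $y_1,y_2>0$), but the sum over $m$ runs over multiples of $4$ rather than all of $\bZ$, so the scaling of the dual cell is genuinely different and will account for the $4^{s_1+s_2}$ and the reflected $L$-value $L(1-s_1-s_2,\chi_{g_*})$.

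The next step is to handle the double integral. Rather than applying the Lipschitz/Poisson summation (Lemma~\ref{lem. possion type lemma}) as in the small-cell case, I would follow the route of Lemma~\ref{lem. special integrals in the dual orbital integral}: after rescaling $y_1\mapsto y_1/|m|$, $y_2\mapsto y_2/|m|$ (to normalize the $m$ in the denominator) and separating the contributions of $m>0$ and $m<0$, each inner double integral becomes (up to the sign of $m$ entering as $\pm1$) either $J_1(\cdots)$ or $J_2(\cdots)$ with shifted parameters. Actually, matching the shape of \eqref{main term 3}, the natural move is to first pull out a factor of $|m|^{\kappa}$ from the denominator, rescale $y_i$ by $a/|m|$ and $d/|m|$ respectively so the denominator becomes $\pm 1+iy_1+iy_2$, and then invoke Lemma~\ref{lem. special integrals in the dual orbital integral} to evaluate the resulting $y$-integrals as $\Gamma(-s_1+\kappa/2)\Gamma(-s_2+\kappa/2)\Gamma(s_1+s_2)/\Gamma(\kappa)$ times an exponential phase $e^{\pm\frac{\pi i}{2}(s_1+s_2)}$ depending on $\sgn(m)$. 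Summing the $m>0$ and $m<0$ contributions, the exponential phases combine with the arithmetic weight $\legendre{m}{g}\legendre{-1}{g}$ and, after writing $m=4m'$, produce a Dirichlet series $\sum_{m'\ge 1}\chi_g(m')(\cdots)/m'^{1-s_1-s_2}$, i.e.\ essentially a completed $L(1-s_1-s_2,\chi_{g_*})$ after the same character-sum manipulation (via \cite[Lemma 4.11]{BettinBoberBookerConreyLeeMolteniOliverPlattSteiner2018}) that was used in Proposition~\ref{prop. main term 1}.

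Then I would carry out exactly the same Gauss-sum / Ramanujan-sum bookkeeping as in the proof of Proposition~\ref{prop. main term 1}: decompose $c_{\chi_g}$ using $g_*$, $g_0$, $g_2$, use $c_{\chi_{g_*}}(m')=\chi_{g_*}(m')\varepsilon_{g_*}\sqrt{g_*}$, interchange summation, and perform the change of variable $\ell\mapsto g_0/\ell$ to arrive at the Euler product $\prod_{p\mid g_0}(1-\chi_{g_*}(p)p^{-(s_1+s_2)})$. The only genuinely new check is the convergence region: the $m$-sum now converges for $\Re(s_1+s_2)>1$ (the reflected normalization), while Lemma~\ref{lem. special integrals in the dual orbital integral} forces $\Re(s_1),\Re(s_2)<\kappa/2$ and $-\kappa/2+1<\Re(s_1)$ for the $y_1$-integral; intersecting these gives the stated domain. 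Meromorphic continuation to $\bC^2$ then follows from the analytic continuation of $L(1-s_1-s_2,\chi_{g_*})$ and the Gamma factors, just as before.

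The main obstacle I anticipate is bookkeeping the various exponential phases $e^{\pm\frac{\pi i}{2}(s_1+s_2)}$ and fourth roots of unity ($\varepsilon_g$, $i^{\kappa}$, $\legendre{-1}{g}$) so that they combine cleanly into the real factor $\varepsilon_g^{2\kappa+1}\legendre{-1}{g}$ and the $4^{s_1+s_2}$ seen in \eqref{main term 3}, rather than any subtlety in the analysis; in particular one must be careful with the complex-power identities of Lemma~\ref{lemma. complex power st} when rescaling $y_i$ by different constants and when the argument of the denominator crosses the relevant sectors, and with tracking which of $J_1$ or $J_2$ appears for $m>0$ versus $m<0$. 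Once the phases are reconciled, the computation is parallel to the small-cell case.
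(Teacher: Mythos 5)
There is a genuine gap in your starting point: for lower-triangular $\gamma=\begin{psmallmatrix}a&\\ m&d\end{psmallmatrix}\in\overline{L}_n$ we have $\gamma z=\frac{az}{mz+d}$ and $j(\gamma,z)=mz+d$, so
\[
R_\gamma(iy_1,iy_2)=j_\gamma(iy_1)^{2\kappa}\bigl(\gamma(iy_1)+iy_2\bigr)^{\kappa}
=t_\gamma^{2\kappa}\bigl(-my_1y_2+i(ay_1+dy_2)\bigr)^{\kappa},
\]
not $t_\gamma^{2\kappa}(iay_1+idy_2+m)^{\kappa}$ as you wrote. The formula you used is the one for the upper-triangular matrices in $\overline{B}_n$, and the $-my_1y_2$ cross term is precisely what makes the dual cell genuinely different from the small cell. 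Your subsequent change of variables inherits the problem: you propose a simple rescaling $y_i\mapsto c_iy_i$, which preserves the exponents $s_i+\kappa/2$ in the numerator, yet Lemma~\ref{lem. special integrals in the dual orbital integral} requires the reflected exponents $-s_i+\kappa/2$ and would produce the negative power $m^{-(s_1+s_2)}$ that yields $L(1-s_1-s_2,\chi_{g_*})$. With your denominator, rescaling gives $y_1^{s_1+\kappa/2}y_2^{s_2+\kappa/2}$ and a \emph{positive} power $m^{s_1+s_2}$, so the $m$-sum would diverge rather than produce the reflected $L$-value; the claim that ``the $m$-sum now converges for $\Re(s_1+s_2)>1$'' does not follow from your setup.

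The correct mechanism, and the one the paper uses, is: with the denominator $\bigl(-my_1y_2+i(ay_1+dy_2)\bigr)^{\kappa}$, factor out $y_1y_2$ to bring it to the form $m^{\kappa}(\pm y_1y_2+i(dy_2/m+ay_1/m))^{\kappa}$, then apply the \emph{inversion} $y_1\mapsto d/(my_1)$, $y_2\mapsto a/(my_2)$. The inversion flips the exponents to $-s_i+\kappa/2$ (matching the hypotheses of Lemma~\ref{lem. special integrals in the dual orbital integral}), produces the factor $d^{s_1}a^{s_2}/m^{s_1+s_2}$, and collapses the double integral to $J_1$ or $J_2$ from that lemma depending on $\operatorname{sgn}(m)$. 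The remaining bookkeeping with $t_{a,d,\pm m}$ from Lemma~\ref{lem. t value in the dual orbital integral} and the Dirichlet $L$-function functional equation then combine the two phases $e^{\pm\frac{\pi i}{2}(s_1+s_2)}$ into $\cos(\tfrac{\pi}{2}(s_1+s_2-\delta))$, which, via the functional equation for $L(s,\chi_{g_*})$, is what converts $L(s_1+s_2,\chi_{g_*})$ into $L(1-s_1-s_2,\chi_{g_*})g_*^{1/2-s_1-s_2}$; the $4^{s_1+s_2}$ comes from $m\equiv 0\Mod 4$ as you anticipated. Your character-sum and Euler-product bookkeeping at the end is fine once the analytic core is fixed, but the analytic core must go through the inversion, not a rescaling.
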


\begin{proof}
Recall \eqref{dual orbit of RTF} which says
\[ J_{\du}(\textbf{s},n)= 2C_{\kappa}^{-1}n^{\kappa-1}\int_0^{\infty}\int_0^{\infty}\sum_{\gamma\in \overline{L}_{n}}\frac{y_1^{s_1+\kappa/2}y_2^{s_2+\kappa/2}}{R_{\gamma}(iy_1,iy_2)}\frac{\,dy_1}{y_1}\frac{\,dy_2}{y_2}.\]
By the definition of $\overline{L}_n$ and $R_{\gamma}(iy_1,iy_2),$ this is
\[J_{\du}(\textbf{s},n)= 2C_{\kappa}^{-1}n^{\kappa-1}\int_0^{\infty}\int_0^{\infty}\sum_{\substack{ad=n\\ a,d>0}}\sum_{\substack{m\in\bZ\\ m\neq0,4|m\\ \gcd(a,d,m)=1}}\frac{y_1^{s_1+\kappa/2}y_2^{s_2+\kappa/2}t_{a,d,m}^{-2\kappa}}{(-my_1y_2+i(dy_2+ay_1))^{\kappa}}\frac{\,dy_1}{y_1}\frac{\,dy_2}{y_2}.\]
We separate the terms by $m$ positive or negative. For the negative terms, we perform the change of variable $m\mapsto -m$ giving
\begin{flalign*}
	J_{\du}(\textbf{s},n)&=2C_{\kappa}^{-1}n^{\kappa-1}\int_0^{\infty}\int_0^{\infty}\sum_{\substack{ad=n\\ a,d>0}}\sum_{\substack{m\geq1,4|m\\ \gcd(a,d,m)=1}}\frac{y_1^{s_1+\kappa/2}y_2^{s_2+\kappa/2}t_{a,d,-m}^{-2\kappa}}{m^{\kappa}\left(y_1y_2+i\left(\frac{dy_2}{m}+\frac{ay_1}{m}\right)\right)^{\kappa}}\frac{\,dy_1}{y_1}\frac{\,dy_2}{y_2} \\
    &\phantom{=}+2C_{\kappa}^{-1}n^{\kappa-1}\int_0^{\infty}\int_0^{\infty}\sum_{\substack{ad=n\\ a,d>0}}\sum_{\substack{m\geq1,4|m\\ \gcd(a,d,m)=1}}\frac{y_1^{s_1+\kappa/2}y_2^{s_2+\kappa/2}t_{a,d,m}^{-2\kappa}}{m^{\kappa}\left(-y_1y_2+i\left(\frac{dy_2}{m}+\frac{ay_1}{m}\right)\right)^{\kappa}}\frac{\,dy_1}{y_1}\frac{\,dy_2}{y_2}.
\end{flalign*}

We then factor out $y_1y_2$ and apply the change of variable $y_1\mapsto\frac{d}{my_1}$ and $y_2\mapsto \frac{a}{my_2}$ to get
\begin{flalign*}
	J_{\du}(\textbf{s},n)&=2C_{\kappa}^{-1}n^{\frac{\kappa}{2}-1}\int_0^{\infty}\int_0^{\infty}\frac{y_1^{-s_1+\kappa/2}y_2^{-s_2+\kappa/2}}{(1+i(y_1+y_2))^{\kappa}}\frac{\,dy_1}{y_1}\frac{\,dy_2}{y_2}\sum_{\substack{ad=n\\ a,d>0}}\sum_{\substack{m\geq1,4|m\\ \gcd(a,d,m)=1}}\frac{t_{a,d,-m}^{-2\kappa}d^{s_1}a^{s_2}}{m^{s_1+s_2}} \\
    &\phantom{=}+2C_{\kappa}^{-1}n^{\frac{\kappa}{2}-1}\int_0^{\infty}\int_0^{\infty}\frac{y_1^{-s_1+\kappa/2}y_2^{-s_2+\kappa/2}}{(-1+i(y_1+y_2))^{\kappa}}\frac{\,dy_1}{y_1}\frac{\,dy_2}{y_2}\sum_{\substack{ad=n\\ a,d>0}}\sum_{\substack{m\geq1,4|m\\ \gcd(a,d,m)=1}}\frac{t_{a,d,m}^{-2\kappa}d^{s_1}a^{s_2}}{m^{s_1+s_2}}.
\end{flalign*}
The remaining two integrals are $J_1(s_1,s_2,\kappa)$ and $J_2(s_1,s_2,\kappa)$ respectively as given in Lemma \ref{lem. special integrals in the dual orbital integral}. It follows that
\begin{flalign*}
	J_{\du}(\textbf{s},n)&=\frac{2n^{\frac{\kappa}{2}-1}}{i^{\kappa}C_{\kappa}}\frac{\Gamma(-s_1+\kappa/2)\Gamma(-s_2+\kappa/2)\Gamma(s_1+s_2)}{\Gamma(\kappa)}\\
	&\phantom{=}\times\left(e^{\frac{\pi i}{2}(s_1+s_2)}\sum_{\substack{ad=n\\ a,d>0}}\sum_{\substack{m\geq1,4|m\\ \gcd(a,d,m)=1}}\frac{t_{a,d,-m}^{-2\kappa}d^{s_1}a^{s_2}}{m^{s_1+s_2}}+e^{-\frac{\pi i}{2}(s_1+s_2)}\sum_{\substack{ad=n\\ a,d>0}}\sum_{\substack{m\geq1,4|m\\ \gcd(a,d,m)=1}}\frac{t_{a,d,m}^{-2\kappa}d^{s_1}a^{s_2}}{m^{s_1+s_2}}\right).
\end{flalign*}
Next, we investigate the series:
\[I^{\pm}(s_1,s_2;n):=\sum_{\substack{ad=n\\ a,d>0}}\sum_{\substack{m\geq1,4|m\\ \gcd(a,d,m)=1}}\frac{t_{a,d,\mp m}^{-2\kappa}d^{s_1}a^{s_2}}{m^{s_1+s_2}}.\]
It suffices to only study $I^{+}(s_1,s_2;n)$ as the argument for $I^{-}(s_1,s_2;n)$ is similar. Set $g=\gcd(a,d)$ and substitute $a$ (resp. $d$) by $ag$ (resp. $dg$) to obtain
\[I^+(s_1,s_2;n)=\sum_{\substack{adg^2=n\\ \gcd(a,d)=1}}\sum_{\substack{m\geq1\\ \gcd(m,g)=1}}\frac{d^{s_1}a^{s_2}g^{s_1+s_2}\varepsilon_g^{2\kappa}\legendre{-m}{g}}{4^{s_1+s_2}m^{s_1+s_2}}.\]
Next, we set $g=g_*(g')^2=p_1\cdots p_s(g')^2.$ Then $\chi_g$ is induced from the character $\chi_{g_*}$ with conductor $g_{*}.$ It follows that
\[\sum_{\substack{m\geq1\\ \gcd(m,g)=1}}\frac{\legendre{m}{g}}{m^{s_1+s_2}}=L(s_1+s_2,\chi_{g_*})\prod_{p|g_0}\left(1-\frac{\chi_{g_*}(p)}{p^{s_1+s_2}}\right),\]
and hence
\[I^+(s_1,s_2;n)=\frac{1}{4^{s_1+s_2}}\sum_{\substack{adg^2=n\\ \gcd(a,d)=1}}L(s_1+s_2,\chi_{g_*})d^{s_1}a^{s_2}g^{s_1+s_2}\varepsilon_g^{2\kappa}\legendre{-1}{g}\prod_{p|g_0}\left(1-\frac{\chi_{g_*}(p)}{p^{s_1+s_2}}\right).\]
A similar argument will show
\[I^-(s_1,s_2;n)=\frac{1}{4^{s_1+s_2}}\sum_{\substack{adg^2=n\\ \gcd(a,d)=1}}L(s_1+s_2,\chi_{g_*})d^{s_1}a^{s_2}g^{s_1+s_2}\varepsilon_g^{2\kappa}\prod_{p|g_0}\left(1-\frac{\chi_{g_*}(p)}{p^{s_1+s_2}}\right).\]
Inserting $I^{\pm}(s_1,s_2;n)$ into $J_{\du}(\textbf{s},n)$ yields
\begin{flalign*}
	J_{\du}(\textbf{s},n)&=\frac{2n^{\frac{\kappa}{2}-1}}{i^{\kappa}C_{\kappa}}\frac{\Gamma(-s_1+\kappa/2)\Gamma(-s_2+\kappa/2)}{4^{s_1+s_2}\Gamma(\kappa)}\sum_{\substack{adg^2=n\\ \gcd(a,d)=1}}d^{s_1}a^{s_2}g^{s_1+s_2}\varepsilon_g^{2\kappa}\legendre{-1}{g}\prod_{p|g_0}\left(1-\frac{\chi_{g_*}(p)}{p^{s_1+s_2}}\right)\\
	&\hspace{40mm}\times\Gamma(s_1+s_2)\left(e^{\frac{\pi i}{2}(s_1+s_2)}+\legendre{-1}{g}e^{-\frac{\pi i}{2}(s_1+s_2)}\right)L(s_1+s_2,\chi_{g_*})\\
	&=\frac{2n^{\frac{\kappa}{2}-1}}{i^{\kappa}C_{\kappa}}\frac{\Gamma(-s_1+\kappa/2)\Gamma(-s_2+\kappa/2)}{4^{s_1+s_2}\Gamma(\kappa)}\sum_{\substack{adg^2=n\\ \gcd(a,d)=1}}d^{s_1}a^{s_2}g^{s_1+s_2}\varepsilon_g^{2\kappa+1}\legendre{-1}{g}\prod_{p|g_0}\left(1-\frac{\chi_{g_*}(p)}{p^{s_1+s_2}}\right)\\
	&\hspace{40mm}\times\Gamma(s_1+s_2)2\cos\left(\frac{\pi}{2}(s_1+s_2-\delta)\right)L(s_1+s_2,\chi_{g_*}),
\end{flalign*}
where the second equality is due to the fact $e^{\frac{\pi i}{2}(s_1+s_2)}+\legendre{-1}{g}e^{-\frac{\pi i}{2}(s_1+s_2)}=2\varepsilon_g\cos\left(\frac{\pi}{2}(s_1+s_2-\delta)\right),$ with $\delta=\frac{1-\chi_{g_*}(-1)}{2}.$ As $\chi_{g_*}$ is a real primitive character, the functional equation associated to its Dirichlet $L$-function is
\[L(1-s,\chi_{g_*})=\pi^{-s}2^{1-s}g_*^{s-\frac{1}{2}}\Gamma(s)\cos\left(\frac{\pi (s-\delta)}{2}\right)L(s,\chi_{g_*}).\]
Inserting this into $J_{\du}(\textbf{s},n)$ allows us to conclude
\begin{flalign*}
	J_{\du}(\textbf{s},n)
	&=\frac{2(2\pi)^{s_1+s_2}n^{\frac{\kappa}{2}-1}}{i^{\kappa}C_{\kappa}4^{s_1+s_2}}\frac{\Gamma(-s_1+\kappa/2)\Gamma(-s_2+\kappa/2)}{\Gamma(\kappa)}\\
	&\phantom{=}\times \sum_{\substack{adg^2=n\\ \gcd(a,d)=1}}L(1-s_1-s_2,\chi_{g_*})g_*^{\frac{1}{2}-s_1-s_2}d^{s_1}a^{s_2}g^{s_1+s_2}\varepsilon_g^{2\kappa+1}\legendre{-1}{g}\prod_{p|g_0}\left(1-\frac{\chi_{g_*}(p)}{p^{s_1+s_2}}\right).
\end{flalign*}

\end{proof}
\begin{remark}\label{rem, dual orbit n=1}
	When $n=1$, we obtain:
\[
 J_{\du}(\textbf{s},1)=\frac{2(2\pi)^{s_1+s_2}}{i^{\kappa}C_{\kappa}4^{s_1+s_2}}\frac{\Gamma(-s_1+\kappa/2)\Gamma(-s_2+\kappa/2)}{\Gamma(\kappa)}\zeta(1-s_1-s_2)
\]
\end{remark}

Similar to Corollary \ref{cor. small orbital integral estimation}, we separate the sum into those terms which have a pole at $(0,0)$ and those which are holomorphic. In particular we prove the following:
\begin{cor}\label{cor. dual orbital integral estimation}
 Let $n$ be an odd square integer. Then 
 \[J_{\du}(\textbf{s},n)=J_{\du}^1(\textbf{s},n)+J_{\du}^2(\textbf{s},n),\]
 where $J_{\du}^1(\textbf{s},n)$ is given by
 \[J_{\du}^1(\textbf{s},n)=\frac{2n^{\frac{\kappa}{2}-1}}{i^{\kappa}C_{\kappa}}\frac{\pi^{s_1+s_2}\Gamma\left(-s_1+\frac{\kappa}{2}\right)\Gamma\left(-s_2+\frac{\kappa}{2}\right)}{2^{s_1+s_2}\Gamma(\kappa)}\zeta(1-s_1-s_2)\sum_{\substack{g^4ad=n\\\gcd(a,d)=1}}{a^{s_1}d^{s_2}g^{2s_1+2s_2}}\prod_{p|g}\left(1-\frac{1}{p^{s_1+s_2}}\right),\]
  and $J_{\du}^2(\textbf{s},n)$ is entire in $\bC^2$ satisfying
  \[J_{\du}^2(\mathbf{0},n)\ll_{\varepsilon} \frac{n^{\frac{\kappa}{2}-\frac{3}{4}+\varepsilon}}{|C_{\kappa}|}B\left(\frac{\kappa}{2},\frac{\kappa}{2}\right). \]
\end{cor}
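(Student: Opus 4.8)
The plan is to follow the template of the proof of Corollary~\ref{cor. small orbital integral estimation} essentially verbatim, now starting from the closed-form expression for $J_{\du}(\textbf{s},n)$ in Proposition~\ref{prop. dual cell}. The Dirichlet $L$-function $L(1-s_1-s_2,\chi_{g_*})$ appearing there has a pole at $\textbf{s}=\textbf{0}$ exactly when $\chi_{g_*}$ is the principal character, which occurs if and only if $g$ is a perfect square (equivalently $g_*=1$), in which case $L(1-s_1-s_2,\chi_{g_*})=\zeta(1-s_1-s_2)$. Accordingly I would split the sum over $adg^2=n$, $\gcd(a,d)=1$ into $J_{\du}^1(\textbf{s},n)$, the terms with $g$ a square, and $J_{\du}^2(\textbf{s},n)$, the terms with $g$ not a square.

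For the explicit shape of $J_{\du}^1$: when $g$ is a square one has $\varepsilon_g=\legendre{-1}{g}=1$, $\chi_{g_*}$ trivial, $g_*=1$, and $g_0=\rad(g)$, so the Euler product over $p\mid g_0$ collapses to $\prod_{p\mid g}\bigl(1-p^{-(s_1+s_2)}\bigr)$. Substituting $g\mapsto g^2$ turns $adg^2=n$ into $adg^4=n$, turns $g^{s_1+s_2}$ into $g^{2s_1+2s_2}$, and leaves the radical product unchanged; combining the prefactors via $(2\pi)^{s_1+s_2}/4^{s_1+s_2}=\pi^{s_1+s_2}/2^{s_1+s_2}$ and relabeling $a\leftrightarrow d$ in the symmetric sum produces exactly the stated formula for $J_{\du}^1(\textbf{s},n)$. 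Since all poles of $J_{\du}(\textbf{s},n)$ in the region of interest come from this trivial-character $L$-value, $J_{\du}^2(\textbf{s},n)$ inherits analyticity there from the entirety of $L(1-s_1-s_2,\chi_{g_*})$ for $\chi_{g_*}$ nontrivial, exactly as in the small-cell case.

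It remains to bound $J_{\du}^2(\textbf{0},n)$. Setting $\textbf{s}=\textbf{0}$, the analytic prefactor becomes $\tfrac{2n^{\kappa/2-1}}{i^\kappa C_\kappa}B\!\left(\tfrac{\kappa}{2},\tfrac{\kappa}{2}\right)$, since the $(2\pi)^{s_1+s_2}$ and $4^{s_1+s_2}$ factors are $1$ and $\Gamma(\kappa/2)^2/\Gamma(\kappa)=B(\kappa/2,\kappa/2)$. In the remaining sum over non-square $g$ with $adg^2=n$: the unit $\varepsilon_g^{2\kappa+1}\legendre{-1}{g}$ has modulus $1$; the monomial $d^{s_1}a^{s_2}g^{s_1+s_2}$ equals $1$; the Euler product over $p\mid g_0$ is $O(2^{\omega(g)})=O_\varepsilon(n^\varepsilon)$; the factor $g_*^{1/2-s_1-s_2}$ becomes $g_*^{1/2}\le g^{1/2}\le n^{1/4}$ because $g^2\mid n$; and $L(1,\chi_{g_*})\ll\log(g_*+2)\ll_\varepsilon n^\varepsilon$ for a real primitive character of conductor $\ll n^{1/2}$. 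Since the number of triples $(a,d,g)$ with $adg^2=n$ is $O_\varepsilon(n^\varepsilon)$, the entire sum is $O_\varepsilon(n^{1/4+\varepsilon})$, and multiplying by the prefactor gives $J_{\du}^2(\textbf{0},n)\ll_\varepsilon \tfrac{n^{\kappa/2-3/4+\varepsilon}}{|C_\kappa|}B\!\left(\tfrac{\kappa}{2},\tfrac{\kappa}{2}\right)$.

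I do not anticipate a genuine obstacle: the argument is structurally identical to Corollary~\ref{cor. small orbital integral estimation}, with the functional-equation-induced $L(1-s_1-s_2,\chi_{g_*})$ in place of $L(1+s_1+s_2,\chi_{g_*})$. The only external input is the classical bound $L(1,\chi_{g_*})\ll\log(g_*+2)$ for real primitive characters, and the mildly fiddly step is keeping the $g\mapsto g^2$ reindexing and the $a\leftrightarrow d$ symmetrization consistent so that the displayed formula for $J_{\du}^1$ comes out exactly as stated.
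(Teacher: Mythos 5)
Your proposal is correct and follows the same approach as the paper, which explicitly states its proof is "identical to that of Corollary \ref{cor. small orbital integral estimation}" and only sketches the split by $g=\square$ versus $g\neq\square$. Your write-up simply spells out the omitted bookkeeping (the $g\mapsto g^2$ reindexing, $g_0=\rad(g)$ when $g_*=1$, the $a\leftrightarrow d$ relabeling, $g_*^{1/2}\le n^{1/4}$, and absorbing $L(1,\chi_{g_*})$ and the divisor count into $n^\varepsilon$), all of which matches the template of the small-cell case.
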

\begin{proof}
The proof is identical to that of Corollary \ref{cor. small orbital integral estimation} and we only sketch the proof. Let $g$ be an integer and denote by $g_*$ the squarefree part of $g.$ We set
\begin{flalign*}
	J_{\du}^1(\textbf{s},n)
	&=\frac{2(2\pi)^{s_1+s_2}n^{\frac{\kappa}{2}-1}}{i^{\kappa}C_{\kappa}4^{s_1+s_2}}\frac{\Gamma(-s_1+\kappa/2)\Gamma(-s_2+\kappa/2)}{\Gamma(\kappa)}\\
	&\phantom{=}\times \sum_{\substack{adg^2=n\\ \gcd(a,d)=1\\g=\square}}L(1-s_1-s_2,\chi_{g_*})g_*^{\frac{1}{2}-s_1-s_2}d^{s_1}a^{s_2}g^{s_1+s_2}\varepsilon_g^{2\kappa+1}\legendre{-1}{g}\prod_{p|g_0}\left(1-\frac{\chi_{g_*}(p)}{p^{s_1+s_2}}\right).
\end{flalign*}
and
\begin{equation}\label{eq. entire term in the dual orbit}
\begin{split}
J_{\du}^2(\textbf{s},n)
	&=\frac{2(2\pi)^{s_1+s_2}n^{\frac{\kappa}{2}-1}}{i^{\kappa}C_{\kappa}4^{s_1+s_2}}\frac{\Gamma(-s_1+\kappa/2)\Gamma(-s_2+\kappa/2)}{\Gamma(\kappa)}\\
	&\phantom{=}\times \sum_{\substack{adg^2=n\\ \gcd(a,d)=1\\g\neq\square}}L(1-s_1-s_2,\chi_{g_*})g_*^{\frac{1}{2}-s_1-s_2}d^{s_1}a^{s_2}g^{s_1+s_2}\varepsilon_g^{2\kappa+1}\legendre{-1}{g}\prod_{p|g_0}\left(1-\frac{\chi_{g_*}(p)}{p^{s_1+s_2}}\right).
\end{split}	
\end{equation}
\end{proof}

\subsection{The Singular Orbital Integrals: A Summary}\label{subsec. summary of singular}
Recall the definition of $J_{\si}(\textbf{s},n)$, Corollary  \ref{cor. small orbital integral estimation}, and Corollary \ref{cor. dual orbital integral estimation}. We write
\begin{flalign*}
J_{\si}(\textbf{s},n)=&J_{\sm}^2(\textbf{s},n)+J_{\du}^2(\textbf{s},n)+\frac{2n^{\frac{\kappa}{2}-1}}{i^{\kappa}C_{\kappa}2^{s_1+s_2}\Gamma(\kappa)}\left(K(s_1,s_2;n)+K(-s_1,-s_2;n)\right),
\end{flalign*}
where
\[K(s_1,s_2;n)=\frac{\Gamma(s_1+\kappa/2)\Gamma(s_2+\kappa/2)\zeta(1+s_1+s_2)}{\pi^{s_1+s_2}}\sum_{\substack{g^4ad=n\\\gcd(a,d)=1}}\frac{1}{a^{s_1}d^{s_2}g^{2s_1+2s_2}}\prod_{p|g}(1-p^{s_1+s_2}),\]
and $J_{\sm}^2(\textbf{s},n)$ (resp. $J_{\du}^2(\textbf{s},n)$) is defined in \eqref{eq. entire term in the small orbit} (resp. \eqref{eq. entire term in the dual orbit}). Now $J_{\si}(\textbf{s},n)$ admits meromorphic continuation to $\bC^2$ by Corollary \ref{cor. small orbital integral estimation} and Corollary \ref{cor. dual orbital integral estimation}. Moreover, $J_{\si}(\textbf{s},n)$ has a singularity at $(s_1,s_2)=(0,0).$ In the following, we will show that this singularity is removable. To this end, we note that $K(s_1,s_2;n)$ satisfies the conditions in \cite[Lemma 2.5.2]{ConreyFarmerKeatingRubinsteinSnaith2005}, which implies the following: 

\begin{prop}\label{prop. main term for the general case}
Let $n$ be an odd square integer. Then for $\textbf{s}\in \bC^2,$
\begin{align}
    \begin{split}\label{eq. singular orbital integral general form}
	J_{\si}(\textbf{s},n)&=J_{\sm}^2(\textbf{s},n)+J_{\du}^2(\textbf{s},n)\\
    &\phantom{=}-\frac{2n^{\frac{\kappa}{2}-1}}{i^{\kappa}C_{\kappa}2^{s_1+s_2}\Gamma(\kappa)}\frac{1}{(2\pi i)^2}\oint\displaylimits_{|z_1|=1}\oint\displaylimits_{|z_2|=1}\frac{(z_1^2-z_2^2)^2(z_1z_2+s_1s_2)K(z_1,z_2;n)}{\prod_{i=1}^2\prod_{j=1}^2(z_i-s_j)(z_i+s_j)}\,dz_1\,dz_2.
    \end{split}
\end{align}
where $J_{\sm}^2(\textbf{s},n)$ (resp. $J_{\du}^2(\textbf{s},n)$) is defined in \eqref{eq. entire term in the small orbit} (resp. \eqref{eq. entire term in the dual orbit}).
\end{prop}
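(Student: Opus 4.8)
The plan is to deduce \eqref{eq. singular orbital integral general form} from the identity recorded at the start of \S\ref{subsec. summary of singular},
\[
J_{\si}(\textbf{s},n)=J_{\sm}^2(\textbf{s},n)+J_{\du}^2(\textbf{s},n)+\frac{2n^{\frac{\kappa}{2}-1}}{i^{\kappa}C_{\kappa}2^{s_1+s_2}\Gamma(\kappa)}\bigl(K(s_1,s_2;n)+K(-s_1,-s_2;n)\bigr),
\]
so that everything reduces to the purely analytic identity
\[
K(s_1,s_2;n)+K(-s_1,-s_2;n)=-\frac{1}{(2\pi i)^2}\oint_{|z_1|=1}\oint_{|z_2|=1}\frac{(z_1^2-z_2^2)^2(z_1z_2+s_1s_2)\,K(z_1,z_2;n)}{\prod_{i=1}^2\prod_{j=1}^2(z_i-s_j)(z_i+s_j)}\,dz_1\,dz_2,
\]
which I would first establish for $|s_1|,|s_2|<1$ and then continue.

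For the analytic identity I would invoke \cite[Lemma 2.5.2]{ConreyFarmerKeatingRubinsteinSnaith2005}, after checking its hypotheses for $K(\cdot,\cdot;n)$. Writing $K(z_1,z_2;n)=\zeta(1+z_1+z_2)\,G(z_1,z_2;n)$ with
\[
G(z_1,z_2;n)=\frac{\Gamma(z_1+\kappa/2)\Gamma(z_2+\kappa/2)}{\pi^{z_1+z_2}}\sum_{\substack{g^4ad=n\\\gcd(a,d)=1}}\frac{1}{a^{z_1}d^{z_2}g^{2z_1+2z_2}}\prod_{p\mid g}\bigl(1-p^{z_1+z_2}\bigr),
\]
one sees that for $\kappa>4$ the poles of $\Gamma(z_i+\kappa/2)$ lie in $\Re z_i\le-\kappa/2<-2$, so $G$ is holomorphic on $\{|z_1|\le1\}\times\{|z_2|\le1\}$; hence on this polydisc $K$ is meromorphic with its only polar divisor the simple pole along $z_1+z_2=0$ coming from $\zeta(1+z_1+z_2)$. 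Moreover $K$ is symmetric under $z_1\leftrightarrow z_2$ (interchange $a$ and $d$ in the sum). These are exactly the conditions under which \cite[Lemma 2.5.2]{ConreyFarmerKeatingRubinsteinSnaith2005} rewrites the sum over the sign-swap $K(s_1,s_2;n)+K(-s_1,-s_2;n)$ as the displayed double contour integral.

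As an independent check — and to see why the particular numerator $(z_1^2-z_2^2)^2(z_1z_2+s_1s_2)$ is forced — I would evaluate the right-hand side directly by residues. The denominator factors as $(z_1^2-s_1^2)(z_1^2-s_2^2)(z_2^2-s_1^2)(z_2^2-s_2^2)$, while $(z_1^2-z_2^2)^2=(z_1-z_2)^2(z_1+z_2)^2$ cancels the simple pole of $\zeta(1+z_1+z_2)$; so the numerator is holomorphic on $\{|z_1|\le1\}\times\{|z_2|\le1\}$, and for $|s_1|,|s_2|<1$ with $s_1\ne\pm s_2$ the only interior poles of the integrand are at $z_i=\pm s_j$. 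Integrating in $z_1$ picks up residues at $z_1=\pm s_1,\pm s_2$; taking the residue at $z_1=\epsilon_1 s_{j_1}$ introduces a numerator factor $(s_{j_1}^2-z_2^2)^2$, which cancels both the factor $(z_2^2-s_{j_1}^2)$ in the denominator and the pole of $K(\epsilon_1 s_{j_1},z_2;n)$ at $z_2=-\epsilon_1 s_{j_1}$, leaving $z_2$-poles only at $z_2=\pm s_{j_2}$ with $j_2\ne j_1$. For those the factor $z_1z_2+s_1s_2$ evaluates to $s_1s_2(1+\epsilon_1\epsilon_2)$, which vanishes unless $\epsilon_1=\epsilon_2$. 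Hence only four double residues survive, those with $\{j_1,j_2\}=\{1,2\}$ and $\epsilon_1=\epsilon_2=\pm1$; a short computation shows each equals $-\tfrac12 K(\pm s_1,\pm s_2;n)$, and the symmetry of $K$ makes their total $-\bigl(K(s_1,s_2;n)+K(-s_1,-s_2;n)\bigr)$, which is the claimed identity.

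Finally I would pass to all $\textbf{s}$: the identity holds on $\{|s_1|<1\}\times\{|s_2|<1\}$, a neighborhood of $\textbf{0}$, and both sides of \eqref{eq. singular orbital integral general form} admit meromorphic continuation — the left by Corollaries \ref{cor. small orbital integral estimation} and \ref{cor. dual orbital integral estimation}, the right because $J_{\sm}^2$ and $J_{\du}^2$ are entire and the contour integral is holomorphic in $\textbf{s}$ for $|s_1|,|s_2|<1$ — so they agree by analytic continuation; in particular the apparent singularity of $J_{\si}(\textbf{s},n)$ at $\textbf{s}=\textbf{0}$ is removable. The step I expect to be the main obstacle is not conceptually deep: it is the careful bookkeeping of the residue cancellations (equivalently, matching the precise hypotheses and output of \cite[Lemma 2.5.2]{ConreyFarmerKeatingRubinsteinSnaith2005}), the essential input being the double vanishing mechanism above that simultaneously kills the $j_1=j_2$ terms via $(z_1^2-z_2^2)^2$ and the $\epsilon_1\ne\epsilon_2$ terms via $z_1z_2+s_1s_2$, thereby collapsing the integral to exactly the two wanted summands.
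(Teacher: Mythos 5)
Your proposal is correct and follows the same route as the paper: the paper's entire proof is the remark just before the proposition that $K(s_1,s_2;n)$ satisfies the hypotheses of \cite[Lemma 2.5.2]{ConreyFarmerKeatingRubinsteinSnaith2005}, which gives the contour-integral form of $K(s_1,s_2;n)+K(-s_1,-s_2;n)$, and this is exactly the reduction you make. Your additional residue computation — showing the double vanishing from $(z_1^2-z_2^2)^2$ and $z_1z_2+s_1s_2$ collapses the integral to $-\bigl(K(s_1,s_2;n)+K(-s_1,-s_2;n)\bigr)$ — is a correct and worthwhile verification that the paper leaves to the cited reference.
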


From Proposition \ref{prop. main term for the general case} we derive the following corollary:
\begin{cor}\label{cor. estimation of the singular orbital integral}
	Let $n$ be an odd square integer. Then for any $\varepsilon>0,$
	\[J_{\si}(\textbf{0},n)\ll_{\varepsilon} n^{\frac{\kappa}{2}-\frac{3}{4}+\varepsilon}\frac{\kappa^{\varepsilon}B(\kappa/2,\kappa/2)}{|C_{\kappa}|}.\]	
\end{cor}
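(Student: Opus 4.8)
The plan is to feed the closed formula of Proposition~\ref{prop. main term for the general case} into $\mathbf{s}=\mathbf{0}$ and estimate the three resulting pieces separately. At $\mathbf{s}=\mathbf{0}$ one has $2^{s_1+s_2}=1$, $z_1z_2+s_1s_2=z_1z_2$ and $\prod_{i,j}(z_i-s_j)(z_i+s_j)=z_1^4z_2^4$, so, after cancelling one factor $z_1z_2$,
\[
J_{\si}(\mathbf{0},n)=J_{\sm}^2(\mathbf{0},n)+J_{\du}^2(\mathbf{0},n)-\frac{2n^{\frac{\kappa}{2}-1}}{i^{\kappa}C_{\kappa}\Gamma(\kappa)}\,\mathcal{I},
\]
where
\[
\mathcal{I}:=\frac{1}{(2\pi i)^2}\oint_{|z_1|=1}\oint_{|z_2|=1}\frac{(z_1^2-z_2^2)^2\,K(z_1,z_2;n)}{z_1^3z_2^3}\,dz_1\,dz_2.
\]
By Corollary~\ref{cor. small orbital integral estimation} and Corollary~\ref{cor. dual orbital integral estimation} the first two summands are each $\ll_\varepsilon n^{\frac{\kappa}{2}-\frac34+\varepsilon}B(\kappa/2,\kappa/2)/|C_\kappa|$, which already has the desired shape; so everything reduces to bounding $\mathcal{I}$.

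The structural observation I would rely on is that $(z_1^2-z_2^2)^2=(z_1-z_2)^2(z_1+z_2)^2$, so the factor $(z_1+z_2)^2$ cancels the simple pole of $\zeta(1+z_1+z_2)$ on the line $z_1+z_2=0$. Since $\Gamma(z_i+\kappa/2)$ is holomorphic for $\Re z_i>-\kappa/2$ and the finite sum $S(z_1,z_2;n):=\sum_{g^4ad=n,\ \gcd(a,d)=1}a^{-z_1}d^{-z_2}g^{-2z_1-2z_2}\prod_{p\mid g}(1-p^{z_1+z_2})$ is entire, the integrand of $\mathcal{I}$ is holomorphic on $\{0<|z_1|<\kappa/2\}\times\{0<|z_2|<\kappa/2\}$, its only singularities being the poles at $z_1=0$ and $z_2=0$. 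By Cauchy's theorem I may therefore replace the unit circles by circles $|z_i|=\rho$ for any $0<\rho<\kappa/2$, and the point is to take $\rho$ \emph{shrinking} with $\kappa$ and $n$, namely $\rho:=(A\log(e\kappa n))^{-1}$ for a suitable absolute constant $A$; with this choice $(\kappa/2)^{\rho}$, $\pi^{\rho}$ and $n^{\rho}$ are all $O(1)$, while $\rho^{-1}\ll_\varepsilon(\kappa n)^\varepsilon$.

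On the circles $|z_1|=|z_2|=\rho$ I would combine: the Laurent expansion $\zeta(1+w)=w^{-1}+O(1)$ near $w=0$, which gives $\bigl|(z_1^2-z_2^2)^2\zeta(1+z_1+z_2)\bigr|=|z_1-z_2|^2\,\bigl|(z_1+z_2)^2\zeta(1+z_1+z_2)\bigr|\ll\rho^3$; Stirling in the form $\Gamma(z+\kappa/2)=\Gamma(\kappa/2)(\kappa/2)^{z}\bigl(1+O(1/\kappa)\bigr)$ uniformly for $|z|\le1$, which extracts the factor $\Gamma(\kappa/2)^2$ and leaves $|(\kappa/2)^{z_1+z_2}|\le\kappa^{2\rho}=O(1)$; the trivial bound $\pi^{-\Re(z_1+z_2)}\le\pi^{2\rho}=O(1)$; and $|S(z_1,z_2;n)|\ll_\varepsilon n^\varepsilon$, which follows from $|a^{-z_1}d^{-z_2}g^{-2z_1-2z_2}|\le(adg^4)^{\rho}=n^{\rho}=O(1)$, from $g^4\mid n$ (so $g\le n^{1/4}$ and hence $\prod_{p\mid g}(1+p^{2\rho})\ll_\varepsilon g^{2\rho+\varepsilon}\ll_\varepsilon n^\varepsilon$), and from the count $\#\{(g,a,d):g^4ad=n\}\ll_\varepsilon n^\varepsilon$. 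Multiplying these bounds and dividing by $|z_1^3z_2^3|=\rho^6$ shows the integrand is $\ll_\varepsilon\Gamma(\kappa/2)^2 n^\varepsilon\rho^{-3}$ on $|z_i|=\rho$, and together with the arc-length factor $\rho^2$ this yields $|\mathcal{I}|\ll_\varepsilon\Gamma(\kappa/2)^2 n^\varepsilon\rho^{-1}\ll_\varepsilon\Gamma(\kappa/2)^2(\kappa n)^\varepsilon$. Substituting back,
\[
\Bigl|\frac{2n^{\frac{\kappa}{2}-1}}{i^{\kappa}C_{\kappa}\Gamma(\kappa)}\,\mathcal{I}\Bigr|\ll_\varepsilon\frac{n^{\frac{\kappa}{2}-1+\varepsilon}\kappa^\varepsilon}{|C_\kappa|}\cdot\frac{\Gamma(\kappa/2)^2}{\Gamma(\kappa)}=\frac{n^{\frac{\kappa}{2}-1+\varepsilon}\kappa^\varepsilon}{|C_\kappa|}B\Bigl(\frac{\kappa}{2},\frac{\kappa}{2}\Bigr)\le\frac{n^{\frac{\kappa}{2}-\frac34+\varepsilon}\kappa^\varepsilon}{|C_\kappa|}B\Bigl(\frac{\kappa}{2},\frac{\kappa}{2}\Bigr),
\]
and adding the two easy terms from the previous corollaries completes the proof.

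The one genuine difficulty is obtaining $\kappa^\varepsilon$ rather than a fixed positive power of $\kappa$: bounding $\mathcal{I}$ by Cauchy's inequality on circles of \emph{fixed} radius loses a factor $\kappa^{\Theta(1)}$ coming from $(\kappa/2)^{z_1+z_2}$, and it is exactly the pole cancellation — which keeps the integrand holomorphic all the way down to (but not at) the origin — that permits shrinking the radius to $\asymp1/\log(\kappa n)$, trading that power for the harmless $\rho^{-1}=O(\log(\kappa n))$. (Equivalently, after pulling out $\Gamma(\kappa/2)^2$ as above, $\mathcal{I}/\Gamma(\kappa/2)^2$ is the coefficient of $z_1^2z_2^2$ in an explicit holomorphic function, which is a polynomial of bounded degree in $\log(\kappa/2\pi)$ and in the logarithms $\log a,\log d,\log g,\log p$ appearing in $S$, hence $\ll_\varepsilon\kappa^\varepsilon n^\varepsilon$.)
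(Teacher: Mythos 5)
Your proof is correct and follows essentially the same route as the paper: both feed Proposition~\ref{prop. main term for the general case} at $\mathbf{s}=\mathbf{0}$, reuse Corollaries~\ref{cor. small orbital integral estimation} and~\ref{cor. dual orbital integral estimation} for $J_{\sm}^{2}$ and $J_{\du}^{2}$, and then bound the double contour integral by $\Gamma(\kappa/2)^{2}(\kappa n)^{\varepsilon}$. The paper does that last step by directly applying the residue theorem and using $\Gamma'(\kappa/2)/\Gamma(\kappa/2)\ll\log\kappa$, whereas you shrink both contours to radius $\asymp 1/\log(\kappa n)$ and estimate the integrand uniformly there, which --- as your closing parenthetical already observes --- is just an equivalent way of bounding the same residue coefficient.
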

\begin{proof}
Apply Proposition \ref{prop. main term for the general case}, with $\mathbf{s}=\mathbf{0}$ to see that 
\[J_{\si}(\textbf{0},n)=J_{\sm}^2(\textbf{0},n)+J_{\du}^2(\textbf{0},n)-\frac{2n^{\frac{\kappa}{2}-1}}{i^{\kappa}C_{\kappa}\Gamma(\kappa)}\frac{1}{(2\pi i)^2}\oint\displaylimits_{|z_1|=1}\oint\displaylimits_{|z_2|=1}\frac{(z_1^2-z_2^2)^2}{z_1^3z_2^3}K(z_1,z_2;n)\,dz_1\,dz_2.	\]

	By Corollary \ref{cor. small orbital integral estimation} and Corollary \ref{cor. dual orbital integral estimation}, we obtain
	\[J_{\sm}^2(\textbf{0},n)+J_{\du}^2(\textbf{0},n)\ll_{\varepsilon}  n^{\frac{\kappa}{2}-\frac{3}{4}+\varepsilon}\frac{B(\kappa/2,\kappa/2)}{|C_{\kappa}|}.\]
	 For the last term, apply the residue theorem and the estimate
	\[\frac{\Gamma'\left(\frac{\kappa}{2}\right)}{\Gamma\left(\frac{\kappa}{2}\right)}\ll \log \kappa\ll_{\varepsilon} \kappa^{\varepsilon}.\]
	This implies that the last term is $O_{\varepsilon}\left(n^{\frac{\kappa}{2}-1+\varepsilon}\frac{\kappa^{\varepsilon}B(\kappa/2,\kappa/2)}{|C_{\kappa}|}\right)$ which is an admissible error in comparison to the bound obtained for the first two terms.
\end{proof}

\begin{remark}\label{rem. singular orbital integral when n=1}
	When $n=1,$ we have that $J_{\sm}^2(\textbf{0},1)$ and $J_{\du}^2(\textbf{0},1)$ vanish since in these terms we must have that $n=a=d=g=1$, and the terms defining these sums are over $g$ not a square. Now a similar argument to Remark \ref{rem, small orbit n=1} and Remark \ref{rem, dual orbit n=1} will show that 
	\begin{equation}\label{eq, closed formula for singular orbital integral}
J_{\si}(\textbf{0},1)=-\frac{2}{i^{\kappa}C_{\kappa}\Gamma(\kappa)}\frac{1}{(2\pi i)^2}\oint\displaylimits_{|z_1|=1}\oint\displaylimits_{|z_2|=1}\frac{(z_1^2-z_2^2)^2}{(z_1z_2)^3}\frac{\Gamma(z_1+\kappa/2)\Gamma(z_2+\kappa/2)}{\pi^{z_1+z_2}}\zeta(1+z_1+z_2)\,dz_1\,dz_2.
\end{equation}
Applying the Laurent expansion for $\zeta(s)$ (at $s=1$) and the residue theorem, we can show that
\[J_{\si}(\textbf{0},1)=\frac{4\Gamma\left(\frac{\kappa}{2}\right)^2}{i^{\kappa}C_{\kappa}\Gamma(\kappa)}\left(\frac{\Gamma'\left(\frac{\kappa}{2}\right)}{\Gamma\left(\frac{\kappa}{2}\right)}+O(1)\right)=\frac{4\Gamma\left(\frac{\kappa}{2}\right)^2}{i^{\kappa}C_{\kappa}\Gamma(\kappa)}\left(\log\kappa+O\left(1+\frac{1}{\kappa}\right)\right).\]
\end{remark}

\section{The Regular Orbital Integrals $J_{\Reg}(\textbf{s},n)$}\label{sec. regular}
In this section, let $n$ be an odd square integer. We study $J_{\mathrm{Reg}}(\mathbf{s},n)$. For any integer $m\neq 0,-n$, let $\mathcal{M}_m$ denote the set of matrices
\[
    \mathcal{M}_m = \left\{ \begin{pmatrix} a&b\\c&d \end{pmatrix} \in M_2(\mathbb{Z}) \;:\; ad=n+m, \, bc=m, 4\mid c, \, \gcd(a,b,c,d)=1 \right\}.
\]
We define $\Omega = \bigsqcup_{m \neq 0, -n} \mathcal{M}_m$; this is precisely the last union of matrices in Lemma \ref{lem. bruhat decom}. We further decompose $\Omega$ into the disjoint union $\Omega_1 \sqcup \Omega_2 \sqcup \Omega_3$, where
\begin{align*}
    \Omega_1 &= \bigsqcup_{m\geq 1} \mathcal{M}_m, \\
    \Omega_2 &= \bigsqcup_{m\leq -(n+1)} \mathcal{M}_m, \\
    \Omega_3 &= \bigsqcup_{-n+1 \leq m \leq -1} \mathcal{M}_m.
\end{align*}
Then set $\overline{\Omega}_j=\Omega_j/\{\pm I\}$. We define
\[J_{\Reg}^j(\textbf{s},n)=\frac{n^{\kappa-1}}{C_{\kappa}}\int_0^{\infty}\int_0^{\infty}\sum_{\gamma\in\Omega_j}\frac{y_1^{s_1+\kappa/2}y_2^{s_2+\kappa/2}}{R_{\gamma}(iy_1,iy_2)}\frac{\,dy_1}{y_1}\frac{\,dy_2}{y_2}=\frac{2n^{\kappa-1}}{C_{\kappa}}\int_0^{\infty}\int_0^{\infty}\sum_{\gamma\in\overline{\Omega}_j}\frac{y_1^{s_1+\kappa/2}y_2^{s_2+\kappa/2}}{R_{\gamma}(iy_1,iy_2)}\frac{\,dy_1}{y_1}\frac{\,dy_2}{y_2}.\]
The disjoint union of matrices implies
\begin{equation}\label{eq. decom of regular orbital integral}
	J_{\Reg}(\textbf{s},n)=J_{\Reg}^1(\textbf{s},n)+J_{\Reg}^2(\textbf{s},n)+J_{\Reg}^3(\textbf{s},n).
\end{equation}
The main result of this section is the following:
\begin{prop}\label{prop, the error term in RTF}
	We adopt the notation established above. The regular orbital integral $J_{\Reg}(\textbf{s},n)$ converges absolutely in the region
    \[
    \left\{\mathbf{s} = (s_{1},s_{2}) \in \mathbb{C}^{2}:|\Re(s_1)|<\frac{\kappa}{2}-1,
|\Re(s_2)|<\frac{\kappa}{2}-1. \right\}.
    \]
In particular, if we set $\mathbf{s}=\mathbf{0}$ then
    $$J_{\Reg}(\textbf{0}, n)=J_{\Reg}^1(\textbf{0},n)+J_{\Reg}^2(\textbf{0},n)+J_{\Reg}^3(\textbf{0},n),$$ with
\begin{align*}
J_{\Reg}^1(\textbf{0},n)&=\frac{2n^{\kappa-1}B(\kappa/2,\kappa/2)^2}{i^{2\kappa}C_{\kappa}}\sum_{m\geq1}\frac{F(\kappa/2,\kappa/2;\kappa;n/(m+n))}{(m+n)^{\kappa/2}}\sum_{\substack{ad=n+m\\ a,d\geq1}}\sum_{\substack{bc=m\\b,c\geq1,4\mid c\\\gcd(a,b,c,d)=1}}(t_{\gamma}^{-2\kappa}i^{\kappa}+t_{\widehat{\gamma}}^{-2\kappa}i^{-\kappa}),\\
J_{\Reg}^2(\textbf{0},n)&=\frac{2n^{\kappa-1}B(\kappa/2,\kappa/2)^2}{i^{2\kappa}C_{\kappa}}\sum_{m\geq1}\frac{F(\kappa/2,\kappa/2;\kappa;n/(m+n))}{(m+n)^{\kappa/2}}\sum_{\substack{ad=m\\ a,d\geq1}}\sum_{\substack{bc=m+n\\b,c\geq1,4|c\\\gcd(a,b,c,d)=1}}(t_{\overline{\gamma}}^{-2\kappa}+t_{\underline{\gamma}}^{-2\kappa}),\\
J_{\Reg}^3(\textbf{0},n)&=\frac{2n^{\kappa-1}B(\kappa/2,\kappa/2)^2}{i^{2\kappa}C_{\kappa}}\sum_{1\leq m\leq n-1}\frac{F(\kappa/2,\kappa/2;\kappa;n/m)}{m^{\kappa/2}}\sum_{\substack{ad=n-m\\a,d\geq1}}\sum_{\substack{bc=m\\b,c\geq1,4|c\\\gcd(a,b,c,d)=1}}(t_{{}^{\iota}{\gamma}}^{-2\kappa}+t_{{}{\gamma}^{\iota}}^{-2\kappa}).
\end{align*}
\end{prop}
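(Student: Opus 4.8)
The plan is to reduce every regular orbital integral to one master integral and then bound the resulting series; $J_{\Reg}^1,J_{\Reg}^2,J_{\Reg}^3$ will differ only in the sign patterns fed into that master integral and in the branch bookkeeping. By \eqref{eq. decom of regular orbital integral} and the definition before it, $J_{\Reg}^j(\mathbf s,n)=\tfrac{2n^{\kappa-1}}{C_\kappa}\sum_{\gamma\in\overline\Omega_j}\int_0^\infty\int_0^\infty y_1^{s_1+\kappa/2}y_2^{s_2+\kappa/2}R_\gamma(iy_1,iy_2)^{-1}\tfrac{dy_1}{y_1}\tfrac{dy_2}{y_2}$. A matrix $\gamma=\begin{psmallmatrix}a&b\\c&d\end{psmallmatrix}\in\overline\Omega_j$ has $4\mid c$, $c\neq0$, $\gcd(a,b,c,d)=1$, $\det\gamma=n>0$; modulo $\pm I$ I normalize $c>0$, and the constraints defining $\mathcal{M}_m$ then force the signs of $b$ and of the pair $(a,d)$. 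One gets, on $\Omega_1$, two families $\gamma,\widehat\gamma$ built from a positive matrix with $ad=n+m$, $bc=m$ ($m\geq1$); on $\Omega_2$, two families $\overline\gamma,\underline\gamma$ built from a positive matrix with $ad=m$, $bc=m+n$ ($m\geq1$); on $\Omega_3$, two families ${}^\iota\gamma,\gamma^\iota$ built from a positive matrix with $ad=n-m$, $bc=m$ ($1\leq m\leq n-1$) — the three outer sums of the statement. Since $\det\gamma=n>0$ for every matrix in sight, $\gamma(iy_1)+iy_2\in\bH$ throughout.

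For the master integral: for any $\gamma=\begin{psmallmatrix}a&b\\c&d\end{psmallmatrix}$ with $c>0$ and $ad-bc=n>0$, write $j_\gamma(iy_1)=t_\gamma\sqrt{ciy_1+d}$ (from \eqref{eq. modulus condition}) and $\gamma(iy_1)+iy_2=\bigl(b-cy_1y_2+i(ay_1+dy_2)\bigr)/(ciy_1+d)$; recombining the two principal $\kappa$-th powers via Lemma \ref{lemma. complex power st} gives $R_\gamma(iy_1,iy_2)^{-1}=\zeta_\gamma\,t_\gamma^{-2\kappa}N_\gamma^{-\kappa}$ with $N_\gamma=b-cy_1y_2+i(ay_1+dy_2)$ and an explicit unit $\zeta_\gamma\in\{1,-1\}$, equal to $1$ when no branch cut is crossed in the recombination and otherwise tracking those crossings. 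Then I would do the $y_2$-integral by the Beta integral \eqref{B function} and rotate the surviving $y_1$-contour by $\pm\pi/2$, exactly as in the proof of Lemma \ref{lem. special integrals in the dual orbital integral} (the integrand being holomorphic on the appropriate quadrant and decaying like $|y_1|^{-\kappa/2-1}$), arriving at a multiple of $\int_0^\infty t^{\kappa/2-1}(1+t)^{-\kappa/2}(1+\tfrac{bc}{ad}t)^{-\kappa/2}\,dt=B(\kappa/2,\kappa/2)F(\tfrac\kappa2,\tfrac\kappa2;\kappa;1-\tfrac{bc}{ad})$ at $\mathbf s=\mathbf{0}$ (a hypergeometric function with $\mathbf s$-shifted parameters in general). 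Bookkeeping the powers of $i$ created by the branch splittings, one finds $\int_0^\infty\!\int_0^\infty y_1^{\kappa/2}y_2^{\kappa/2}R_\gamma(iy_1,iy_2)^{-1}\tfrac{dy_1}{y_1}\tfrac{dy_2}{y_2}=\zeta_\gamma\,t_\gamma^{-2\kappa}i^{-\kappa}B(\kappa/2,\kappa/2)^2(ad)^{-\kappa/2}F(\tfrac\kappa2,\tfrac\kappa2;\kappa;1-\tfrac{bc}{ad})$.

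Now I would specialize to the six matrix types and reassemble. Feeding in the signed entries and applying where needed the Pfaff transformation $F(\tfrac\kappa2,\tfrac\kappa2;\kappa;z)=(1-z)^{-\kappa/2}F(\tfrac\kappa2,\tfrac\kappa2;\kappa;\tfrac{z}{z-1})$, the argument $1-bc/(ad)$ and the denominator $(ad)^{\kappa/2}$ become $n/(n+m)$ and $(m+n)^{\kappa/2}$ on $\Omega_1$, $n/(m+n)$ and $(m+n)^{\kappa/2}$ on $\Omega_2$, and $n/m$ and $m^{\kappa/2}$ on $\Omega_3$; the leftover powers of $-1$ and $i$ (from $\zeta_\gamma$ and from negative entries) combine with $i^{-\kappa}$ into the prefactor $i^{-2\kappa}$ and the pairings $t_\gamma^{-2\kappa}i^{\kappa}+t_{\widehat\gamma}^{-2\kappa}i^{-\kappa}$, $t_{\overline\gamma}^{-2\kappa}+t_{\underline\gamma}^{-2\kappa}$, $t_{{}^\iota\gamma}^{-2\kappa}+t_{\gamma^\iota}^{-2\kappa}$ of the statement — for which I would use the explicit multipliers of Lemma \ref{lem. t value in the small orbital integral} and Lemma \ref{lem. t value in the dual orbital integral} (whose proofs adapt to these matrices) together with the cocycle relation \eqref{eq. cocyle condition}. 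Reinstating $\tfrac{2n^{\kappa-1}}{C_\kappa}$ yields the three displayed formulas. Absolute convergence in $|\Re(s_1)|,|\Re(s_2)|<\kappa/2-1$ I would get crudely: after the $y_2$-Beta integral the $y_1$-integrand is $\ll y_1^{\Re s_1+\kappa/2-1}|b+iay_1|^{-\Re s_2-\kappa/2}|id-cy_1|^{-\Re s_2-\kappa/2}$ (and symmetrically), integrable for $|\Re s_i|<\kappa/2$ with bound $\ll(n+m)^{-\kappa/2+|\Re s_1|+|\Re s_2|}$ up to bounded factors, and then $\#\{(a,b,c,d):ad=n+m,bc=m\}\ll_\varepsilon(n+m)^\varepsilon$ makes the $m$-series converge.

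The hard part will be the branch bookkeeping, and specifically the $\Omega_3$ case. On $\Omega_1$ the quantities raised to $-\kappa$ never leave a half-plane, so $\zeta_\gamma\equiv1$ and the rotation is unobstructed; on $\Omega_2$ the numerator $N_{\underline\gamma}$ crosses $(-\infty,0]$, so $\zeta_{\underline\gamma}$ is region-dependent, but the resulting Euler integral is still a genuine convergent integral; on $\Omega_3$, however, the positive-real-axis zero of $1-\tfrac{m}{n-m}t$ — equivalently, the hypergeometric argument $n/m$ lying on the cut $[1,\infty)$ — means the $y_1$-contour cannot be rotated onto the real axis without hitting the singularity. I would handle this by running the reduction first for $\mathbf s$ in a subregion where the rotated contour can be pushed just past the pole, so the identity holds there with $F$ read as the boundary value of the hypergeometric function from a fixed side of the cut, and then continuing to $\mathbf s=\mathbf{0}$; the symbol $F(\kappa/2,\kappa/2;\kappa;n/m)$ in the statement is to be understood in this sense, and the $i^{\pm\kappa}$'s and $t$-multipliers are exactly what makes the assembled expression consistent with $J_{\Reg}$ being the geometric side of the relative trace formula.
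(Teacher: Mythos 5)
Your plan diverges from the paper's proof at the key technical step. After the $y_2$-Beta integral, the paper does \emph{not} rotate the $y_1$-contour onto the real axis; instead it keeps the shift parameters $\beta,\gamma$ purely imaginary (e.g.\ $\beta=ib/a$, $\gamma=d/(ic)$) and invokes Gradshteyn--Ryzhik 3.197(1), which is valid for any $|\arg\beta|,|\arg\gamma|<\pi$. That formula produces the ${}_2F_1$ and its argument $1-\gamma/\beta$ directly, for all three $\Omega_j$, with no contour deformation and no case distinction about whether the argument lands on the cut $[1,\infty)$. Your route --- rotate to a real ray, recognize the Euler integral $\int_0^\infty t^{\kappa/2-1}(1+t)^{-\kappa/2}(1+\tfrac{bc}{ad}t)^{-\kappa/2}\,dt$, then Pfaff-transform --- does reproduce the paper's answer for $\Omega_1$ and $\Omega_2$ (with careful tracking of the signed entries and the $(1-z)^{-\kappa/2}$ prefactor), so the comparison is genuinely a different road to the same destination on those two pieces.

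The gap is in the $\Omega_3$ case, and it is a real one. There the rotated integrand acquires a branch point at $t=(n-m)/m>0$ on the proposed contour; you notice this, but the remedy you offer --- ``run the reduction for $\mathbf{s}$ in a subregion where the rotated contour can be pushed just past the pole, then continue to $\mathbf{s}=\mathbf{0}$'' --- does not actually address it. The location of the branch point $t=(n-m)/m$ is independent of $\mathbf{s}$, so no choice of $\mathbf{s}$ unblocks the rotation; pushing the contour past the singularity changes the integral by a residue/cut contribution that your sketch does not account for; and the statement that $F(\kappa/2,\kappa/2;\kappa;n/m)$ is ``to be understood as a boundary value from a fixed side of the cut'' is not a determination, it is a promissory note. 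In the paper this ambiguity never arises because the $y_1$-integral is evaluated with imaginary shifts, where 3.197(1) applies cleanly and the side of the cut is fixed automatically by holomorphy of the left-hand integral in $\beta,\gamma$. To repair your argument you would either have to carry out the contour deformation explicitly (splitting the rotated contour around the branch point and showing the extra piece vanishes or is absorbed), or else abandon the rotation and, as the paper does, appeal to 3.197(1) for complex $\beta,\gamma$ directly. A secondary, smaller discrepancy: your convergence estimate yields a region of the form $|\Re s_1|+|\Re s_2|<\kappa/2-1$ rather than the stated $|\Re s_1|,|\Re s_2|<\kappa/2-1$; and the phase bookkeeping producing the displayed $i^{\pm\kappa}$ and $t$-multiplier combinations, which is where most of the labor in the paper's Lemmas 4.3--4.5 lives, is left entirely implicit in your sketch.
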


\begin{proof}
	The region of absolute convergence follows from \cite[Theorem 2.1]{RR05}. The second part of the proposition is a direct corollary of Lemma \ref{lem, 1st error}, Lemma \ref{lem, 2nd error} and Lemma \ref{lem, 3rd error}.
\end{proof}

We analyze $J_{\Reg}^j(\textbf{s},n)$ separately in the subsequent sections. As a preliminary step, we make a reduction using the following lemma:
\begin{lemma}\label{lemma, prep for roi}
	Let $\gamma\in G_4(n).$ Then for $y_1,y_2>0$ and $\kappa\in\bR,$
	\[\gamma(iy_1)^{\kappa}\left(\frac{iy_2}{\gamma(iy_1)}+1\right)^{\kappa}=(iy_2+\gamma(iy_1))^{\kappa}.\]
\end{lemma}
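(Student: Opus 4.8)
The plan is to reduce the identity to the multiplicativity rule for complex powers, Lemma~\ref{lemma. complex power st}, after checking that the arguments of the two factors cooperate. Set $z_1=\gamma(iy_1)$ and $z_2=\frac{iy_2}{\gamma(iy_1)}+1$, so that $z_1z_2=iy_2+\gamma(iy_1)$ and the claimed identity becomes precisely $z_1^{\kappa}z_2^{\kappa}=(z_1z_2)^{\kappa}$.

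First I would verify that the three complex powers are defined, i.e.\ that none of $z_1$, $z_2$, $z_1z_2$ lies on $(-\infty,0]$. Since $\gamma\in G_4(n)$ has $\det\gamma=n>0$, we have $\Im z_1=\frac{ny_1}{|c(iy_1)+d|^2}>0$, so $z_1\in\bH$ and likewise $z_1z_2=iy_2+z_1\in\bH$; in particular $\arg z_1,\arg(z_1z_2)\in(0,\pi)$. For $z_2$, a one-line computation gives $\Re\bigl(\frac{iy_2}{z_1}\bigr)=\frac{y_2\Im z_1}{|z_1|^2}>0$, hence $\Re z_2>1$ and so $\arg z_2\in(-\tfrac{\pi}{2},\tfrac{\pi}{2})$.

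By Lemma~\ref{lemma. complex power st} (applied with $\alpha=\kappa$), the identity $z_1^{\kappa}z_2^{\kappa}=(z_1z_2)^{\kappa}$ holds as soon as $\arg z_1+\arg z_2\in(-\pi,\pi)$; the crux of the proof is to establish this, since otherwise a spurious factor $e^{\pm 2\kappa\pi i}$ would appear. Writing $z_1=re^{i\theta_1}$ with $\theta_1=\arg z_1\in(0,\pi)$ and $r>0$, we get $\frac{iy_2}{z_1}=\frac{y_2}{r}e^{i(\pi/2-\theta_1)}$, so $\arg\bigl(\frac{iy_2}{z_1}\bigr)=\tfrac{\pi}{2}-\theta_1$. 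Adding $1$ to a complex number of positive real part preserves the sign of the imaginary part and can only decrease the absolute value of the argument, so $\arg z_2$ lies between $0$ and $\tfrac{\pi}{2}-\theta_1$. Hence $\arg z_1+\arg z_2$ lies between $\theta_1$ and $\tfrac{\pi}{2}$, whichever side of $\tfrac{\pi}{2}$ the angle $\theta_1$ is on, and in every case $\arg z_1+\arg z_2\in(0,\pi)\subset(-\pi,\pi)$.

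Applying the first case of Lemma~\ref{lemma. complex power st} now yields $z_1^{\kappa}z_2^{\kappa}=(z_1z_2)^{\kappa}$, which is the asserted identity. The only genuine obstacle is the argument bookkeeping of the previous paragraph: one must rule out $\arg z_1+\arg z_2>\pi$, and this is exactly where the positivity of $\Re(iy_2/z_1)$ and the argument-shrinking effect of adding $1$ enter; the rest is an unwinding of definitions.
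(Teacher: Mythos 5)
Your proof is correct and follows the same route as the paper's: verify the arguments of the relevant complex numbers and then apply Lemma~\ref{lemma. complex power st}. The paper's version is more terse — it records $\arg(\gamma(iy_1))\in(0,\pi)$ and $\arg(iy_2+\gamma(iy_1))\in(0,\pi)$ and observes that this forces the sum of arguments into $(-\pi,\pi)$ — whereas you compute $\arg z_2$ directly via the observation that adding $1$ to $iy_2/z_1$ (whose real part is positive) shrinks the argument; both fill in the same bookkeeping.
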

\begin{proof}
	We first check the arguments of each complex number. We can show that $\arg(\gamma(iy_1))\in(0,\pi)$ and $\arg(iy_2+\gamma(iy_1))\in(0,\pi).$ This will force 
	\[\arg(\gamma(iy_1))+\arg\left(\frac{iy_2}{\gamma(iy_1)}+1\right)\in(-\pi,\pi).\]
	Applying Lemma \ref{lemma. complex power st} completes the proof.
\end{proof}
To simplify notation in the subsequent sections, we introduce some notation. For $\mathbf{s}=(s_1,s_2)\in\bC^2,$  we will set
$$\mathcal{B}(s_2):=\mathcal{B}(s_2;n,\kappa)=\frac{2n^{\kappa-1}B(s_2+\kappa/2,-s_2+\kappa/2)}{i^{s_2+\kappa/2}C_{\kappa}}.$$
By Lemma \ref{lemma, prep for roi} and the definition of $R_\gamma$ in \eqref{eq. R function}, we obtain
\[J_{\Reg}^j(\textbf{s},n)=\frac{2n^{\kappa-1}}{C_{\kappa}}\sum_{\gamma\in\overline{\Omega}_j}t_{\gamma}^{-2\kappa}\int_0^{\infty}\frac{y_1^{s_1+\kappa/2}}{j(\gamma,iy_1)^{\kappa}\gamma(iy_1)^{\kappa}}\int_0^{\infty}\frac{y_2^{s_2+\kappa/2}}{\left(\frac{i}{\gamma(iy_1)}y_2+1\right)^{\kappa}}\frac{\,dy_2}{y_2}\frac{\,dy_1}{y_1}.\]
Here $\gamma=\begin{psmallmatrix}
	a&b\\c&d
\end{psmallmatrix}\in G_4(n)$ and $t_{\gamma}=\frac{j_{\gamma}(z)}{(cz+d)^{\frac{1}{2}}}.$ We apply \cite[Equation 3.194(3)]{GradshteynRyzhik2007} with $\mu=s_2+\kappa/2$ to rewrite the integral in the form
\begin{equation}\label{eq. first reduction for roi}
J_{\Reg}^j(\textbf{s},n)=\mathcal{B}(s_2)\sum_{\gamma\in\overline{\Omega}_j}t_{\gamma}^{-2\kappa}\int_0^{\infty}\frac{y_1^{s_1+\kappa/2}}{j(\gamma,iy_1)^{\kappa}\gamma(iy_1)^{-s_2+\kappa/2}}\frac{\,dy_1}{y_1}.
\end{equation}

\begin{remark}\label{rem: matrix-discussion-sec4}We will now prove the decompositions for $J_{\Reg}^j(\textbf{s},n)$. While the details are different in each case, the strategy remains the same. We consider the sum over the matrices in $\Omega_j$; these are matrices of the form $\begin{psmallmatrix}
    a & b\\ c & d
\end{psmallmatrix}\in M_2(\bZ)$. Our goal is to rewrite the corresponding sums for $a,b,c,d>0$. Furthermore, for each $\Omega_j$, we determine the signs of the matrix entries. This is the motivation of the notation in \S \ref{subsubsec: transformation-matrices} as these will be the corresponding locations of the negative signs in the matrices when we reindex our sums to be over positive integers. 
\end{remark}

\subsection{The Calculation of $J_{\Reg}^1(\textbf{s},n)$}\label{subsec. reg 1} 
First, we consider the set $\Omega_1$. For positive integers $a,b,c,d$ we define the matrices 
$$\gamma=\begin{pmatrix}
	a&b\\c&d
\end{pmatrix} \quad and \quad \widehat{\gamma}=\begin{pmatrix}
	-a&b\\c&-d
\end{pmatrix}.$$ These matrices belong to $G_4(n)$ and the multipliers $t_\gamma=\frac{j_\gamma(z)}{(cz+d)^{1/2}}$ and $t_{\widehat{\gamma}}$ are well-defined. In this section, we prove the following lemma:
\begin{lemma}\label{lem, 1st error}The orbital integral 
	$J_{\Reg}^1(\textbf{s},n)$ converges absolutely in the region
    \[
    \left\{\mathbf{s} = (s_{1},s_{2}) \in \mathbb{C}^{2}:|\Re(s_1)|<\frac{\kappa}{2}-1, \quad
|\Re(s_2)|<\frac{\kappa}{2}-1 \right\}.
    \]
In particular, if we set $\mathbf{s}=\mathbf{0}$ then
$$
J_{\Reg}^1(\textbf{0},n)=\frac{2n^{\kappa-1}B(\kappa/2,\kappa/2)^2}{i^{2\kappa}C_{\kappa}}\sum_{m\geq1}\frac{F(\kappa/2,\kappa/2;\kappa;n/(m+n))}{(m+n)^{\kappa/2}}\sum_{\substack{ad=n+m\\ a,d\geq1}}\sum_{\substack{bc=m\\b,c\geq1,4\mid c\\\gcd(a,b,c,d)=1}}(t_{\gamma}^{-2\kappa}i^{\kappa}+t_{\widehat{\gamma}}^{-2\kappa}i^{-\kappa}).
$$
\end{lemma}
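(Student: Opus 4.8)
The starting point is the reduction \eqref{eq. first reduction for roi}, which already expresses $J_{\Reg}^{1}(\mathbf{s},n)$ as $\mathcal{B}(s_2)$ times the sum over $\gamma\in\overline{\Omega}_1$ of $t_{\gamma}^{-2\kappa}$ times a single $y_1$-integral, so two things remain: to make the index set $\overline{\Omega}_1$ explicit, and to evaluate that $y_1$-integral for each matrix. For the first, every $\begin{psmallmatrix}a&b\\c&d\end{psmallmatrix}\in\mathcal{M}_m$ with $m\ge 1$ has $ad=n+m>0$, $bc=m>0$ and all four entries nonzero, so $\sgn(a)=\sgn(d)$ and $\sgn(b)=\sgn(c)$; modulo $\pm I$ this leaves exactly the two sign patterns represented by $\gamma=\begin{psmallmatrix}a&b\\c&d\end{psmallmatrix}$ and $\widehat{\gamma}=\begin{psmallmatrix}-a&b\\c&-d\end{psmallmatrix}$ with $a,b,c,d\ge 1$, and both lie in $M_4^+\cap\{\gcd=1\}=G_4(n)$ (their determinant is $ad-bc=n$ and $4\mid c$), so $j_{\widehat{\gamma}}$ and $t_{\widehat{\gamma}}$ are defined. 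Thus $\overline{\Omega}_1$ is parametrized by positive quadruples $(a,d,b,c)$ with $ad=n+m$, $bc=m$, $4\mid c$, $\gcd(a,b,c,d)=1$, $m\ge 1$, together with the choice of $\gamma$ or $\widehat{\gamma}$, which is the index set in the statement. Absolute convergence of the double integral defining $J^{1}_{\Reg}(\mathbf{s},n)$ on the stated strip, and the legitimacy of interchanging it with the sum, follow from \cite[Theorem 2.1]{RR05}.

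For the $\gamma$-term ($a,b,c,d>0$): $ciy_1+d$ is in the first quadrant and $\gamma(iy_1)\in\bH$, and Lemma \ref{lemma. complex power st} applied to $\gamma(iy_1)=(aiy_1+b)/(ciy_1+d)$ (whose two factor-arguments sum into $(-\pi,\pi)$) gives $j(\gamma,iy_1)^{\kappa}\gamma(iy_1)^{-s_2+\kappa/2}=(ciy_1+d)^{s_2+\kappa/2}(aiy_1+b)^{-s_2+\kappa/2}$, so the inner integral equals $\int_0^{\infty}y_1^{s_1+\kappa/2-1}(ciy_1+d)^{-s_2-\kappa/2}(aiy_1+b)^{s_2-\kappa/2}\,dy_1$. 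The substitution $y_1\mapsto(b/a)y_1$ turns this into $(b/a)^{s_1+\kappa/2}b^{s_2-\kappa/2}d^{-s_2-\kappa/2}$ times $\int_0^{\infty}y_1^{s_1+\kappa/2-1}(1+iy_1)^{s_2-\kappa/2}\bigl(1+\tfrac{m}{n+m}iy_1\bigr)^{-s_2-\kappa/2}\,dy_1$ (using $bc=m$, $ad=n+m$); rotating the contour to the ray $y_1=-iu$, $u>0$ — legitimate by Cauchy's theorem, since the integrand's singularities all lie on the positive imaginary axis and it decays like $|y_1|^{\Re(s_1)-\kappa/2-1}$ — produces a factor $(-i)^{s_1+\kappa/2}$ and the real Euler integral $\int_0^{\infty}u^{s_1+\kappa/2-1}(1+u)^{s_2-\kappa/2}\bigl(1+\tfrac{m}{n+m}u\bigr)^{-s_2-\kappa/2}\,du=B(s_1+\kappa/2,\kappa/2-s_1)\,{}_2F_1\bigl(\kappa/2+s_2,\kappa/2+s_1;\kappa;\tfrac{n}{n+m}\bigr)$ (see, e.g., \cite[Equation 3.197(3)]{GradshteynRyzhik2007}). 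Collecting the powers of $a,b,d$ yields $(ad)^{-\kappa/2}=(n+m)^{-\kappa/2}$; at $\mathbf{s}=\mathbf{0}$ the prefactor $(-i)^{\kappa/2}/i^{\kappa/2}=i^{-\kappa}$ coming from $\mathcal{B}(0)$ matches the $t_{\gamma}^{-2\kappa}i^{\kappa}$ term, since $i^{\kappa}/i^{2\kappa}=i^{-\kappa}$.

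The $\widehat{\gamma}$-term runs in parallel, but three separate branch choices each contribute an extra power of $i$. First, $j(\widehat{\gamma},iy_1)=ciy_1-d=-(d-ciy_1)$ forces $(ciy_1-d)^{\kappa}=e^{i\pi\kappa}(d-ciy_1)^{\kappa}$; meanwhile $\widehat{\gamma}(iy_1)=(aiy_1-b)/(d-ciy_1)$ still lies in $\bH$, and although the naive sum of its factor-arguments lies in $(\pi/2,3\pi/2)$, the constraint $\widehat{\gamma}(iy_1)\in\bH$ forces that sum into $(\pi/2,\pi)\subset(-\pi,\pi)$, so the uncorrected case of Lemma \ref{lemma. complex power st} still applies to $\widehat{\gamma}(iy_1)^{-s_2+\kappa/2}$. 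Carrying out $y_1\mapsto(b/a)y_1$ produces the numerator factor $aiy_1-b=b(iy_1-1)=-b(1-iy_1)$, contributing a further $e^{i\pi(s_2-\kappa/2)}$; and since the integrand's singularities now sit on the negative imaginary axis, one rotates instead to the ray $y_1=+iu$, $u>0$, giving $i^{s_1+\kappa/2}$ in place of $(-i)^{s_1+\kappa/2}$ while leaving the same real Euler integral and $B$-factor. At $\mathbf{s}=\mathbf{0}$ these three factors are $e^{-i\pi\kappa}=i^{-2\kappa}$, $e^{-i\pi\kappa/2}=i^{-\kappa}$, and $i^{\kappa/2}/(-i)^{\kappa/2}=i^{\kappa}$, whose product is $i^{-2\kappa}$; hence the $\widehat{\gamma}$-term acquires an extra $i^{-2\kappa}$ relative to the $\gamma$-term, i.e.\ inside the common prefactor $\tfrac{2n^{\kappa-1}B(\kappa/2,\kappa/2)^2}{i^{2\kappa}C_{\kappa}}$ it appears with weight $t_{\widehat{\gamma}}^{-2\kappa}i^{-\kappa}$ against the $\gamma$-term's $t_{\gamma}^{-2\kappa}i^{\kappa}$, since $i^{-\kappa}/i^{2\kappa}=i^{-3\kappa}=i^{-\kappa}\cdot i^{-2\kappa}$. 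Summing the two contributions over $\overline{\Omega}_1$ and setting $\mathbf{s}=\mathbf{0}$ gives the displayed formula.

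The main obstacle is precisely this branch-cut bookkeeping in the $\widehat{\gamma}$-case: one must choose which factor of $\widehat{\gamma}(iy_1)$ to group, determine which case of Lemma \ref{lemma. complex power st} is in force (using the a priori fact $\widehat{\gamma}(iy_1)\in\bH$), factor $aiy_1-b$ through its true argument rather than a naive logarithm, and verify that the contour may be rotated into the half-plane free of branch points. This is exactly the role of the decorated matrices $\widehat{\gamma},\overline{\gamma},\underline{\gamma},{}^{\iota}\gamma,\gamma^{\iota}$ of \S\ref{subsubsec: transformation-matrices}: they record the sign patterns recurring across $J^{1}_{\Reg}$, $J^{2}_{\Reg}$ and $J^{3}_{\Reg}$, and the same scheme will treat the latter two with the hypergeometric argument replaced by $n/m$ as appropriate.
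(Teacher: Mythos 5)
Your proof is correct, and it takes essentially the same route as the paper — start from \eqref{eq. first reduction for roi}, split $\overline{\Omega}_1$ by sign into the $\gamma$ and $\widehat{\gamma}$ representatives, and evaluate the remaining $y_1$-integral as a ${}_2F_1$ — but the mechanics differ in one respect worth flagging. The paper applies \cite[Equation 3.197(1)]{GradshteynRyzhik2007} directly with the complex parameters $\beta = d/(ic)$, $\gamma = b/(ia)$, letting the formula's hypotheses ($|\arg\beta|,|\arg\gamma|<\pi$) absorb the branch bookkeeping, and in the $\widehat{\gamma}$-case groups the Möbius action as $(-iay_1+b)/(icy_1-d)$, which triggers the third case of Lemma~\ref{lemma. complex power st} and yields a global phase $e^{2\pi i(-s_2+\kappa/2)}$ in the denominator. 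You instead normalize by $y_1\mapsto (b/a)y_1$, rotate the contour to a coordinate axis, and use the real Euler integral; in the $\widehat{\gamma}$-case you group as $(aiy_1-b)/(d-ciy_1)$, which keeps the factorization of $\widehat{\gamma}(iy_1)^{-s_2+\kappa/2}$ in the uncorrected case and instead picks up $e^{i\pi\kappa}$ from $(ciy_1-d)^\kappa$ and $e^{i\pi(s_2-\kappa/2)}$ from $-b(1-iy_1)$, plus opposite rotation directions. I checked that your three phase factors multiply to $i^{-2\kappa}$, matching the paper's net ratio $i^{-5\kappa/2}/i^{-\kappa/2}=i^{-2\kappa}$, and that your $\gamma$-term normalization $(-i)^{\kappa/2}/i^{\kappa/2}=i^{-\kappa}$ matches the coefficient $i^{\kappa}/i^{2\kappa}$; the Euler integral does reduce to $B(s_1+\kappa/2,\kappa/2-s_1)\,{}_2F_1(s_2+\kappa/2,s_1+\kappa/2;\kappa;n/(n+m))$ as you assert. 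Both versions cite the same RR05 result for convergence. Your route makes the branch-cut choices fully explicit at the cost of tracking more small factors; the paper's is terser but buries the branch issues in the hypotheses of the tabulated integral.
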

\begin{proof}
Recall that for matrices in $\Omega_1$ we have $ad > 0$. We decompose the expression $J_{\Reg}^1(\textbf{s},n)$ into parts where $a, d > 0$ (associated with $\gamma$) and into parts with $a, d < 0$ (associated with $\widehat{\gamma}$). Using equation \eqref{eq. first reduction for roi} yields 
\begin{align*}
	J_{\Reg}^1(\textbf{s},n)
	&=\mathcal{B}(s_2)\sum_{m\geq1}\sum_{\substack{ad=n+m\\a,d\geq1}}\sum_{\substack{bc=m\\b,c\geq1,4|c\\\gcd(a,b,c,d)=1}}t_{\gamma}^{-2\kappa}\int_0^{\infty}\frac{y_1^{s_1+\kappa/2}}{(icy_1+d)^{\kappa}\left(\frac{iay_1+b}{icy_1+d}\right)^{-s_2+\kappa/2}}\frac{\,dy_1}{y_1}\\
	&\phantom{=}+\mathcal{B}(s_2)\sum_{m\geq1}\sum_{\substack{ad=n+m\\a,d \geq 1}}\sum_{\substack{bc=m\\b,c\geq1,4|c\\\gcd(a,b,c,d)=1}}t_{\widehat{\gamma}}^{-2\kappa}\int_0^{\infty}\frac{y_1^{s_1+\kappa/2}}{(icy_1-d)^{\kappa}\left(\frac{-iay_1+b}{icy_1-d}\right)^{-s_2+\kappa/2}}\frac{\,dy_1}{y_1}\\
	&=:J_{\Reg}^{1,1}(\textbf{s},n)+J_{\Reg}^{1,2}(\textbf{s},n).
\end{align*}

\noindent\textbf{Evaluation of $J_{\mathrm{Reg}}^{1,1}(\textbf{s},n)$.}
We use Lemma \ref{lemma. complex power st} to handle complex powers and rewrite
$$(icy_1+d)^{s_2+\kappa/2}(iay_1+b)^{-s_2+\kappa/2}=i^\kappa c^{s_2+\kappa/2}a^{-s_2+\kappa/2}\Big(y_1+\frac{d}{ic}\Big)^{s_2+\kappa/2}\Big(y_1+\frac{b}{ia}\Big)^{-s_2+\kappa/2}.$$

Expressing the resulting integral in terms of hypergeometric functions, using \cite[Equation 3.197(1)]{GradshteynRyzhik2007} with $\nu=s_1+\kappa/2,$ $\mu=s_2+\kappa/2$, $\rho=-s_2+\kappa/2$, $\beta=\frac{d}{ic}$, $\gamma=\frac{b}{ia}$ for $-\frac{\kappa}{2}<\Re(s_1)<\frac{\kappa}{2}$, we obtain
\begin{align*}
  J_{\Reg}^{1,1}(\textbf{s},n)=&\frac{\mathcal{B}(s_2)B(s_1+\kappa/2,-s_1+\kappa/2)}{i^{s_1+\kappa/2}}\sum_{m\geq1}\frac{F(s_2+\kappa/2,s_1+\kappa/2;\kappa;n/(n+m))}{(m+n)^{\kappa/2}}\\
  &\times\sum_{\substack{ad=n+m\\a,d\geq1}}\sum_{\substack{bc=m\\b,c\geq1,4|c\\\gcd(a,b,c,d)=1}}t_{\gamma}^{-2\kappa}b^{s_1+s_2}a^{-s_1}d^{-s_2}.
\end{align*}
\noindent\textbf{Evaluating $ J_{\Reg}^{1,2}(\textbf{s},n)$ }
We proceed in a similar way for $J_{\Reg}^{1,2}(\textbf{s},n)$. Notably, we pick up the phase factor $e^{2\pi i (-s_2+\kappa/2)}$ when we apply Lemma \ref{lemma. complex power st} to factor $(\frac{-iay_1+b}{icy_1-d})^{-s_2+\kappa/2}$. Then 
\begin{align*}
J_{\Reg}^{1,2}(\textbf{s},n)=&\frac{\mathcal{B}(s_2)}{e^{2\pi i(-s_2+\kappa/2)}}\sum_{m\geq1}\sum_{\substack{ad=n+m\\a,d \geq1}}\sum_{\substack{bc=m\\b,c\geq1,4\mid c\\\gcd(a,b,c,d)=1}}t_{\widehat{\gamma}}^{-2\kappa}\int_0^{\infty}\frac{y_1^{s_1+\kappa/2}}{(icy_1-d)^{s_2+\kappa/2}(-iay_1+b)^{-s_2+\kappa/2}}\frac{\,dy_1}{y_1}\\
=&\frac{\mathcal{B}(s_2)}{i^{\kappa}e^{\pi i(-s_2+\kappa)}}\sum_{m\geq1}\sum_{\substack{ad=n+m\\a,d\geq 1}}\sum_{\substack{bc=m\\b,c\geq1,4|c\\\gcd(a,b,c,d)=1}}\frac{t_{\widehat{\gamma}}^{-2\kappa}}{c^{s_2+\kappa/2}a^{-s_2+\kappa/2}}\\
&\times\int_0^{\infty}y_1^{s_1+\kappa/2}\left(y_1+\frac{id}{c}\right)^{-(s_2+\kappa/2)}\left(y_1+\frac{ib}{a}\right)^{-(-s_2+\kappa/2)}\frac{\,dy_1}{y_1}.
\end{align*}
Invoking the integral relation \cite[Equation 3.197(1)]{GradshteynRyzhik2007} with $\nu=s_1+\kappa/2,$ $\mu=s_2+\kappa/2$, $\rho=-s_2+\kappa/2$, $\beta=\frac{id}{c}$ and $\gamma=\frac{ib}{a}$ for $-\frac{\kappa}{2}<\Re(s_1)<\frac{\kappa}{2}$, gives
\begin{align*}
  J_{\Reg}^{1,2}(\textbf{s},n)=&\frac{\mathcal{B}(s_2)B(s_1+\kappa/2,-s_1+\kappa/2)}{i^{-s_1-2s_2+5\kappa/2}}\sum_{m\geq1}\frac{F(s_2+\kappa/2,s_1+\kappa/2;\kappa;n/(m+n))}{(m+n)^{\kappa/2}}\\
  &\times\sum_{\substack{ad=n+m\\a,d\geq 1}}\sum_{\substack{bc=m\\b,c\geq1,4|c\\\gcd(a,b,c,d)=1}}t_{\widehat{\gamma}}^{-2\kappa}a^{-s_1}d^{-s_2}b^{s_1+s_2}.
\end{align*}
Finally, we substitute the results for $ J_{\Reg}^{1,1}(\textbf{s},n)$ and $ J_{\Reg}^{1,2}(\textbf{s},n)$ into $ J_{\Reg}^{1}(\textbf{s},n)$, and we set $\textbf{s}=\textbf{0}$ to complete the proof.
\end{proof}

\subsection{The Calculation of $J_{\Reg}^2(\textbf{s},n)$}\label{subsec. reg 2}
Next, we consider the set $\Omega_2.$ To simplify computations, we introduce the following matrices $$\overline{\gamma}=\begin{pmatrix}
	-a&-b\\c&d
\end{pmatrix}, \quad \text{ and } \underline{\gamma}=\begin{pmatrix}
	a&-b\\c&-d
\end{pmatrix}.$$ Since $4\mid c$, $\overline{\gamma}, \underline{\gamma} \in G_4(n)$, which implies that the multipliers $t_{\overline{\gamma}}$ and $t_{\underline{\gamma}}$ are well defined.  

\begin{lemma}\label{lem, 2nd error} The orbital integral 
$J_{\Reg}^2(\textbf{s},n)$ converges absolutely in the region
\[
    \left\{\mathbf{s} = (s_{1},s_{2}) \in \mathbb{C}^{2}:|\Re(s_1)|<\frac{\kappa}{2}-1, \quad
|\Re(s_2)|<\frac{\kappa}{2}-1 \right\}.
    \]
In particular, if we set $\mathbf{s}=\mathbf{0}$ then
\begin{flalign*}
		J_{\Reg}^2(\textbf{0},n)&=\frac{2n^{\kappa-1}B(\kappa/2,\kappa/2)^2}{i^{2\kappa}C_{\kappa}}\sum_{m\geq1}\frac{F(\kappa/2,\kappa/2;\kappa;n/(m+n))}{(m+n)^{\kappa/2}}\sum_{\substack{ad=m\\ a,d\geq1}}\sum_{\substack{bc=m+n\\b,c\geq1,4|c\\\gcd(a,b,c,d)=1}}(t_{\overline{\gamma}}^{-2\kappa} +t_{\underline{\gamma}}^{-2\kappa}).
	\end{flalign*}
\end{lemma}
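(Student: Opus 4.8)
The strategy mirrors that of Lemma \ref{lem, 1st error}, so the main task is to track how the signs of the matrix entries in $\Omega_2$ change the phase factors that appear when applying Lemma \ref{lemma. complex power st}. First I would recall that for $\gamma = \begin{psmallmatrix} a & b \\ c & d \end{psmallmatrix} \in \Omega_2$ we have $bc = m \leq -(n+1) < 0$ and $ad = n+m < 0$, while the condition $4 \mid c$ together with $\gcd(a,b,c,d)=1$ is preserved. Since $ad < 0$ and $bc < 0$, writing everything in terms of positive integers $a,b,c,d \geq 1$ with $4\mid c$ forces exactly one sign in each product; the standard representative choices give the matrices $\overline{\gamma} = \begin{psmallmatrix} -a & -b \\ c & d\end{psmallmatrix}$ and $\underline{\gamma} = \begin{psmallmatrix} a & -b \\ c & -d\end{psmallmatrix}$ listed before the lemma (both lie in $G_4(n)$ because $4\mid c$, so their multipliers $t_{\overline{\gamma}}, t_{\underline{\gamma}}$ are defined). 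Here $ad = m$ runs over $m \geq 1$ and $bc = m+n$; note the roles of $ad$ and $bc$ are swapped relative to $\Omega_1$, which is why the summation ranges in the final formula read $ad = m$, $bc = m+n$.

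Next I would apply \eqref{eq. first reduction for roi}: for each of the two sign patterns, $J_{\Reg}^2(\textbf{s},n)$ becomes $\mathcal{B}(s_2)$ times a sum over these matrices of $t^{-2\kappa}$ times a one-variable integral $\int_0^\infty y_1^{s_1+\kappa/2} j(\gamma,iy_1)^{-\kappa}\gamma(iy_1)^{s_2-\kappa/2}\,dy_1/y_1$. For $\overline{\gamma}$ the relevant integrand involves $(icy_1+d)^{s_2+\kappa/2}(-iay_1-b)^{-s_2+\kappa/2}$ and for $\underline{\gamma}$ it involves $(icy_1-d)^{s_2+\kappa/2}(iay_1-b)^{-s_2+\kappa/2}$; in each case I would use Lemma \ref{lemma. complex power st} to pull out the constants $c^{s_2+\kappa/2}a^{-s_2+\kappa/2}$ along with the appropriate power of $i$ and phase factor $e^{\pm 2\pi i(\cdots)}$, reducing to an integral of the shape $\int_0^\infty y_1^{s_1+\kappa/2}(y_1+\beta)^{-(s_2+\kappa/2)}(y_1+\gamma)^{-(-s_2+\kappa/2)}\,dy_1/y_1$ with $\beta = \pm id/c$, $\gamma = \pm ib/a$. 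Invoking \cite[Equation 3.197(1)]{GradshteynRyzhik2007} with $\nu = s_1+\kappa/2$, $\mu = s_2+\kappa/2$, $\rho = -s_2+\kappa/2$ then produces, for $-\frac{\kappa}{2} < \Re(s_1) < \frac{\kappa}{2}$, a Beta factor $B(s_1+\kappa/2,-s_1+\kappa/2)$ and a hypergeometric factor $F(s_2+\kappa/2, s_1+\kappa/2;\kappa; z)$ in the variable $z = \frac{bc}{ad}\cdot(\text{something}) = \frac{n}{m+n}$ after simplification using $ad = m$, $bc = m+n$. The region of absolute convergence $|\Re(s_1)| < \frac{\kappa}{2}-1$, $|\Re(s_2)| < \frac{\kappa}{2}-1$ follows from \cite[Theorem 2.1]{RR05} exactly as in the previous lemma.

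The final step is the bookkeeping of the phase factors: I would set $\mathbf{s} = \mathbf{0}$ and check that all the accumulated powers of $i$ and exponentials $e^{2\pi i(\cdots)}$ combine (at $\mathbf{s}=\mathbf{0}$, where $-s_2+\kappa/2 = \kappa/2$ and the half-integrality of $\kappa$ matters) so that the $\overline{\gamma}$ and $\underline{\gamma}$ contributions each reduce to the clean common factor $\frac{2n^{\kappa-1}B(\kappa/2,\kappa/2)^2}{i^{2\kappa}C_\kappa}$ times $F(\kappa/2,\kappa/2;\kappa; n/(m+n))/(m+n)^{\kappa/2}$ times $t_{\overline{\gamma}}^{-2\kappa}$ (resp. $t_{\underline{\gamma}}^{-2\kappa}$), with no leftover phase. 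I expect the main obstacle to be precisely this phase-tracking: because $\kappa$ is a half-integer, the branch choices in Lemma \ref{lemma. complex power st} for products like $(icy_1+d)^{s_2+\kappa/2}$ with the two different signs of $d$ do not automatically agree, and one must verify that the phase picked up in the $\underline{\gamma}$ computation (analogous to the $e^{2\pi i(-s_2+\kappa/2)}$ appearing for $J_{\Reg}^{1,2}$) cancels correctly against the power of $i$ coming from $\mathcal{B}(s_2)$ and from rewriting $\gamma(iy_1)$, rather than accumulating an extra factor of $i^{\pm\kappa}$ as it did in the $\Omega_1$ case (where the two terms carried $i^{\kappa}$ and $i^{-\kappa}$ respectively). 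Carefully applying Lemma \ref{lemma, prep for roi} to control $\arg$ of the relevant quantities, and using $ad = m$, $n+m < 0 \Leftrightarrow$ no sign issue with $bc = m+n > 0$, should resolve this and yield the stated identity after substituting $J_{\Reg}^{2,1}$ and $J_{\Reg}^{2,2}$ back into $J_{\Reg}^2(\textbf{s},n)$.
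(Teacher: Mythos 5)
Your plan matches the paper's proof step for step: apply the change of variable $m \mapsto -m-n$ on $\Omega_2$, split according to the sign of $d$ (getting exactly $\overline{\gamma}$ and $\underline{\gamma}$ with all entries positive and with $ad=m$, $bc=m+n$), feed each case through \eqref{eq. first reduction for roi}, Lemma~\ref{lemma. complex power st}, and \cite[Equation 3.197(1)]{GradshteynRyzhik2007}, and then set $\mathbf{s}=\mathbf{0}$. The region of absolute convergence is deduced from \cite[Theorem 2.1]{RR05} exactly as you say.

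The one substantive thing you defer is the phase bookkeeping, and your guess about where the extra phase lives is backwards. It is the $\overline{\gamma}$ piece $J_{\Reg}^{2,1}$, not the $\underline{\gamma}$ piece, that picks up a factor $e^{-2\pi i(-s_2+\kappa/2)}$: writing $\left(\tfrac{-iay_1-b}{icy_1+d}\right)^{-s_2+\kappa/2}(icy_1+d)^{-s_2+\kappa/2}$ as $(-iay_1-b)^{-s_2+\kappa/2}$ costs a phase because $\arg\!\bigl(\tfrac{-iay_1-b}{icy_1+d}\bigr)\in(0,\pi)$ and $\arg(icy_1+d)\in(0,\pi/2)$ always sum to something in $(\pi,3\pi/2)$, so Lemma~\ref{lemma. complex power st} forces the correction $e^{2\pi i(-s_2+\kappa/2)}$. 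For $\underline{\gamma}$, both $icy_1-d$ and $iay_1-b$ lie in the second quadrant and no such wraparound occurs. Tracking the resulting powers of $i$, both $J_{\Reg}^{2,1}$ and $J_{\Reg}^{2,2}$ end up with the common prefactor $\mathcal{B}(s_2)B(s_1+\kappa/2,-s_1+\kappa/2)/i^{3\kappa/2}$ at $\mathbf{s}=\mathbf{0}$, which is why the two terms enter with equal weight $t_{\overline{\gamma}}^{-2\kappa}+t_{\underline{\gamma}}^{-2\kappa}$ rather than with the extra $i^{\pm\kappa}$ you saw in $\Omega_1$. One more small point: you propose $\mu=s_2+\kappa/2$, $\rho=-s_2+\kappa/2$ in \cite[Equation 3.197(1)]{GradshteynRyzhik2007}; with that choice the hypergeometric argument comes out as $-n/m$ instead of $n/(m+n)$, and you would then need a Pfaff transformation to recover the stated form — the paper avoids this by taking $\mu=-s_2+\kappa/2$, $\rho=s_2+\kappa/2$ with $\beta=b/(ia)$, $\gamma=d/(ic)$, giving the argument $1-\gamma/\beta = 1 - ad/(bc) = n/(m+n)$ directly.
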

\begin{proof}
	From the definition of $\Omega_2$ and expression \eqref{eq. first reduction for roi}, we have
	\[J_{\Reg}^2(\textbf{s},n)=\mathcal{B}(s_2)\sum_{m\leq-(n+1)}\sum_{\substack{ad=n+m\\a,d\in\bZ}}\sum_{\substack{bc=m\\c\geq1,4|c,b<0\\\gcd(a,b,c,d)=1}}t_{\gamma}^{-2\kappa}\int_0^{\infty}\frac{y_1^{s_1+\kappa/2}}{j(\gamma,iy_1)^{\kappa}\gamma(iy_1)^{-s_2+\kappa/2}}\frac{\,dy_1}{y_1}.\]
    We apply the change in variable $m\mapsto-m-n$, which results in the new summation range $m \geq 1$  and the conditions $ad=-m, bc=-(m+n).$ The condition $ad=-m$ implies that the integers $a,d$ have opposite signs and so we separate the summation into two cases:
        \begin{enumerate}
        \item $d > 0 \implies a < 0$. We transform variables: $a \mapsto -a, b \mapsto -b$ (associated matrix $\overline{\gamma}$).
        \item $d < 0 \implies a > 0$. We transform variables: $d \mapsto -d, b \mapsto -b$ (associated matrix $\underline{\gamma}$).
    \end{enumerate}

    This decomposition yields

\begin{align*}
	J_{\Reg}^2(\textbf{s},n)
	=&\mathcal{B}(s_2)\sum_{m\geq1}\sum_{\substack{ad=m\\a,d\geq1}}\sum_{\substack{bc=m+n\\b,c\geq1,4|c\\\gcd(a,b,c,d)=1}}t_{\overline{\gamma}}^{-2\kappa}\int_0^{\infty}\frac{y_1^{s_1+\kappa/2}}{(icy_1+d)^{\kappa}\left(\frac{-iay_1-b}{icy_1+d}\right)^{-s_2+\kappa/2}}\frac{\,dy_1}{y_1}\\
	&+\mathcal{B}(s_2)\sum_{m\geq1}\sum_{\substack{ad=m\\a,d\geq1}}\sum_{\substack{bc=m+n\\b,c\geq1,4|c\\\gcd(a,b,c,d)=1}}t_{\underline{\gamma}}^{-2\kappa}\int_0^{\infty}\frac{y_1^{s_1+\kappa/2}}{(icy_1-d)^{\kappa}\left(\frac{iay_1-b}{icy_1-d}\right)^{-s_2+\kappa/2}}\frac{\,dy_1}{y_1}\\
	=&:J_{\Reg}^{2,1}(\textbf{s},n)+J_{\Reg}^{2,2}(\textbf{s},n).
\end{align*}	
\noindent\textbf{Evaluation of $J_{\Reg}^{2,1}(\textbf{s},n)$:}
Similarly to the computations in the prior section, we use Lemma \ref{lemma. complex power st} to handle the factorization of complex powers and obtain
\begin{align*}
J_{\Reg}^{2,1}(\textbf{s},n)
=&\frac{\mathcal{B}(s_2)}{e^{2\pi i(-s_2+\kappa/2)}}\sum_{m\geq1}\sum_{\substack{ad=m\\a,d\geq1}}\sum_{\substack{bc=m+n\\b,c\geq1,4|c\\\gcd(a,b,c,d)=1}}\frac{t_{\overline{\gamma}}^{-2\kappa}}{(ic)^{s_2+\kappa/2}(-ia)^{-s_2+\kappa/2}}\\
&\times\int_0^{\infty}y_1^{s_1+\kappa/2}\left(y_1+\frac{d}{ic}\right)^{-(s_2+\kappa/2)}\left(y_1+\frac{b}{ia}\right)^{-(-s_2+\kappa/2)}\frac{\,dy_1}{y_1}.
\end{align*}

Applying the integral relation \cite[Equation 3.197(1)]{GradshteynRyzhik2007} with $\nu=s_1+\kappa/2,$ $\mu=-s_2+\kappa/2$, $\rho=s_2+\kappa/2$, $\beta=\frac{b}{ia}$ and $\gamma=\frac{d}{ic}$ for $-\frac{\kappa}{2}<\Re(s_1)<\frac{\kappa}{2}$) implies
\begin{flalign*}
	 J_{\Reg}^{2,1}(\textbf{s},n)&=\frac{\mathcal{B}(s_2)B(s_1+\kappa/2,-s_1+\kappa/2)}{i^{s_1-2s_2+3\kappa/2}}\sum_{m\geq1}\frac{F(-s_2+\kappa/2,s_1+\kappa/2;\kappa;n/(m+n))}{(m+n)^{\kappa/2}}\\
  &\phantom{=}\times\sum_{\substack{ad=m\\a,d\geq1}}\sum_{\substack{bc=m+n\\b,c\geq1,4\mid c\\\gcd(a,b,c,d)=1}}t_{\overline{\gamma}}^{-2\kappa}d^{s_1-s_2}c^{-s_1}b^{s_2}.
\end{flalign*}

\noindent\textbf{Evaluation of $J_{\Reg}^{2,2}(\textbf{s},n):$} We again use Lemma \ref{lemma. complex power st} and rewrite
\begin{align*}
J_{\Reg}^{2,2}(\textbf{s},n)
=&\mathcal{B}(s_2)\sum_{m\geq1}\sum_{\substack{ad=m\\a,d\geq1}}\sum_{\substack{bc=m+n\\b,c\geq1,4|c\\\gcd(a,b,c,d)=1}}\frac{t_{\underline{\gamma}}^{-2\kappa}}{(ic)^{s_2+\kappa/2}(ia)^{-s_2+\kappa/2}}\\
&\times\int_0^{\infty}y_1^{s_1+\kappa/2}\left(y_1+\frac{id}{c}\right)^{-(s_2+\kappa/2)}\left(y_1+\frac{ib}{a}\right)^{-(-s_2+\kappa/2)}\frac{\,dy_1}{y_1}.
\end{align*}

Applying the relation \cite[Equation 3.197(1)]{GradshteynRyzhik2007} with $\nu=s_1+\kappa/2,$ $\mu=-s_2+\kappa/2$, $\rho=s_2+\kappa/2$, $\beta=\frac{ib}{a}$ and $\gamma=\frac{id}{c}$ for $-\frac{\kappa}{2}<\Re(s_1)<\frac{\kappa}{2}$) yields
\begin{flalign*}
	 J_{\Reg}^{2,2}(\textbf{s},n)&=\frac{\mathcal{B}(s_2)B(s_1+\kappa/2,-s_1+\kappa/2)}{i^{-s_1+3\kappa/2}}\sum_{m\geq1}\frac{F(-s_2+\kappa/2,s_1+\kappa/2;\kappa;n/(m+n))}{(m+n)^{\kappa/2}}\\
  &\phantom{=}\times\sum_{\substack{ad=m\\a,d\geq1}}\sum_{\substack{bc=m+n\\b,c\geq1,4|c\\\gcd(a,b,c,d)=1}}t_{\underline{\gamma}}^{-2\kappa}d^{s_1-s_2}c^{-s_1}b^{s_2}.
\end{flalign*}

Substituting $ J_{\Reg}^{2,1}(\textbf{s},n)$ and $ J_{\Reg}^{2,2}(\textbf{s},n)$ into $ J_{\Reg}^{2}(\textbf{s},n)$, and setting $\textbf{s}=\textbf{0}$ proves the claim.
\end{proof}

\subsection{The Calculation of $J_{\Reg}^3(\textbf{s},n)$}\label{subsec. reg 3} 
Finally, we treat the case of $\Omega_3$. We introduce the matrices $${}^{\iota}{\gamma}=\begin{pmatrix}
	a&-b\\c&d
\end{pmatrix} \quad \text{ and } \quad  \gamma^{\iota}=\begin{pmatrix}
	-a&-b\\c&-d
\end{pmatrix},$$ belonging to $G_4(n)$. As in the prior sections this implies that the multipliers $t_{{{}^{\iota}\gamma}}$ and $t_{\gamma^{\iota}}$ are well defined.  

\begin{lemma}\label{lem, 3rd error} The orbital integral 
	$J_{\Reg}^3(\textbf{s},n)$ converges absolutely in the region
\[
    \left\{\mathbf{s} = (s_{1},s_{2}) \in \mathbb{C}^{2}:|\Re(s_1)|<\frac{\kappa}{2}-1, \quad
|\Re(s_2)|<\frac{\kappa}{2}-1 \right\}.
    \]
In particular, if we set $\mathbf{s}=\mathbf{0}$ then
	\begin{flalign*}
		J_{\Reg}^3(\textbf{0},n)&=\frac{2n^{\kappa-1}B(\kappa/2,\kappa/2)^2}{i^{2\kappa}C_{\kappa}}\sum_{1\leq m\leq n-1}\frac{F(\kappa/2,\kappa/2;\kappa;n/m)}{m^{\kappa/2}}\sum_{\substack{ad=n-m\\a,d\geq1}}\sum_{\substack{bc=m\\b,c\geq1,4|c\\\gcd(a,b,c,d)=1}}(t_{{}^{\iota}{\gamma}}^{-2\kappa} +t_{{}{\gamma}^{\iota}}^{-2\kappa}).
	\end{flalign*}
\end{lemma}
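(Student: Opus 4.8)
I will follow the template of Lemmas \ref{lem, 1st error} and \ref{lem, 2nd error}, indicating the steps and highlighting what is genuinely new for $\Omega_3$. Starting from \eqref{eq. first reduction for roi} with $j=3$, I first make the substitution $m\mapsto -m$, so that the outer sum runs over $1\le m\le n-1$ and the matrix conditions become $ad=n-m$ and $bc=-m$. Passing to $\overline{\Omega}_3=\Omega_3/\{\pm I\}$ I choose representatives with $c\ge 1$ (hence $4\mid c$); then $bc=-m<0$ forces $b<0$, and reindexing $b\mapsto -b$ gives $b\ge 1$ and $bc=m$. Since $ad=n-m>0$, this splits $\overline{\Omega}_3$ into the sub-family with $a,d\ge 1$, with representative matrix ${}^{\iota}\gamma=\begin{psmallmatrix}a&-b\\c&d\end{psmallmatrix}$, and the sub-family with $a,d\le -1$, where after $a\mapsto-a$, $d\mapsto-d$ the representative is $\gamma^{\iota}=\begin{psmallmatrix}-a&-b\\c&-d\end{psmallmatrix}$. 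Accordingly $J_{\Reg}^3(\mathbf{s},n)=J_{\Reg}^{3,1}(\mathbf{s},n)+J_{\Reg}^{3,2}(\mathbf{s},n)$, one term per sub-family.

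In each term I insert $j({}^{\iota}\gamma,iy_1)=icy_1+d$, ${}^{\iota}\gamma(iy_1)=\tfrac{iay_1-b}{icy_1+d}$ (respectively $j(\gamma^{\iota},iy_1)=icy_1-d$, $\gamma^{\iota}(iy_1)=\tfrac{-iay_1-b}{icy_1-d}$) into \eqref{eq. first reduction for roi}, and use Lemma \ref{lemma. complex power st} twice: once to rewrite the power of $\gamma(iy_1)$ as a quotient of principal powers — here $J_{\Reg}^{3,2}$ picks up an extra phase $e^{2\pi i(-s_2+\kappa/2)}$, since for $\gamma^{\iota}$ the argument of the numerator and that of the reciprocal of the denominator add up into $(-2\pi,-\pi)$ — and once to factor $(icy_1\pm d)^{s_2+\kappa/2}$ and $(\pm iay_1-b)^{-s_2+\kappa/2}$ as $(\mathrm{const})\cdot(y_1+\beta)^{s_2+\kappa/2}(y_1+\gamma)^{-s_2+\kappa/2}$ with $\beta,\gamma$ purely imaginary multiples of $d/c$ and $b/a$. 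The remaining $y_1$-integral has the form of \cite[Equation 3.197(1)]{GradshteynRyzhik2007}; applying it with $\nu=s_1+\kappa/2$ and — exactly as in Lemma \ref{lem, 2nd error} — with the $b/(ia)$-factor labelled so that the hypergeometric argument becomes $1+\tfrac{ad}{bc}=\tfrac{n}{m}$ and the accompanying power becomes $m^{-\kappa/2}$, produces a factor $B(s_1+\kappa/2,-s_1+\kappa/2)$ and a hypergeometric function $F(-s_2+\kappa/2,s_1+\kappa/2;\kappa;n/m)$, plus powers of $i$ and of $a,b,c,d$. Absolute convergence in the stated region follows from \cite[Theorem 2.1]{RR05}. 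Setting $\mathbf{s}=\mathbf{0}$: the two $B$-factors become $B(\kappa/2,\kappa/2)^2$, and the accumulated phases — powers of $i$, and $e^{\pm 2\pi i(-s_2+\kappa/2)}$, which is $i^{\pm 2\kappa}$ at $s_2=0$ — combine with $\mathcal{B}(s_2)$ and with the $(ic)^{s_2+\kappa/2}(\pm ia)^{-s_2+\kappa/2}$ prefactors of the two sub-families, so that each contributes the common factor $\tfrac{2n^{\kappa-1}B(\kappa/2,\kappa/2)^2}{i^{2\kappa}C_{\kappa}}$ and the weight $t_{{}^{\iota}\gamma}^{-2\kappa}$, resp. $t_{\gamma^{\iota}}^{-2\kappa}$; reassembling the $m$-sum and the divisor sums gives the claimed formula.

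The one delicate point — the reason $\Omega_3$ cannot be merged with $\Omega_1,\Omega_2$ — is the branch bookkeeping. Here $ad+bc=(n-m)+m=n$ with $ad,bc\ge 1$, whereas for $\Omega_1,\Omega_2$ one has $|ad-bc|=n$; consequently \emph{both} candidate hypergeometric arguments $n/m$ and $n/(n-m)$ exceed $1$, so $F(\kappa/2,\kappa/2;\kappa;n/m)$ lies on the branch cut $[1,\infty)$ of ${}_2F_1$. I therefore keep $\mathbf{s}$ generic (with $\Re(s_1),\Re(s_2)\ne 0$) throughout the manipulations, so that all the complex powers have arguments strictly inside $(-\pi,\pi)$ and the hypergeometric function is evaluated slightly off the cut, and specialize to $\mathbf{s}=\mathbf{0}$ only at the end. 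The value of $F(\kappa/2,\kappa/2;\kappa;n/m)$ that appears is by construction the one supplied by the absolutely convergent $y_1$-integral, and this is the boundary value that gets re-expressed through Legendre functions in \S\ref{subsec. majorization}.
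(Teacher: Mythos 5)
Your proposal follows the paper's own proof step by step: the change of variables $m\mapsto -m$, $b\mapsto -b$ to make all outer parameters positive, the split according to $\sgn(d)$ (and hence $\sgn(a)$, since $ad=n-m>0$) into the ${}^{\iota}\gamma$ and $\gamma^{\iota}$ sub-families, the factorization of $j(\gamma,iy_1)^{\kappa}\gamma(iy_1)^{-s_2+\kappa/2}$ via Lemma \ref{lemma. complex power st} with the $e^{2\pi i(-s_2+\kappa/2)}$ phase on the $\gamma^{\iota}$ branch, the application of \cite[Eq.~3.197(1)]{GradshteynRyzhik2007} yielding a $B(s_1+\kappa/2,-s_1+\kappa/2)$ factor and ${}_2F_1$ with argument $1+\frac{ad}{bc}=\frac{n}{m}$, the reference to \cite[Theorem~2.1]{RR05} for absolute convergence, and finally the specialization to $\mathbf{s}=\mathbf{0}$ where the accumulated powers of $i$ collapse to $i^{-2\kappa}$.

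Two small remarks. First, there is a cosmetic slip in your bookkeeping: for $J_{\Reg}^{3,1}$ the paper takes $\beta=ib/a$ (not $b/(ia)$), and $\beta=b/(ia)$ is used for $J_{\Reg}^{3,2}$; with $\beta=b/(ia)$ one would get $1-\gamma/\beta = 1-\tfrac{ad}{bc}$, which is not $n/m$. You clearly intend the correct assignment since you write the argument as $1+\tfrac{ad}{bc}$, so this is a notational hiccup rather than a mathematical error. Second, your paragraph on the branch-cut issue is a genuine improvement over what the paper writes: the paper applies GR~3.197(1) at argument $n/m\in(1,\infty)$ without comment, whereas you observe that the value produced by the absolutely convergent $y_1$-integral is unambiguous and is precisely the boundary value that later gets re-expressed through $Q_{\kappa/2-1}\bigl(\tfrac{2m}{n}-1\bigr)$ in \S\ref{subsec. majorization}; this is worth saying and makes the argument more defensible, though it is implicit in the paper. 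Overall, your proof is correct and essentially the paper's.
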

\begin{proof}
	From the definition of $\Omega_3$ and expression \eqref{eq. first reduction for roi}, we obtain
	\[J_{\Reg}^3(\textbf{s},n)=\mathcal{B}(s_2)\sum_{-(n-1)\leq m\leq -1}\sum_{\substack{ad=n+m\\a,d\in\bZ}}\sum_{\substack{bc=m\\c\geq1,4|c,b<0\\\gcd(a,b,c,d)=1}}t_{\gamma}^{-2\kappa}\int_0^{\infty}\frac{y_1^{s_1+\kappa/2}}{j(\gamma,iy_1)^{\kappa}\gamma(iy_1)^{-s_2+\kappa/2}}\frac{\,dy_1}{y_1}.\]
	Changing the variable $m\mapsto-m$ and $b\mapsto -b$ yields the new summation range $1 \leq m \leq n-1$ and conditions $ad=n-m$, $b\geq 1$. We divide the summation again into two cases, depending on the sign of $d$:
    \begin{enumerate}
        \item $d > 0 \implies a > 0$ (associated matrix ${}^{\iota}{\gamma}$).
        \item $d < 0 \implies a < 0$. We map $a \mapsto -a, d \mapsto -d$ (associated matrix $\gamma^{\iota}$).
    \end{enumerate}
    This decomposition yields
	\begin{align*}
	J_{\Reg}^3(\textbf{s},n)
	=&\mathcal{B}(s_2)\sum_{1\leq m\leq n-1}\sum_{\substack{ad=n-m\\a,d\geq1}}\sum_{\substack{bc=m\\b,c\geq1,4|c\\\gcd(a,b,c,d)=1}}t_{{}^{\iota}\gamma}^{-2\kappa}\int_0^{\infty}\frac{y_1^{s_1+\kappa/2}}{(icy_1+d)^{\kappa}\left(\frac{iay_1-b}{icy_1+d}\right)^{-s_2+\kappa/2}}\frac{\,dy_1}{y_1}\\
	&+\mathcal{B}(s_2)\sum_{1\leq m\leq n-1}\sum_{\substack{ad=n-m\\a,d\geq1}}\sum_{\substack{bc=m\\b,c\geq1,4|c\\\gcd(a,b,c,d)=1}}t_{\gamma^{\iota}}^{-2\kappa}\int_0^{\infty}\frac{y_1^{s_1+\kappa/2}}{(icy_1-d)^{\kappa}\left(\frac{-iay_1-b}{icy_1-d}\right)^{-s_2+\kappa/2}}\frac{\,dy_1}{y_1}\\
	=:& J_{\Reg}^{3,1}(\textbf{s},n)+J_{\Reg}^{3,2}(\textbf{s},n).
\end{align*}
\noindent\textbf{Evaluation of $J_{\Reg}^{3,1}(\textbf{s},n)$:}
Using Lemma \ref{lemma. complex power st}, we obtain
\begin{align*}
J_{\Reg}^{3,1}(\textbf{s},n)=&\mathcal{B}(s_2)\sum_{1\leq m\leq n-1}\sum_{\substack{ad=n-m\\a,d\geq1}}\sum_{\substack{bc=m\\b,c\geq1,4|c\\\gcd(a,b,c,d)=1}}\frac{t_{{}^{\iota}{\gamma}}^{-2\kappa}}{(ic)^{s_2+\kappa/2}(ia)^{-s_2+\kappa/2}}\\
&\times\int_0^{\infty}y_1^{s_1+\kappa/2}\left(y_1+\frac{d}{ic}\right)^{-(s_2+\kappa/2)}\left(y_1+\frac{ib}{a}\right)^{-(-s_2+\kappa/2)}\frac{\,dy_1}{y_1}.
\end{align*}
Invoking the relation \cite[Equation 3.197(1)]{GradshteynRyzhik2007} with $\nu=s_1+\kappa/2,$ $\mu=-s_2+\kappa/2$, $\rho=s_2+\kappa/2$, $\beta=\frac{ib}{a}$ and $\gamma=\frac{d}{ic}$ for $-\frac{\kappa}{2}<\Re(s_1)<\frac{\kappa}{2}$, implies
\begin{flalign*}
	 J_{\Reg}^{3,1}(\textbf{s},n)&=\frac{\mathcal{B}(s_2)B(s_1+\kappa/2,-s_1+\kappa/2)}{i^{s_1-2s_2+3\kappa/2}}\sum_{1\leq m\leq n-1}\frac{F(-s_2+\kappa/2,s_1+\kappa/2;\kappa;n/m)}{m^{\kappa/2}}\\
  &\phantom{=}\times\sum_{\substack{ad=n-m\\a,d\geq1}}\sum_{\substack{bc=m\\b,c\geq1,4|c\\\gcd(a,b,c,d)=1}}t_{{}^{\iota}{\gamma}}^{-2\kappa}d^{s_1-s_2}c^{-s_1}b^{s_2}.
\end{flalign*}
\noindent\textbf{Evaluation of $J_{\Reg}^{3,2}(\textbf{s},n)$:}
Finally, we investigate $J_{\Reg}^{3,2}(\textbf{s},n)$. We proceed with a similar computation as in previous sections, using Lemma \ref{lemma. complex power st} and obtain
\begin{align*}
J_{\Reg}^{3,2}(\textbf{s},n)=&\frac{\mathcal{B}(s_2)}{e^{2\pi i(-s_2+\kappa/2)}}\sum_{1\leq m\leq n-1}\sum_{\substack{ad=n-m\\a,d\geq1}}\sum_{\substack{bc=m\\b,c\geq1,4|c\\\gcd(a,b,c,d)=1}}t_{{\gamma}^{\iota}}^{-2\kappa}\int_0^{\infty}\frac{y_1^{s_1+\kappa/2}}{(icy_1-d)^{s_2+\kappa/2}(-iay_1-b)^{-s_2+\kappa/2}}\frac{\,dy_1}{y_1}\\
=&\frac{\mathcal{B}(s_2)}{i^{-4s_2+2\kappa}}\sum_{m\geq1}\sum_{\substack{ad=m\\a,d\geq1}}\sum_{\substack{bc=m+n\\b,c\geq1,4|c\\\gcd(a,b,c,d)=1}}\frac{t_{\underline{\gamma}}^{-2\kappa}}{(ic)^{s_2+\kappa/2}(-ia)^{-s_2+\kappa/2}}\\
&\times\int_0^{\infty}y_1^{s_1+\kappa/2}\left(y_1+\frac{id}{c}\right)^{-(s_2+\kappa/2)}\left(y_1+\frac{b}{ia}\right)^{-(-s_2+\kappa/2)}\frac{\,dy_1}{y_1}.
\end{align*}	
Applying \cite[Equation 3.197(1)] {GradshteynRyzhik2007} with $\nu=s_1+\kappa/2,$ $\mu=-s_2+\kappa/2$, $\rho=s_2+\kappa/2$, $\beta=\frac{b}{ia}$ and $\gamma=\frac{id}{c}$ for $-\frac{\kappa}{2}<\Re(s_1)<\frac{\kappa}{2}$ gives
\begin{align*}
	 J_{\Reg}^{3,2}(\textbf{s},n)=&\frac{\mathcal{B}(s_2)B(s_1+\kappa/2,-s_1+\kappa/2)}{i^{-s_1+3\kappa/2}}\sum_{1\leq m\leq n-1}\frac{F(-s_2+\kappa/2,s_1+\kappa/2;\kappa;n/m)}{m^{\kappa/2}}\\
  &\times\sum_{\substack{ad=n-m\\a,d\geq1}}\sum_{\substack{bc=m\\b,c\geq1,4|c\\\gcd(a,b,c,d)=1}}t_{{}{\gamma}^{\iota}}^{-2\kappa}d^{s_1-s_2}c^{-s_1}b^{s_2}.
\end{align*}
We conclude the proof by substituting $ J_{\Reg}^{3,1}(\textbf{s},n)$ and $ J_{\Reg}^{3,2}(\textbf{s},n)$ into $ J_{\Reg}^{3}(\textbf{s},n)$ and setting $\textbf{s}=\textbf{0}$.
\end{proof}

\subsection{Estimation of $J_{\Reg}
(\mathbf{0},n)$}\label{subsec. majorization}
In this section, we establish an upper bound for the regular orbital integral. In particular, our primary aim is to prove the following result:
\begin{prop}\label{prop. est of error general}
	Let $n$ be an odd square integer. For any $\varepsilon > 0$, the regular orbital integral satisfies:
	\[J_{\Reg}(\textbf{0},n)\ll_\varepsilon \frac{n^{\frac{\kappa}{2}+\varepsilon}B(\kappa/2,\kappa/2)}{|C_{\kappa}|\kappa^{\frac{1}{2}}}.\]
\end{prop}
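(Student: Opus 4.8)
The plan is to start from the explicit formulas for $J_{\Reg}^1(\mathbf 0,n)$, $J_{\Reg}^2(\mathbf 0,n)$, $J_{\Reg}^3(\mathbf 0,n)$ furnished by Proposition~\ref{prop, the error term in RTF}, together with the decomposition \eqref{eq. decom of regular orbital integral}, and to bound each of the three pieces by the triangle inequality. In every case the summand factors as a multiplier $t_\gamma^{-2\kappa}$ (or $t_{\widehat\gamma}^{-2\kappa}$, $t_{\overline\gamma}^{-2\kappa}$, etc.) of modulus $1$, an arithmetic weight, and a value $F(\kappa/2,\kappa/2;\kappa;z)$ of the Gauss hypergeometric function, where $z$ is a ratio of the shape $n/(m+n)$ in the cases $J^1,J^2$ and $n/m$ in the case $J^3$. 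Equivalently, one step earlier (before invoking \cite[Eq.~3.197(1)]{GradshteynRyzhik2007}), the hypergeometric value is a convergent elementary integral: at $\mathbf s=\mathbf 0$,
\[
\int_0^\infty y^{\kappa/2-1}\,j(\gamma,iy)^{-\kappa}\gamma(iy)^{-\kappa/2}\,dy \;=\; \int_0^\infty y^{\kappa/2-1}(icy+d)^{-\kappa/2}(iay+b)^{-\kappa/2}\,dy,
\]
up to the signs of the entries dictated by the matrices of \S\ref{subsubsec: transformation-matrices}, as in \eqref{eq. first reduction for roi}. Since $|t_\gamma|=1$ and the number of quadruples $(a,b,c,d)$ with $ad=A$, $bc=B$, $4\mid c$ and $\gcd(a,b,c,d)=1$ is $O_\varepsilon((AB)^\varepsilon)$, the whole estimate reduces to bounding weighted sums over $m$ of $|F(\kappa/2,\kappa/2;\kappa;z_m)|$, i.e.\ (after interchanging the $y$-integral with the sum over $m$, legitimate by the absolute convergence asserted in Proposition~\ref{prop, the error term in RTF}) sums of those elementary integrals.

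The heart of the matter is a bound for $F(\kappa/2,\kappa/2;\kappa;z)$ that is uniform in $\kappa$ and sharp in $z$. Following the route indicated in the introduction, I would use a quadratic/Euler transformation to express $F(\kappa/2,\kappa/2;\kappa;z)$ through an associated Legendre function of degree and order both comparable to $\kappa/2$ --- the Ferrers (Legendre-on-the-cut) function on $(-1,1)$ for $J^1,J^2$ (where $0<z<1$) and the Legendre function of the second kind on $(1,\infty)$ for $J^3$ (where $z>1$, so an analytic continuation past the branch point is required) --- and then invoke the classical uniform estimates for Legendre functions with large parameters. The estimate splits into regions according to whether $z_m$ is bounded away from $1$ (i.e.\ $m$ comparable to or larger than $n$), where the Legendre function is of moderate size, or $z_m\to 1$ (small $m$), where one sits in the transition regime near the turning point and the function is larger but still controlled. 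Alternatively --- and this is the cleanest way to locate the power of $\kappa$ --- one can bound the elementary integral directly: after the substitution $y\mapsto y\sqrt{bd/ac}$ it collapses to $\int_0^\infty v^{\kappa/2-1}\big(bc\,v^2+ad\big)^{-\kappa/4}\big(ad\,v^2+bc\big)^{-\kappa/4}\,dv$ (for the $J^3$ integral, similarly for the others), and Laplace's method at the saddle $v=1$ gives a bound $\ll(ad+bc)^{1-\kappa/2}(\kappa\,ad\,bc)^{-1/2}$; the factor $\kappa^{-1/2}$ is precisely the width of the Gaussian peak, and this factor, together with $B(\kappa/2,\kappa/2)=\Gamma(\kappa/2)^2/\Gamma(\kappa)$ already present in $\mathcal B(0)$, produces the $\kappa^{-1/2}$ in the statement.

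Granting such an estimate the assembly is elementary. For $J_{\Reg}^3(\mathbf 0,n)$ one has $ad+bc=n$, so the bound above gives $|J_{\Reg}^3(\mathbf 0,n)|\ll|\mathcal B(0)|\,n^\varepsilon\,n^{1-\kappa/2}\kappa^{-1/2}\sum_{1\le m\le n-1}\big(m(n-m)\big)^{-1/2}$; since $\sum_{m=1}^{n-1}\big(m(n-m)\big)^{-1/2}\ll 1$ and $|\mathcal B(0)|=2n^{\kappa-1}B(\kappa/2,\kappa/2)/|C_\kappa|$, this is $\ll_\varepsilon n^{\kappa/2+\varepsilon}B(\kappa/2,\kappa/2)/(|C_\kappa|\kappa^{1/2})$. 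For $J_{\Reg}^1(\mathbf 0,n)$ (and likewise $J_{\Reg}^2$) one has $ad+bc=n+2m$, and summing $(n+2m)^{1-\kappa/2}\big(\kappa\,(n+m)m\big)^{-1/2}$ against the divisor weight over all $m\ge 1$ is dominated by the range $m\ll n$, which again contributes $\ll_\varepsilon n^{\kappa/2+\varepsilon}B(\kappa/2,\kappa/2)/(|C_\kappa|\kappa^{1/2})$, the tail $m\gg n$ being strictly smaller because of the gain $2^{-\kappa/2}$ there. Adding the three pieces yields the proposition.

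The main obstacle is the uniform hypergeometric/Legendre estimate: one must control $F(\kappa/2,\kappa/2;\kappa;z)$ simultaneously in $\kappa$ and in $z\in(0,1)$ --- and, for $J^3$, in $z\in(1,\infty)$ through the branch point $z=1$ --- with enough precision to retain the factor $\kappa^{-1/2}$, since a bound that loses this half-power (or that mishandles the $z>1$ continuation) would be too weak for the amplification argument behind Theorem~\ref{thm. subconvexity theorem}. The remaining ingredients --- the divisor-sum bounds, the justification of the interchange of summation and integration, and the elementary sums over $m$ --- are routine.
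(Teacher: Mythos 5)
Your proposal is correct and follows the same overall skeleton as the paper: decompose $J_{\Reg}=J_{\Reg}^1+J_{\Reg}^2+J_{\Reg}^3$ via Proposition~\ref{prop, the error term in RTF}, use $|t_\gamma|=1$ and the divisor bound to strip away the arithmetic, and reduce everything to a uniform estimate for the remaining analytic factor in each $m$-term. Your route (a), going through the Gauss-to-Legendre transformation and classical large-parameter bounds for $Q_\nu$, is exactly what the paper does: equation \eqref{eq. hypergeometric function and legrende function} converts $F(\kappa/2,\kappa/2;\kappa;\cdot)$ into $Q_{\kappa/2-1}$ with argument $2m/n\pm1$, and the paper then bounds $Q_{\kappa/2-1}$ using \cite[8.712]{GradshteynRyzhik2007} (for argument $>1$, with a dyadic split in $m$) and \cite[8.724(2)]{GradshteynRyzhik2007} (for argument in $(-1,1)$). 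Your assembly of the three pieces, including the observation that $J_{\Reg}^3$ (where $ad+bc=n$ exactly) is the dominant contribution and that $\sum_{m=1}^{n-1}(m(n-m))^{-1/2}\ll 1$, is the same computation and gives the right answer.

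Your route (b) --- bounding the elementary $y$-integral directly after the substitution $y\mapsto y\sqrt{bd/(ac)}$ --- is a genuinely different and somewhat cleaner way to produce the key estimate $\ll(ad+bc)^{1-\kappa/2}(\kappa\,ad\,bc)^{-1/2}$, and it bypasses the hypergeometric/Legendre machinery entirely. The one caveat worth flagging is that ``Laplace's method at the saddle $v=1$'' gives an \emph{asymptotic}, not automatically a uniform \emph{upper bound}, and the asymptotic form $e^{\lambda\phi(1)}\sqrt{2\pi/(\lambda|\phi''(1)|)}$ is only pointwise accurate when $\kappa\,|\phi''(1)|=4\kappa\,ad\,bc/(ad+bc)^2\gg 1$, which fails in the extreme ranges (e.g.\ $bc=1$, $ad\gg\kappa$, which does occur in the infinite sums defining $J_{\Reg}^1$ and $J_{\Reg}^2$). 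To turn it into a rigorous bound valid uniformly in $a,b,c,d$, one should note that the substitution $v=e^u$ turns the integral into $(ad+bc)^{-\kappa/2}\int_{-\infty}^{\infty}\bigl(1+\alpha\sinh^2u\bigr)^{-\kappa/4}du$ with $\alpha=4\,ad\,bc/(ad+bc)^2$, and then use $\sinh^2 u\geq u^2$ together with $\int_{\mathbb{R}}(1+w^2)^{-\kappa/4}dw\asymp\kappa^{-1/2}$ (valid for $\kappa>2$); this reproduces your claimed bound as an honest inequality. Interestingly, this direct route gives a slightly sharper pointwise dependence on $ad\cdot bc$ (exponent $-1/2$) than the paper's Legendre bound (exponent $-1/4$ in the $J_{\Reg}^3$ case), but after summing over $m$ the two approaches yield the same final result, since the $\kappa^{-1/2}$ and the power of $n$ come out identically.
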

\begin{proof}
	Notice that 
	\[J_{\Reg}(\textbf{s},n)=J_{\Reg}^1(\textbf{s},n)+J_{\Reg}^2(\textbf{s},n)+J_{\Reg}^3(\textbf{s},n).\]
	The result follows immediately from the bounds on the individual components given in Lemma \ref{lem. 1 2 term in error} and Lemma \ref{lem. 3 term in error}. 
\end{proof}

The main contribution to the error term will arise from $J_{\Reg}^3(\textbf{0},n)$, and so the following, rather crude, bound for $J^1_{\Reg}(\textbf{0},n)+J^2_{\Reg}(\textbf{0},n)$ will suffice for our purposes:

\begin{lemma}\label{lem. 1 2 term in error}
	Let $J_{\Reg}^1(\textbf{s},n)$ and  $J_{\Reg}^2(\textbf{s},n)$ be defined as in Lemma \ref{lem, 1st error} and Lemma \ref{lem, 2nd error} respectively. Then for any $\varepsilon > 0,$
	\[J_{\Reg}^1(\textbf{0},n)+J_{\Reg}^2(\textbf{0},n)\ll_\varepsilon \frac{n^{\frac{\kappa}{2}+\varepsilon}B(\kappa/2,\kappa/2)}{|C_{\kappa}|\kappa^{\frac{1}{2}}}.\]
\end{lemma}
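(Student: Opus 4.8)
The plan is to bound $J_{\Reg}^1(\textbf{0},n)$ and $J_{\Reg}^2(\textbf{0},n)$ using their closed forms from Lemmas \ref{lem, 1st error} and \ref{lem, 2nd error}. In both cases the expression is, up to the prefactor $\frac{2n^{\kappa-1}B(\kappa/2,\kappa/2)^2}{i^{2\kappa}C_{\kappa}}$, a sum over $m\geq 1$ of $(m+n)^{-\kappa/2}F(\kappa/2,\kappa/2;\kappa;n/(m+n))$ weighted by an inner divisor sum of the root numbers $t_\gamma^{\pm 1}$, each of modulus $1$. So the first step is to bound the inner sums trivially: the number of $(a,b,c,d)$ with $ad=n+m$, $bc=m$ (resp. $bc=m+n$), $4\mid c$, $\gcd(a,b,c,d)=1$ is $\ll_\varepsilon (n+m)^\varepsilon m^\varepsilon \ll_\varepsilon (nm)^\varepsilon$ by the divisor bound, and since $|t_\gamma|=1$ the whole inner sum is $O_\varepsilon((nm)^\varepsilon)$.

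Next I would control the hypergeometric factor. Since $0<n/(m+n)<1$ for $m\geq 1$, and $F(\kappa/2,\kappa/2;\kappa;x)$ is a positive increasing function of $x\in[0,1)$, the issue is the behaviour as $x\to 1^-$, i.e. for small $m$. One has the logarithmic singularity $F(\kappa/2,\kappa/2;\kappa;x)\sim -\frac{\Gamma(\kappa)}{\Gamma(\kappa/2)^2}\log(1-x)$ as $x\to 1$; more usefully, combining the Euler integral representation $F(\kappa/2,\kappa/2;\kappa;x)=\frac{1}{B(\kappa/2,\kappa/2)}\int_0^1 t^{\kappa/2-1}(1-t)^{\kappa/2-1}(1-xt)^{-\kappa/2}\,dt$ with $1-x=m/(m+n)$ gives $F(\kappa/2,\kappa/2;\kappa;n/(m+n))\ll \frac{1}{B(\kappa/2,\kappa/2)}\left(\frac{m+n}{m}\right)^{\kappa/2}\cdot\frac{1}{\kappa}$ type estimates for the tail, but more simply $(m+n)^{-\kappa/2}F(\kappa/2,\kappa/2;\kappa;n/(m+n))\ll \frac{1}{B(\kappa/2,\kappa/2)}\cdot m^{-\kappa/2}$ after absorbing the $(m+n)^{\kappa/2}$. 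Then the sum over $m\geq 1$ of $m^{-\kappa/2+\varepsilon}$ converges (since $\kappa>4$) and is $O(1)$; more carefully one wants the extra saving $\kappa^{-1/2}$, which should come from the $m=1$ term where $F\asymp \kappa^{1/2}/B(\kappa/2,\kappa/2)$ (from $\int_0^1 t^{\kappa/2-1}(1-t)^{\kappa/2-1}(1-t/2)^{-\kappa/2}$, a Laplace-type integral concentrating near $t=1$) — wait, one should be careful here; the cleanest route is to bound $F(\kappa/2,\kappa/2;\kappa;n/(m+n))\ll B(\kappa/2,\kappa/2)^{-1}(1-n/(m+n))^{-\kappa/2}\cdot C$ uniformly with $C$ polynomial in $\kappa$, then note the prefactor already carries $B(\kappa/2,\kappa/2)^2$ so one copy cancels, leaving $B(\kappa/2,\kappa/2)$, and $n^{\kappa-1}\cdot n^{-\kappa/2}\cdot$(something) $\ll n^{\kappa/2+\varepsilon}$; the $\kappa^{-1/2}$ comes from a stationary-phase/Laplace analysis of the remaining $y_1$-integral before passing to $F$, matching the bound in \S\ref{subsec. majorization} for $J^3_{\Reg}$.

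Concretely, I would write
\[
J_{\Reg}^1(\textbf{0},n)+J_{\Reg}^2(\textbf{0},n)\ll \frac{n^{\kappa-1}B(\kappa/2,\kappa/2)^2}{|C_\kappa|}\sum_{m\geq 1}\frac{F(\kappa/2,\kappa/2;\kappa;n/(m+n))}{(m+n)^{\kappa/2}}\cdot (nm)^\varepsilon,
\]
then split the sum at $m=n$: for $m>n$ use $F(\kappa/2,\kappa/2;\kappa;n/(m+n))=1+O_\kappa(n/(m+n))$ together with $(m+n)^{-\kappa/2}\leq m^{-\kappa/2}$ to get a convergent tail $\ll n^\varepsilon$; for $m\leq n$ use the Euler integral bound $F(\kappa/2,\kappa/2;\kappa;n/(m+n))\ll B(\kappa/2,\kappa/2)^{-1}\kappa^{-1/2}((m+n)/m)^{\kappa/2}$ (this is where one extracts the $\kappa^{-1/2}$ saving via Laplace's method applied to the $t$-integral, the concentration being near $t=1$ with width $\asymp \kappa^{-1}$, giving an extra $\kappa^{-1/2}$ relative to the crude $B(\kappa/2,\kappa/2)^{-1}((m+n)/m)^{\kappa/2}$), so that $(m+n)^{-\kappa/2}F\ll B(\kappa/2,\kappa/2)^{-1}\kappa^{-1/2}m^{-\kappa/2}$ and the sum over $1\leq m\leq n$ is again $\ll B(\kappa/2,\kappa/2)^{-1}\kappa^{-1/2}$. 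Combining, one copy of $B(\kappa/2,\kappa/2)$ cancels, $n^{\kappa-1}$ meets no compensating negative power as written — so I would keep the $(m+n)^{-\kappa/2}$ attached to $n^{\kappa-1}$ only through the $m\leq n$ range where $(m+n)^{-\kappa/2}\geq (2n)^{-\kappa/2}$, yielding the stated $n^{\kappa-1-\kappa/2+\varepsilon}=n^{\kappa/2-1+\varepsilon}\ll n^{\kappa/2+\varepsilon}$; the main obstacle is making the Laplace-method estimate for $F$ near $x=1$ clean enough to produce exactly the $\kappa^{-1/2}$ factor uniformly in $m$, which I would handle by the substitution $t=1-u/\kappa$ in the Euler integral and a routine dominated-convergence argument, identical in spirit to the treatment of $J^3_{\Reg}(\textbf{0},n)$ in Lemma \ref{lem. 3 term in error}.
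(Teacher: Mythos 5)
Your first step (divisor bound, $|t_\gamma|=1$) matches the paper's proof exactly. After that, though, the argument has a genuine gap that you half-notice but do not repair.

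The paper's key move is the transformation
\[
F\!\left(\tfrac{\kappa}{2},\tfrac{\kappa}{2};\kappa;\tfrac{n}{m+n}\right)=\frac{2}{B(\kappa/2,\kappa/2)}\left(\frac{m+n}{n}\right)^{\kappa/2}Q_{\frac{\kappa}{2}-1}\!\left(\frac{2m}{n}+1\right),
\]
which shows that $(m+n)^{-\kappa/2}F=\frac{2}{B\,n^{\kappa/2}}Q_{\frac{\kappa}{2}-1}(2m/n+1)$. The whole point is that the factor $n^{-\kappa/2}$ now appears \emph{explicitly}, and the remaining task is to bound $Q_{\kappa/2-1}(2m/n+1)$ --- via its $\int_{-1}^1$ integral representation, split at $m\asymp n$ --- by $n/(m\kappa)$ for $m\le 4n$ and by an exponentially small quantity for $m>4n$, giving $\sum_m m^\varepsilon\lvert Q\rvert\ll n^{1+\varepsilon}/\kappa^{1/2}$. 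Combined with the prefactor $n^{\kappa-1}B^2/|C_\kappa|$ and the $B^{-1}n^{-\kappa/2}$ from the identity, this lands exactly on $n^{\kappa/2+\varepsilon}B/(|C_\kappa|\kappa^{1/2})$.

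Your bound $(m+n)^{-\kappa/2}F\ll B^{-1}\kappa^{-1/2}m^{-\kappa/2}$ (from the Euler integral, keeping only the saddle point) is a valid over-estimate, but it discards the $n^{-\kappa/2}$: plugging it in gives $\frac{n^{\kappa-1}B}{|C_\kappa|\kappa^{1/2}}$, which is off by $n^{\kappa/2-1}$. You observe this ("$n^{\kappa-1}$ meets no compensating negative power as written") but the proposed fix is not coherent: you cannot simultaneously absorb $(m+n)^{-\kappa/2}$ into the hypergeometric bound \emph{and} use it to cancel powers of $n$; and the inequality $(m+n)^{-\kappa/2}\ge(2n)^{-\kappa/2}$ on the range $m\le n$ is a lower bound, which is useless here. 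Likewise, in the tail $m>n$ the statement "$F=1+O_\kappa(n/(m+n))$" hides a constant that grows exponentially in $\kappa$, so as written it does not give a uniform bound. What is missing is precisely the explicit algebraic extraction of $n^{-\kappa/2}$ that the Legendre-function identity provides; without it, the saddle-point heuristic alone does not close the argument.
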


\begin{proof}
We restrict our attention to bounding  $J_{\Reg}^1(\textbf{0},n)$, the case of $J_{\Reg}^2(\textbf{0},n)$ being treated similarly. Since $|t_{\gamma}|=1$ for $\gamma\in G_4(n)$, and
$$\sum_{\substack{ad=m\\ a,d\geq1}}\sum_{\substack{bc=m+n\\b,c\geq1,4|c\\\gcd(a,b,c,d)=1}}1 \ll_\varepsilon m^\varepsilon,$$ we have
  \begin{equation}\label{equ:preliminary_hypergeo_J_reg_estimate}
  J_{\Reg}^1(\textbf{0},n)\ll_\varepsilon \frac{n^{\kappa-1+\varepsilon}B(\kappa/2,\kappa/2)^2}{|C_{\kappa}|}\sum_{m\geq1}\frac{m^{\varepsilon}}{(m+n)^{\kappa/2}}|F(\kappa/2,\kappa/2;\kappa;n/(m+n))|.\end{equation}
Next we relate the hypergeometric function to the Legendre function of the second kind, $Q_\nu(z).$ 
Applying the transformation formulas \cite[9.134(1) and 8.820(2)]{GradshteynRyzhik2007} and the duplication formula for the Gamma function, we obtain
	\begin{equation}\label{eq. hypergeometric function and legrende function}
		F\left(\frac{\kappa}{2},\frac{\kappa}{2};\kappa;1-z\right)=\frac{2(1-z)^{-\frac{\kappa}{2}}}{B\left(\frac{\kappa}{2},\frac{\kappa}{2}\right)}Q_{\frac{\kappa}{2}-1}\left(\frac{1+z}{1-z}\right).
	\end{equation} 
 Setting $z=\frac{m}{m+n}$ and substituting relation \eqref{eq. hypergeometric function and legrende function} into inequality \eqref{equ:preliminary_hypergeo_J_reg_estimate} yields
\begin{equation}\label{eq. J1 by legerdene}
J_{\Reg}^1(\textbf{0},n)\ll_\varepsilon \frac{n^{\frac{\kappa}{2}-1+\varepsilon}B(\kappa/2,\kappa/2)}{|C_{\kappa}|}\sum_{m\geq1}m^{\varepsilon}\left|Q_{\frac{\kappa}{2}-1}\left(\frac{2m}{n}+1\right)\right|.
\end{equation}
To estimate the Legendre function we use the following integral representation given in \cite[Equation 8.712]{GradshteynRyzhik2007}:
\[Q_{\frac{\kappa}{2}-1}\left(\frac{2m}{n}+1\right)= \frac{1}{2^{\frac{\kappa}{2}}}\int_{-1}^1\frac{(1-t^2)^{\frac{\kappa}{2}-1}}{\left(\frac{2m}{n}+1-t\right)^{\frac{\kappa}{2}}}\,dt.\]

We now split the analysis of this integral into two ranges.

\textbf{Case 1: $m \leq 4n$.} We have

	 \[\int_{-1}^1\frac{(1-t^2)^{\frac{\kappa}{2}-1}}{\left(\frac{2m}{n}+1-t\right)^{\frac{\kappa}{2}}}\,dt\ll \frac{n}{m}\int_{-1}^1\frac{(1-t^2)^{\frac{\kappa}{2}-1}}{\left(1-t\right)^{\frac{\kappa}{2}-1}}\,dt\ll \frac{n}{m}\frac{2^{\frac{\kappa}{2}}}{\kappa}.\]
	 Consequently,
	 \[Q_{\frac{\kappa}{2}-1}\left(\frac{2m}{n}+1\right)\ll \frac{n}{m\kappa}.\]
     
\textbf{Case 2: $m > 4n$.}
	 Suppose that $2^{\ell}n< m\leq 2^{\ell+1}n$ with $\ell\geq2.$ Then
	 \[\int_{-1}^1\frac{(1-t^2)^{\frac{\kappa}{2}-1}}{\left(\frac{2m}{n}+1-t\right)^{\frac{\kappa}{2}}}\,dt\ll \frac{1}{2^{\frac{\kappa}{2}\ell}}\int_{-1}^{1}(1-t^2)^{\frac{\kappa}{2}-1}\,dt.\]
	 This integral is even. So after the change of variable $t\mapsto\sqrt{t},$ together with an application of Stirling's formula, we obtain
	 \[\int_{-1}^{1}(1-t^2)^{\frac{\kappa}{2}-1}\,dt=\int_0^1 (1-t)^{\frac{\kappa}{2}-1}t^{-\frac{1}{2}}\,dt=\frac{\Gamma\left(\frac{\kappa}{2}\right)\Gamma\left(\frac{1}{2}\right)}{\Gamma\left(\frac{\kappa+1}{2}\right)}\ll \frac{1}{\kappa^{1/2}}.\]
	 When $2^\ell n < m \leq 2^{\ell+1}n$, the above computations result in
	 \[Q_{\frac{\kappa}{2}-1}\left(\frac{2m}{n}+1\right)\ll \frac{1}{2^{\frac{\kappa}{2}(\ell+1)}}\frac{1}{\kappa}.\]

	 Next, we sum over dyadic intervals. This gives
	 \begin{flalign*}
	 	\sum_{m\geq1}m^{\varepsilon}\left|Q_{\frac{\kappa}{2}-1}\left(\frac{2m}{n}+1\right)\right|&=\sum_{m\leq 4n}m^{\varepsilon}\left|Q_{\frac{\kappa}{2}-1}\left(\frac{2m}{n}+1\right)\right|+\sum_{\ell\geq2}\quad\sum_{2^{\ell}n< m\leq 2^{\ell}n}m^{\varepsilon}\left|Q_{\frac{\kappa}{2}-1}\left(\frac{2m}{n}+1\right)\right|\\
	 	&\ll_{\varepsilon}\frac{1}{\kappa}\sum_{m\leq 4n}\frac{n}{m^{1-\varepsilon}}+\sum_{\ell\geq2}\quad\sum_{2^{\ell}n< m\leq 2^{\ell+1}n}m^{\varepsilon}\frac{1}{2^{\frac{\kappa}{2}(\ell+1)}}\frac{1}{\kappa^{1/2}}\\
	 	&\ll_{\varepsilon} \frac{n^{1+\varepsilon}}{\kappa^{\frac{1}{2}}}.
	 \end{flalign*}	 
	Upon inserting this result into inequality \eqref{eq. J1 by legerdene}, we conclude
	\[J_{\Reg}^1(\textbf{0},n)\ll_{\varepsilon} \frac{n^{\frac{\kappa}{2}+\varepsilon}B(\kappa/2,\kappa/2)}{|C_{\kappa}|\kappa^{\frac{1}{2}}}.\]
\end{proof}

\begin{remark}\label{rem. n=1 remark for regular}
    When $n=1,$ we claim that
    \[J_{\Reg}(\textbf{0},1)=J_{\Reg}^1(\textbf{0},1)+J_{\Reg}^2(\textbf{0},1)\ll \frac{B(\kappa/2,\kappa/2)}{|C_{\kappa}|2^{\frac{\kappa}{2}}}.\]
    By Proposition \ref{prop, the error term in RTF}, we have $J_{\Reg}^3(\textbf{0},1)=0$. It suffices to study $J_{\Reg}^1(\textbf{0},1).$ Notice that we have the inequality: 
\[2m+1-t\geq 2|1-t|,\]
when $t\in[-1,1]$. Then we have
	 \[\int_{-1}^1\frac{(1-t^2)^{\frac{\kappa}{2}-1}}{\left(2m+1-t\right)^{\frac{\kappa}{2}}}\,dt\ll \int_{-1}^1\frac{(1-t^2)^{\frac{\kappa}{2}-1}}{2^{\frac{\kappa}{2}}\left(1-t\right)^{\frac{\kappa}{2}-1}}\,dt\ll \frac{1}{\kappa}.\]
	 This implies
	 \[Q_{\frac{\kappa}{2}-1}\left(2m+1\right)\ll \frac{1}{2^{\frac{\kappa}{2}}\kappa},\]
  and consequently, 
  	 \begin{flalign*}
	 \sum_{m\geq1}m^{\varepsilon}\left|Q_{\frac{\kappa}{2}-1}\left(2m+1\right)\right|&=\sum_{m\leq 4}m^{\varepsilon}\left|Q_{\frac{\kappa}{2}-1}\left(2m+1\right)\right|+\sum_{\ell\geq2}\quad\sum_{2^{\ell}< m\leq 2^{\ell}}m^{\varepsilon}\left|Q_{\frac{\kappa}{2}-1}\left(2m+1\right)\right|\ll\frac{1}{2^{\frac{\kappa}{2}}}.
	 \end{flalign*}	 
     Insert it into $J_{\Reg}^1(\mathbf{0},1)$ and we prove the claim.
\end{remark}

\bigskip

Finally, we investigate $J_{\Reg}^3(\mathbf{0},n).$
\begin{lemma}\label{lem. 3 term in error}
	Let $J_{\Reg}^3(\textbf{s},n)$ be defined as in Lemma \ref{lem, 3rd error}. Then for any $\varepsilon > 0$,
	\[J_{\Reg}^3(\textbf{0},n)\ll_\varepsilon \frac{n^{\frac{\kappa}{2}+\varepsilon}B(\kappa/2,\kappa/2)}{|C_{\kappa}|\kappa^{\frac{1}{2}}}.\]
\end{lemma}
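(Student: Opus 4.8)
The plan is to mirror the argument of Lemma \ref{lem. 1 2 term in error}, but track the dependence on the summation range more carefully, since here the summation variable $m$ runs only over $1 \le m \le n-1$ and the hypergeometric argument is $n/m \ge 1$ rather than $n/(m+n) < 1$. First I would start from the explicit formula for $J_{\Reg}^3(\textbf{0},n)$ in Lemma \ref{lem, 3rd error}, use $|t_{\gamma}|=1$ for all $\gamma \in G_4(n)$, and bound the divisor-type sum $\sum_{ad=n-m}\sum_{bc=m,4\mid c,\gcd=1}1 \ll_{\varepsilon} n^{\varepsilon}$, to reduce to
\[
J_{\Reg}^3(\textbf{0},n) \ll_{\varepsilon} \frac{n^{\kappa-1+\varepsilon}B(\kappa/2,\kappa/2)^2}{|C_{\kappa}|}\sum_{1\le m\le n-1}\frac{1}{m^{\kappa/2}}\left|F\!\left(\tfrac{\kappa}{2},\tfrac{\kappa}{2};\kappa;\tfrac{n}{m}\right)\right|.
\]
The subtlety is that $n/m > 1$, so I must use the analytic continuation of the hypergeometric function across the branch cut $[1,\infty)$; writing $n/m = 1/w$ with $0 < w = m/n < 1$ and applying the Pfaff/Euler transformation $F(\tfrac{\kappa}{2},\tfrac{\kappa}{2};\kappa;1/w) = w^{\kappa/2}F(\tfrac{\kappa}{2},\tfrac{\kappa}{2};\kappa;1-w)$ up to an explicit constant (and a possibly nonzero imaginary contribution controlled in the same way), the factor $m^{-\kappa/2}$ is exactly cancelled and I land on the same Legendre function $Q_{\kappa/2-1}$ as in \eqref{eq. hypergeometric function and legrende function}, now evaluated at $(1+w)/(1-w) = (n+m)/(n-m)$.

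Next I would apply the integral representation \cite[Equation 8.712]{GradshteynRyzhik2007} to write
\[
Q_{\frac{\kappa}{2}-1}\!\left(\frac{n+m}{n-m}\right) = \frac{1}{2^{\kappa/2}}\int_{-1}^{1}\frac{(1-t^2)^{\frac{\kappa}{2}-1}}{\left(\frac{n+m}{n-m}-t\right)^{\kappa/2}}\,dt,
\]
and estimate the integral according to the size of $m$ relative to $n$. For $m \le n/2$ the quantity $\frac{n+m}{n-m}-t$ is bounded below by a positive constant uniformly in $t \in [-1,1]$, so pulling out $(1-t)^{\kappa/2-1}$ and using $\int_{-1}^1 (1-t)^{\kappa/2-1}(1+t)^{\kappa/2-1}\,dt = B(\kappa/2,\kappa/2) \ll 2^{-\kappa}\kappa^{-1/2}$ (via Stirling) gives exponential decay in $\kappa$ plus a $\kappa^{-1/2}$ saving. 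For $m$ close to $n$, say $n - m = \delta$ with $1 \le \delta \le n/2$, the point $\frac{n+m}{n-m} = \frac{2n-\delta}{\delta}$ is large, roughly $2n/\delta$, so $\left(\frac{n+m}{n-m}-t\right)^{-\kappa/2} \ll (\delta/n)^{\kappa/2}$ and the remaining $\int_{-1}^1 (1-t^2)^{\kappa/2-1}\,dt \ll \kappa^{-1/2}$ after the substitution $t \mapsto \sqrt{t}$ as in the previous lemma; this yields $Q_{\kappa/2-1}((n+m)/(n-m)) \ll 2^{-\kappa/2}(\delta/n)^{\kappa/2}\kappa^{-1/2}$, which summed over $\delta$ converges geometrically.

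Finally I would reassemble: substituting back, the factor $B(\kappa/2,\kappa/2)^2$ from the prefactor combines with the $2^{-\kappa/2}$ from $Q$ and the sum over $m$ (which contributes at most $n^{1+\varepsilon}$ after the dyadic/geometric estimates, with the dominant contribution from $m$ near $n$ giving the stated $n^{\kappa/2}$ growth when combined with $n^{\kappa-1}$), producing
\[
J_{\Reg}^3(\textbf{0},n) \ll_{\varepsilon} \frac{n^{\frac{\kappa}{2}+\varepsilon}B(\kappa/2,\kappa/2)}{|C_{\kappa}|\kappa^{1/2}},
\]
as claimed. The main obstacle I anticipate is handling the analytic continuation of $F(\kappa/2,\kappa/2;\kappa;\cdot)$ through the branch point at $1$: I need to be careful that the transformation formula used to push the argument back into $(0,1)$ is the one consistent with the branch of the complex power functions fixed in \S 2.1 (and with Lemma \ref{lemma. complex power st}), and that any extra term arising from the logarithmic singularity of $Q_{\kappa/2-1}$ near its branch point — which is where $m \to n^-$, i.e. $\delta \to 1$ — does not spoil the bound; fortunately the worst case $\delta = O(1)$ contributes $O(2^{-\kappa/2}\kappa^{-1/2} \cdot n^{-\kappa/2}\cdot n^{\varepsilon})$ per term, comfortably within the claimed error once multiplied by the $n^{\kappa-1}B(\kappa/2,\kappa/2)^2$ prefactor and $B(\kappa/2,\kappa/2) \ll 2^{-\kappa}$.
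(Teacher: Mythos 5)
Your overall plan — reduce to a sum of Legendre functions via the divisor bound, then estimate — is the same as the paper's, but the route you take to get there contains a genuine gap, and a secondary error compounds it.

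The central problem is the transformation $F(\tfrac{\kappa}{2},\tfrac{\kappa}{2};\kappa;1/w)=w^{\kappa/2}F(\tfrac{\kappa}{2},\tfrac{\kappa}{2};\kappa;1-w)$ ``up to an explicit constant.'' This is not a Pfaff transformation (Pfaff sends $z\mapsto z/(z-1)$, so $1/w\mapsto 1/(1-w)$, still outside $[0,1)$), it is not Euler's transformation (which is trivial here because $c=a+b$), and I do not recognize it as any valid connection formula for ${}_2F_1$. The paper avoids this entirely: it applies \eqref{eq. hypergeometric function and legrende function} exactly as written but with $z=1-n/m<0$, which sends $F(\tfrac{\kappa}{2},\tfrac{\kappa}{2};\kappa;n/m)$ to $\tfrac{2(m/n)^{\kappa/2}}{B(\kappa/2,\kappa/2)}Q_{\kappa/2-1}\!\left(\tfrac{2m}{n}-1\right)$. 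Note the argument $\tfrac{2m}{n}-1$ lies \emph{on the cut} $(-1,1)$, whereas your proposed rewrite lands on $Q_{\kappa/2-1}\!\left(\tfrac{n+m}{n-m}\right)$ with argument $>1$, i.e., \emph{off the cut}. These are genuinely different objects with drastically different large-$\nu$ behavior: off the cut $Q_\nu(x)$ decays exponentially in $\nu$ (as you find), whereas on the cut it merely oscillates with envelope of size $\nu^{-1/2}$ (the bound \cite[8.724(2)]{GradshteynRyzhik2007} that the paper uses). The fact that your final count picks up extra factors of $B(\kappa/2,\kappa/2)\ll 2^{-\kappa}$ and $2^{-\kappa/2}$ beyond the claimed bound — making it exponentially stronger than what the lemma asserts — should itself be a warning sign that the transformation pushed the argument onto the wrong branch.

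A smaller but real issue: the claim that for $m\le n/2$ the quantity $\tfrac{n+m}{n-m}-t$ is ``bounded below by a positive constant uniformly in $t\in[-1,1]$'' is false. The infimum over $t$ is $\tfrac{n+m}{n-m}-1=\tfrac{2m}{n-m}$, which tends to $0$ as $m\to 1$ with $n$ large, so the integral representation \cite[Equation 8.712]{GradshteynRyzhik2007} gives no uniform saving in that range. (This is precisely the regime $x\to1^+$ in which $Q_\nu(x)$ transitions from exponential decay to logarithmic blow-up, so the small-$m$ tail cannot be dismissed the way you do.)

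To match the paper: after the divisor bound, apply \eqref{eq. hypergeometric function and legrende function} with $z=1-n/m<0$; the factor $(m/n)^{\kappa/2}$ cancels the $m^{-\kappa/2}$ in the sum exactly, leaving $\sum_{1\le m\le n-1}m^\varepsilon\,|Q_{\kappa/2-1}(2m/n-1)|$. Then use the on-the-cut asymptotic
\[
Q_{\kappa/2-1}\!\left(\tfrac{2m}{n}-1\right)\ll \frac{1}{\sqrt{\kappa}}\,\frac{1}{(m/n)^{1/4}(1-m/n)^{1/4}},
\]
split the sum at $m=n/2$, and observe the integrable $1/4$-power blow-up at both endpoints sums to $n^{1+\varepsilon}\kappa^{-1/2}$. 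This recovers exactly the stated bound, with the genuine saving being $\kappa^{-1/2}$ rather than exponential.
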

\begin{proof}
First, we notice again that $|t_{\gamma}|=1$ for $\gamma\in G_4(n),$ so that 
  \[J_{\Reg}^3(\textbf{0},n)\ll_{\varepsilon} \frac{2n^{\kappa-1}B(\kappa/2,\kappa/2)^2}{i^{2\kappa}C_{\kappa}}\sum_{1\leq m\leq n-1}\frac{m^{\varepsilon}}{m^{\kappa/2}}|F(\kappa/2,\kappa/2;\kappa;n/m)|.\]
  As before, we use \eqref{eq. hypergeometric function and legrende function} and obtain
\begin{equation}\label{eq. J3 by legerdene}
J_{\Reg}^3(\textbf{0},n)\ll_{\varepsilon} \frac{n^{\frac{\kappa}{2}-1+\varepsilon}B(\kappa/2,\kappa/2)}{|C_{\kappa}|}\sum_{1\leq m\leq n-1}m^{\varepsilon}\left|Q_{\frac{\kappa}{2}-1}\left(\frac{2m}{n}-1\right)\right|.
\end{equation} 
Since $1\leq m\leq n-1$, we have $\frac{2m}{n}-1\in (-1,1).$ Applying the bound \cite[Equation 8.724(2)]{GradshteynRyzhik2007} to the Legendre function yields
\[Q_{\frac{\kappa}{2}-1}\left(\frac{2m}{n}-1\right)\ll \frac{1}{\sqrt{\kappa}}\frac{1}{(m/n)^{1/4}(1-m/n)^{1/4}}.\]
Then we obtain
\begin{flalign*}
	\sum_{1\leq m\leq n-1}m^{\varepsilon}\left|Q_{\frac{\kappa}{2}-1}\left(\frac{2m}{n}-1\right)\right|&=\sum_{1\leq m\leq n/2}m^{\varepsilon}\left|Q_{\frac{\kappa}{2}-1}\left(\frac{2m}{n}-1\right)\right|+\sum_{n/2<m\leq n-1}m^{\varepsilon}\left|Q_{\frac{\kappa}{2}-1}\left(\frac{2m}{n}-1\right)\right|\\
	&\ll_{\varepsilon} \frac{n^{\varepsilon}}{\sqrt{\kappa}}\sum_{1\leq m\leq n/2}\left(\frac{n}{m}\right)^{\frac{1}{4}}+\frac{n^{\varepsilon}}{\sqrt{\kappa}}\sum_{n/2< m\leq n}\left(\frac{n}{n-m}\right)^{\frac{1}{4}}\ll_{\varepsilon}\frac{n^{1+\varepsilon}}{\sqrt{\kappa}}.
\end{flalign*}
 Upon inserting this bound into \eqref{eq. J3 by legerdene} we conclude
 \[J_{\Reg}^3(\textbf{0},n)\ll_{\varepsilon} \frac{n^{\frac{\kappa}{2}+\varepsilon}B(\kappa/2,\kappa/2)}{|C_{\kappa}|\kappa^{\frac{1}{2}}}.\]
\end{proof}

\section{Proof of Theorems}\label{sec. proof}

\subsection{Proof of Theorem \ref{thm. second moment closed formula}}\label{subsec. proof A}
In this section, we prove Theorem \ref{thm. second moment closed formula}.
\begin{proof}
	Applying  \eqref{eq. full RTF} with $\mathbf{s}=\mathbf{0}$ we obtain
	\[n^{\frac{\kappa-1}{2}}\frac{\Gamma(\kappa/2)^2}{(2\pi)^{\kappa}}\sum_{f\in H_{\kappa}(4)}\frac{\Lambda_f(n)L(1/2,f)^2}{\|f\|^2}=J_{\si}(\textbf{0},n)+J_{\Reg}(\textbf{0},n).\]
The singular orbit integral is explicitly given in Proposition \ref{prop. main term for the general case}. $J_{\Reg}(\textbf{0},n)$ is explicitly given in Proposition \ref{prop, the error term in RTF}, and the bound for $J_{\Reg}(\textbf{0},n)$ is given by Proposition \ref{prop. est of error general}. Combining these results completes the proof.
\end{proof}

When $n=1,$ we can prove a stronger error term: 
\begin{prop}\label{prop. second moment n=1}
	When $n=1,$ 
	\[\frac{\Gamma(\kappa/2)^2}{(2\pi)^{\kappa}}\sum_{f\in H_{\kappa}(4)}\frac{L(1/2,f)^2}{\|f\|^2}=\frac{4\Gamma\left(\frac{\kappa}{2}\right)^2}{i^{\kappa}C_{\kappa}\Gamma(\kappa)}\left(\log\kappa+O\left(1+\frac{1}{\kappa}\right)\right)\]
\end{prop}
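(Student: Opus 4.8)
The plan is to specialize the general formula of Theorem~\ref{thm. second moment closed formula} (equivalently, \eqref{eq. full RTF}) to the case $n=1$ and simply read off the sharper error term that is already available in that case. Concretely, I would start from
\[
n^{\frac{\kappa-1}{2}}\frac{\Gamma(\kappa/2)^2}{(2\pi)^{\kappa}}\sum_{f\in H_{\kappa}(4)}\frac{\Lambda_f(1)L(1/2,f)^2}{\|f\|^2}=J_{\si}(\textbf{0},1)+J_{\Reg}(\textbf{0},1),
\]
and observe that for $n=1$ we have $\Lambda_f(1)=1$ and the prefactor $n^{\frac{\kappa-1}{2}}=1$, so the left-hand side is exactly $\frac{\Gamma(\kappa/2)^2}{(2\pi)^{\kappa}}\sum_{f}\frac{L(1/2,f)^2}{\|f\|^2}$.

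Next I would invoke Remark~\ref{rem. singular orbital integral when n=1}, which already computes $J_{\si}(\textbf{0},1)$ exactly: the entire pieces $J_{\sm}^2(\textbf{0},1)$ and $J_{\du}^2(\textbf{0},1)$ vanish (their defining sums run over $g$ not a perfect square, but $n=1$ forces $a=d=g=1$), so $J_{\si}(\textbf{0},1)$ reduces to the double contour integral \eqref{eq, closed formula for singular orbital integral}. Applying the Laurent expansion $\zeta(1+s)=\frac{1}{s}+O(1)$ near $s=0$ together with the residue theorem — exactly the computation recorded in Remark~\ref{rem. singular orbital integral when n=1} — gives
\[
J_{\si}(\textbf{0},1)=\frac{4\Gamma\left(\frac{\kappa}{2}\right)^2}{i^{\kappa}C_{\kappa}\Gamma(\kappa)}\left(\frac{\Gamma'(\kappa/2)}{\Gamma(\kappa/2)}+O(1)\right)=\frac{4\Gamma\left(\frac{\kappa}{2}\right)^2}{i^{\kappa}C_{\kappa}\Gamma(\kappa)}\left(\log\kappa+O\left(1+\frac{1}{\kappa}\right)\right),
\]
where the last equality uses the standard estimate $\psi(\kappa/2)=\log(\kappa/2)+O(1/\kappa)=\log\kappa+O(1)$ and, more precisely, $\frac{\Gamma'(\kappa/2)}{\Gamma(\kappa/2)}=\log\kappa+O(1+1/\kappa)$.

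For the regular term, I would appeal to Remark~\ref{rem. n=1 remark for regular}, which shows $J_{\Reg}^3(\textbf{0},1)=0$ (the sum over $1\le m\le n-1$ is empty) and bounds the remaining pieces: using the integral representation of $Q_{\kappa/2-1}$ and the elementary inequality $2m+1-t\ge 2|1-t|$ on $[-1,1]$ one gets $Q_{\kappa/2-1}(2m+1)\ll 2^{-\kappa/2}\kappa^{-1}$, whence $\sum_{m\ge1}m^{\varepsilon}|Q_{\kappa/2-1}(2m+1)|\ll 2^{-\kappa/2}$ and therefore
\[
J_{\Reg}(\textbf{0},1)\ll \frac{B(\kappa/2,\kappa/2)}{|C_{\kappa}|\,2^{\kappa/2}}.
\]
Finally I would compare sizes: since $B(\kappa/2,\kappa/2)=\Gamma(\kappa/2)^2/\Gamma(\kappa)$ and the main term is $\asymp \frac{\Gamma(\kappa/2)^2}{|C_{\kappa}|\Gamma(\kappa)}\log\kappa$, the ratio of the error to the main term is $O(2^{-\kappa/2}/\log\kappa)=o(1/\kappa)$, so $J_{\Reg}(\textbf{0},1)$ is absorbed into the $O(1+1/\kappa)$ already present in the main term. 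Assembling these three inputs yields
\[
\frac{\Gamma(\kappa/2)^2}{(2\pi)^{\kappa}}\sum_{f\in H_{\kappa}(4)}\frac{L(1/2,f)^2}{\|f\|^2}=\frac{4\Gamma\left(\frac{\kappa}{2}\right)^2}{i^{\kappa}C_{\kappa}\Gamma(\kappa)}\left(\log\kappa+O\left(1+\frac{1}{\kappa}\right)\right),
\]
as claimed. There is no real obstacle here beyond bookkeeping: the only point requiring a little care is confirming the exact shape of the residue computation in Remark~\ref{rem. singular orbital integral when n=1} (tracking the factor $(z_1^2-z_2^2)^2/(z_1z_2)^3$ and extracting the coefficient that produces the single $\Gamma'(\kappa/2)/\Gamma(\kappa/2)$ rather than a higher derivative or an extra constant), and then checking that the exponentially small $J_{\Reg}(\textbf{0},1)$ is genuinely negligible against $\log\kappa$ — both of which are already done in the cited remarks.
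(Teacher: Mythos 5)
Your proposal is correct and follows exactly the same route as the paper: specialize \eqref{eq. full RTF} to $\mathbf{s}=\mathbf{0}$, $n=1$, then feed in Remark~\ref{rem. singular orbital integral when n=1} for $J_{\si}(\textbf{0},1)$ and Remark~\ref{rem. n=1 remark for regular} for $J_{\Reg}(\textbf{0},1)$, and observe the regular term is exponentially small. The paper's own proof is a one-line pointer to those two remarks; your write-up just fills in the bookkeeping.
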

\begin{proof}
	The proof is identical to that of Theorem \ref{thm. second moment closed formula} except it uses Remark \ref{rem. singular orbital integral when n=1} and Remark \ref{rem. n=1 remark for regular} instead.
\end{proof}

\subsection{Proof of Theorem \ref{thm. subconvexity theorem}}\label{subsec. proof C}
In this section, we prove Theorem \ref{thm. subconvexity theorem}. We first require the following lemma:

\begin{lemma}\label{lem. petersson norm lemma}
	Let $f\in H_{\kappa}^+(4)$ be a Hecke eigenform in the Kohnen plus space. Assume that $f$ satisfies the normalization \eqref{eq. normalization of forms}. Then
	\[\frac{1}{\|f\|^2}=\frac{(4\pi)^{\kappa}}{2\pi\Gamma(\kappa)L(1,\sym^2F)},\]
	where $F:=F_f\in\cS_{2\kappa-1}(\SL_2(\bZ))$ is the Shimura correspondent of $f$ and $L(s,\sym^2F)$ is the symmetric square $L$-function of $F.$
\end{lemma}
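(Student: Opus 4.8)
The plan is to connect the Petersson norm $\|f\|^{2}$ of the half-integral weight form to an $L$-value of its Shimura correspondent by combining two classical ingredients: a Waldspurger/Kohnen–Zagier-type identity relating the Fourier coefficients of $f$ to twisted central values of the Shimura lift, and the Rankin–Selberg unfolding that expresses $\|F\|^{2}$ in terms of $L(1,\sym^{2}F)$. First I would recall the Rankin–Selberg computation for $F\in\cS_{2\kappa-1}(\SL_{2}(\bZ))$ normalized so that its Hecke eigenvalues are $\lambda_{F}(n)$ (i.e. $F$ has leading Fourier coefficient giving $a_{F}(n)=\lambda_{F}(n)n^{\kappa-1}$ in the unnormalized convention, or $\lambda_{F}(n)$ in the analytic normalization): there one has the standard identity
\[
\langle F,F\rangle = \frac{\Gamma(2\kappa-1)}{(4\pi)^{2\kappa-1}}\,\frac{\pi}{6}\cdot\frac{L(1,\sym^{2}F)}{\zeta(2)}\cdot(\text{explicit constant}),
\]
coming from $\langle F\cdot E_{s}, F\rangle$ having residue proportional to $L(1,\sym^{2}F)$ at $s=1$. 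I would track the normalization conventions carefully here, since the statement of the lemma has a clean shape $(4\pi)^{\kappa}/(2\pi\Gamma(\kappa)L(1,\sym^{2}F))$ and all the powers of $2$, $\pi$, and the values $\zeta(2)$ must cancel against the analogous constants on the half-integral weight side.

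The second step is to relate $\|f\|^{2}$ to $\|F\|^{2}$. The Kohnen–Zagier formula \cite[Theorem 1]{KohnenZagier1981}, which underlies the normalization \eqref{eq. normalization of forms}, gives not only the proportionality $a_{f}(|D|)^{2}\sim L(1/2,F\times\chi_{D})$ but also an exact formula for the ratio $\|f\|^{2}/\|F\|^{2}$ (Kohnen's work on the plus space, and Kohnen–Zagier, express $\langle f,f\rangle$ in terms of $\langle F,F\rangle$ up to an elementary factor depending only on $\kappa$). Concretely, in the plus-space setting on $\Gamma_{0}(4)$ there is a constant $c_{\kappa}$ — essentially a ratio of Gamma factors and powers of $2$ and $\pi$ — with $\langle f,f\rangle = c_{\kappa}\langle F,F\rangle$, and the normalization \eqref{eq. normalization of forms} pins down the overall scaling of $f$ so that this constant is explicit. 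Plugging the Rankin–Selberg value of $\langle F,F\rangle$ from the first step into this relation, and simplifying the product of archimedean factors using the duplication formula $\Gamma(2\kappa-1)=\pi^{-1/2}2^{2\kappa-2}\Gamma(\kappa-1/2)\Gamma(\kappa)$ (and its siblings), should collapse everything to the asserted $\frac{(4\pi)^{\kappa}}{2\pi\Gamma(\kappa)L(1,\sym^{2}F)}$.

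The main obstacle I anticipate is bookkeeping of normalizations rather than any conceptual difficulty: the half-integral weight forms here are normalized via \eqref{eq. normalization of forms} using $a_{f}(|D|)^{2}=\pi^{2}L(1/2,F\times\chi_{D})$, the Shimura correspondent $F$ lives a priori in $\cS_{2\kappa-1}(\Gamma_{0}(2))$ but is identified with a level-one form, and one must reconcile the level, the plus-space projection constant, the $\varepsilon_{d}$ theta-multiplier conventions, and the powers of $2$ introduced by $\Gamma_{0}(4)$ versus $\SL_{2}(\bZ)$ in the Petersson inner product (the index $[\SL_{2}(\bZ):\Gamma_{0}(4)]=6$ appears). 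My plan is to fix one clean chain — Kohnen–Zagier for the coefficient identity, Kohnen's plus-space theory for $\langle f,f\rangle$ in terms of $\langle F,F\rangle$, Rankin–Selberg for $\langle F,F\rangle$ in terms of $L(1,\sym^{2}F)$ — and verify that the elementary constants multiply to exactly the stated value, possibly checking against a known small-weight example as a sanity check. Once the constant is confirmed, the lemma follows immediately.
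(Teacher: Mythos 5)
Your outline has the right ingredients and lands on the same basic identity the paper uses, but the paper's proof is considerably shorter and your description of the middle step is misattributed. The paper simply quotes the explicit Kohnen--Zagier formula in the already-Rankin--Selberg'd form
\[
\frac{a_f(|D|)^2}{\|f\|^2}=\frac{(4\pi)^{\kappa-1}}{\Gamma(\kappa)}\,\frac{2\pi^2 L\bigl(\tfrac{1}{2},F\times\chi_D\bigr)}{L(1,\sym^2 F)},
\]
substitutes the normalization $a_f(|D|)^2=\pi^2 L(1/2,F\times\chi_D)$ from \eqref{eq. normalization of forms}, cancels $L(1/2,F\times\chi_D)$, and is done. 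Two points of difference worth flagging in your plan. First, you invoke ``Kohnen's plus-space theory'' as an independent source of the relation $\langle f,f\rangle=c_\kappa\langle F,F\rangle$; no such identity exists intrinsically, since the scaling of $f$ is arbitrary. That relation only holds \emph{after} imposing \eqref{eq. normalization of forms}, and is then a corollary of Kohnen--Zagier, not of the plus-space construction itself --- so this step is circular with your first, not a separate input. Second, and more substantively, you omit the justification that there exists a fundamental discriminant $D$ with $a_f(|D|)\neq 0$, which is needed to divide through by $L(1/2,F\times\chi_D)$. The paper gets this from Luo--Ramakrishnan \cite{LuoRamakrishnan1997}; without it the cancellation step (and even the well-definedness of \eqref{eq. normalization of forms}) is unjustified. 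With those two adjustments your route collapses to the paper's.
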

\begin{proof}
	By \cite[Theorem B]{LuoRamakrishnan1997}, there exists a fundamental discriminant $D$ satisfying $(-1)^{\kappa-\frac{1}{2}}D>0$ and $a_f(|D|)\neq0.$ We then apply the explicit Kohnen-Zagier formula \cite[Theorem 1]{KohnenZagier1981} to obtain
	\[\frac{a_f(|D|)^2}{\|f\|^2}=\frac{(4\pi)^{\kappa-1}}{\Gamma(\kappa)}\frac{2\pi^2L\left(\frac{1}{2},F\times\chi_D\right)}{L(1,\sym^2F)}.\]
	Inserting \eqref{eq. normalization of forms} into this identity completes the proof.
\end{proof}

Let $m$ be an odd integer and $n=m^2.$ By Theorem \ref{thm. second moment closed formula}, we have
\[(m^2)^{\frac{\kappa-1}{2}}\frac{\Gamma(\kappa/2)^2}{(2\pi)^{\kappa}}\sum_{f\in H_{\kappa}(4)}\frac{\Lambda_f(m^2)|L(1/2,f)|^2}{\|f\|^2}=J_{\si}(\textbf{0},m^2)+O_{\varepsilon}\left(\frac{(m^2)^{\frac{\kappa}{2}+\varepsilon}B(\kappa/2,\kappa/2)}{|C_{\kappa}|\kappa^{\frac{1}{2}}}\right).\]
Applying Corollary \ref{cor. estimation of the singular orbital integral} and Proposition \ref{prop, the error term in RTF} we obtain
\[(m^2)^{\frac{\kappa-1}{2}}\frac{\Gamma(\kappa/2)^2}{(2\pi)^{\kappa}}\sum_{f\in H_{\kappa}(4)}\frac{\Lambda_f(m^2)|L(1/2,f)|^2}{\|f\|^2}\ll_{\varepsilon} (m^2)^{\frac{\kappa}{2}-\frac{3}{4}+\varepsilon}\frac{\kappa^{\varepsilon}B(\kappa/2,\kappa/2)}{|C_{\kappa}|}+\frac{(m^2)^{\frac{\kappa}{2}+\varepsilon}B(\kappa/2,\kappa/2)}{|C_{\kappa}|\kappa^{\frac{1}{2}}}.\] 
Inserting the definition of $C_{\kappa}$ together with \eqref{eq. explict hecke eigenvalues} and \eqref{eq. kappa constant} yields
\begin{equation}\label{eq. explicit bound in subconvexity}
\sum_{f\in H_{\kappa}(4)}\frac{\lambda_F(m)|L(1/2,f)|^2}{\|f\|^2}\ll_{\varepsilon}\frac{(4\pi)^{\kappa}}{\Gamma(\kappa)}\left(\frac{\kappa^{1+\varepsilon}}{m^{\frac{1}{2}}}+m^{1+\varepsilon}\kappa^{\frac{1}{2}+\varepsilon}\right).	
\end{equation}

We can now prove Theorem \ref{thm. subconvexity theorem}:
\begin{proof}
Let $f_0\in H_{\kappa}^+(4),$ a Hecke eigenform in the Kohnen plus space and let $F_0:=F_{f_0}\in \cS_{2\kappa-1}(\SL_2(\bZ))$ be the Shimura correspondent of $f_0.$ Also let $p$ be an odd prime. The Hecke relation $\lambda_{F_0}(p)^2=\lambda_{F_0}(p^2)+1$ implies
\[|\lambda_{F_0}(p^{\alpha_{p}})|\geq\frac{1}{4},\]
for some $\alpha_{p}\in\{1,2\}.$ If both $|\lambda_{F_0}(p)|,|\lambda_{F_0}(p^2)|\geq\frac{1}{4},$ we set $\alpha_p=1.$ Let $X \gg 1$ and set 
\[\mathcal{P}=\left\{p^{\alpha_{p}}:p\in[X,2X],\alpha_p\in\{1,2\}\hspace{2mm}\mbox{$p$ is a prime},\hspace{2mm}|\lambda_{F_0}(p^{\alpha_{p}})|\geq\frac{1}{4}\right\}.\]
This implies that if $\ell=p^{\alpha_p}\in\mathcal{P}$ then $\ell\leq 4X^2$ and $|\lambda_{F_0}(\ell)|\geq\frac{1}{4}.$ Let $\{c_{\ell}\}_{\ell\in \cP}$ be a complex sequence satisfying $|c_{\ell}|=1$ for $\ell\in\cP.$ Consider the sum
\[\cJ=\sum_{f\in H_{\kappa}(4)}\frac{|L(1/2,f)|^2}{\|f\|^2}\left|\sum_{\ell\in\cP}c_{\ell}\lambda_F(\ell)\right|^2,\]
where we take $c_{\ell}=\frac{\lambda_{F_0}(\ell)}{|\lambda_{F_0}(\ell)|}$ which is the sign of $\lambda_{F_0}(\ell).$ The positivity of summands in $\cJ$ and that $|\lambda_{F_0}(\ell)|\geq\frac{1}{4}$ together yield the lower bound
\[\left(\frac{X}{\log X}\right)^2\frac{L(1/2,f_0)^2}{\|f_0\|^2}\ll \cJ.\]
On the other hand, open the square in $\cJ$ and interchange the order of summation to obtain
\[\cJ=\sum_{\ell\in\cP}|c_{\ell}|^2\sum_{f\in H_{\kappa}(4)}\frac{\lambda_F(\ell)^2|L(1/2,f)|^2}{\|f\|^2}+\sum_{\ell_1\neq\ell_2\in\cP}c_{\ell_1}\overline{c_{\ell_2}}\sum_{f\in H_{\kappa}(4)}\frac{\lambda_F(\ell_1\ell_2)|L(1/2,f)|^2}{\|f\|^2}.\]
Applying the aforementioned Hecke relation and \eqref{eq. explicit bound in subconvexity} yields the upper bound

\[\cJ\ll_{\varepsilon}\frac{(4\pi)^{\kappa}}{\Gamma(\kappa)}\left(X\left(\frac{\kappa^{1+\varepsilon}}{X^{\frac{1}{2}}}+X^2\kappa^{\frac{1}{2}+\varepsilon}\right)+X^2\left(\frac{\kappa^{1+\varepsilon}}{X}+X^4\kappa^{\frac{1}{2}+\varepsilon}\right)\right).\]
Combining the lower and upper bound for $\cJ$ gives
\[\frac{L(1/2,f_0)^2}{\|f_0\|^2}\ll_{\varepsilon}\frac{(4\pi)^{\kappa}}{\Gamma(\kappa)}\left( \frac{\kappa^{1+\varepsilon}}{X}+X^4\kappa^{\frac{1}{2}+\varepsilon}\right).\]
Now apply Lemma \ref{lem. petersson norm lemma} and set $X=\kappa^{\frac{1}{10}}$ to obtain
\[L\left(\frac{1}{2},f_0\right)\ll_{\varepsilon} \kappa^{\frac{9}{20}+\varepsilon}=(\kappa^{2})^{\frac{1}{4}-\frac{1}{40}+\varepsilon}.\]
This completes the proof of Theorem \ref{thm. subconvexity theorem}.
\end{proof}

\subsection{Proof of Theorem \ref{thm. nonvanishing theorem}}\label{subsec. proof B} Before the proof of Theorem \ref{thm. nonvanishing theorem}, we prove a result on the first moment of central values. This result is extensively studied in \cite{RamakrishnanShankhadhar2014,KohnenRaji2017} and we only sketch the proof.
\begin{prop}\label{prop. first moment}
	Let $D$ be a fixed fundamental discriminant satisfying $(-1)^{\kappa-\frac{1}{2}}D>0.$ Then
	\[\sum_{f\in H_{\kappa}^+(4)}\frac{a_f(|D|)L\left(\frac{1}{2},f\right)}{L(1,\sym^2F)}\gg \kappa.\]
\end{prop}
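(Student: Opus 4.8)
The plan is to run the relative trace formula / kernel function machinery in the ``first moment'' configuration, i.e. integrate the pre-trace formula against only \emph{one} copy of the diagonal torus rather than two, and to specialize to $n=1$ so that the Hecke operator is trivial and only the identity contributes on the geometric side. Concretely, I would study the single integral
\[
I(s)=\int_0^\infty \sum_{f\in H_\kappa^+(4)}\frac{f(iy)}{\|f\|^2}\,\overline{g_D(iy)}\,y^{s+\kappa/2}\,\frac{dy}{y},
\]
where $g_D$ is a suitable fixed form (a Poincar\'e-type series or the relevant theta-type kernel) that ``picks off'' the $|D|$-th Fourier coefficient, so that the spectral expansion produces $\sum_f a_f(|D|)L(1/2+s,f)/\|f\|^2$ after applying the Mellin representation \eqref{eq. integral representation for L functions}. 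Equivalently, one may use the Kohnen--Zagier formula \eqref{eq. normalization of forms} to rewrite $a_f(|D|)L(\tfrac12,f)$ in terms of $L(\tfrac12,F\times\chi_D)L(\tfrac12,F)$ and appeal to the known first-moment asymptotics for twisted $L$-functions of integral-weight forms; but I prefer to keep everything on the half-integral side and reuse the kernel function of Proposition~\ref{prop. kernel function} together with the unfolding argument already carried out there.

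The key steps, in order: \textbf{(1)} Set up the first-moment kernel: take $n=1$, so $G_4(1)=\Gamma_0(4)$, and unfold exactly as in the proof of Proposition~\ref{prop. kernel function}; the only surviving geometric contribution after unfolding is the ``small cell'' (upper-triangular) piece, since for $n=1$ the dual and regular orbits are either empty or contribute a negligible (exponentially small in $\kappa$) amount by Remark~\ref{rem. singular orbital integral when n=1} and Remark~\ref{rem. n=1 remark for regular}. \textbf{(2)} Project onto the Kohnen plus space: insert the Kohnen projection operator (or equivalently restrict the basis to $H_\kappa^+(4)$ and use that the plus-space is Hecke-stable), so that only the coefficients $a_f(|D|)$ with $(-1)^{\kappa-1/2}D>0$ appear; this introduces a harmless constant factor depending on $D$ only. \textbf{(3)} Evaluate the resulting arithmetic sum: the diagonal term gives a main term of size $\asymp \kappa\cdot \frac{(4\pi)^\kappa}{\Gamma(\kappa)}$ (the same normalization as in Lemma~\ref{lem. petersson norm lemma} and \eqref{eq. explicit bound in subconvexity}), and using Lemma~\ref{lem. petersson norm lemma} to replace $1/\|f\|^2$ by $(4\pi)^\kappa/(2\pi\Gamma(\kappa)L(1,\sym^2 F))$ turns the spectral sum into exactly $\sum_{f}a_f(|D|)L(\tfrac12,f)/L(1,\sym^2F)$ up to the stated constants. \textbf{(4)} Bound the off-diagonal/error contributions by the same Legendre-function estimates used in Section~\ref{sec. regular} (Lemma~\ref{lem. 1 2 term in error}, Remark~\ref{rem. n=1 remark for regular}), which are $O(2^{-\kappa/2})$ relative to the main term and hence absorbed. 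Collecting these gives the lower bound $\gg\kappa$.

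The main obstacle I anticipate is \textbf{step (3)}: ensuring the main term is genuinely of size $\asymp\kappa$ and does not suffer unexpected cancellation or an extra $\log\kappa$. Unlike the second moment (where $n=1$ produces a $\zeta$-pole and hence a $\log\kappa$, cf.\ Remark~\ref{rem. singular orbital integral when n=1} and Proposition~\ref{prop. second moment n=1}), the first moment should produce a \emph{polar} main term from a single Gamma/Beta factor without the logarithmic derivative, giving clean linear growth in $\kappa$; verifying this requires carefully tracking the residue computation and the $D$-dependent constant, and checking positivity of the leading constant so that the ``$\gg$'' is legitimate rather than merely ``$=$ (something that could vanish)''. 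A secondary technical point is justifying the interchange of the spectral sum, the $y$-integral, and the analytic continuation to $s=0$; this is routine given the absolute convergence already established for $J_{\mathrm{Spec}}$ in Section~\ref{sec. pre} (the region $\Re(s)>1/2$) combined with the meromorphic continuation via automorphic $L$-functions, so I would state it and refer to \cite{RamakrishnanShankhadhar2014,KohnenRaji2017} for the details, as the proposition statement already indicates.
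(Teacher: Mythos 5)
The integral $I(s)$ as you set it up does not compute the desired first moment. With $f$ and $g_D$ both evaluated at the same point $iy$, expanding the Fourier series shows that
\[
\int_0^\infty f(iy)\,\overline{g_D(iy)}\,y^{s+\kappa/2}\,\frac{dy}{y}
\]
is a Rankin--Selberg type convolution of $f$ against $g_D$, not the product $a_f(|D|)\,L(\tfrac12+s,f)$. To produce the spectral side $\sum_f a_f(|D|)L(\tfrac12,f)/\|f\|^2$ from the kernel $\sum_f f(z)\overline{f(z')}/\|f\|^2$, you must perform two \emph{different} operations in the two variables: a Mellin transform in $z'=iy_2$ along the positive imaginary axis, producing $L(\tfrac12+s,f)$ via \eqref{eq. integral representation for L functions}, and a Fourier-coefficient extraction in $z=x+iC$ over a horizontal period, producing $a_f(|D|)$. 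That is precisely what the paper's proof does: it quotes the identity
\[
\frac{(2\pi)^{\kappa/2}\Gamma(\kappa/2)\Gamma(\kappa-1)}{2(4\pi)^{\kappa-1}}\sum_{f\in H_\kappa^+(4)}\frac{L(1/2,f)}{\|f\|^2}f(z)=R_{\frac{\kappa}{2},\kappa-1}|\textrm{pr}(z)
\]
from \cite[Equation (18),(21)]{KohnenRaji2017} (a single-variable Mellin transform of the reproducing kernel, projected to the plus space) and then extracts the $|D|$-th Fourier coefficient of both sides via \cite[Lemma 1]{KohnenRaji2017}.

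Relatedly, your assertion that for $n=1$ ``only the small cell survives'' with the remaining pieces $O(2^{-\kappa/2})$, citing Remark~\ref{rem. singular orbital integral when n=1} and Remark~\ref{rem. n=1 remark for regular}, misapplies those estimates: they bound the two-variable second-moment orbital integrals $J_{\si}(\mathbf{0},1)$ and $J_{\Reg}(\mathbf{0},1)$, not the object that arises from a Mellin transform in one variable followed by a Fourier-coefficient extraction in the other. In the actual computation, the $|D|$-th coefficient of $R_{\frac{\kappa}{2},\kappa-1}|\textrm{pr}$ involves a confluent hypergeometric function, and the error is controlled by the bound $\frac{\Gamma(\kappa/2)^2}{\Gamma(\kappa)}\,{}_1F_1\!\left(\frac{\kappa}{2},\kappa;z\right)\ll 2^{-\kappa}$ (cf.\ \cite[Equation 9.211(2)]{GradshteynRyzhik2007}), not by the Legendre-function bounds of Section~\ref{sec. regular}. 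Your heuristic about the size of the main term is correct---a simple pole rather than the double pole of the second moment, giving $\Gamma(\kappa)/\Gamma(\kappa-1)=\kappa-1\gg\kappa$ after Lemma~\ref{lem. petersson norm lemma}, with no spurious $\log\kappa$---but the concrete route you wrote does not reach it. You would need to replace $I(s)$ with the Mellin-plus-Fourier-coefficient extraction, which is exactly the content of \cite{KohnenRaji2017} that the paper leans on.
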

\begin{proof}
 By \cite[Equation (18),(21)]{KohnenRaji2017}, we obtain
	\[\frac{(2\pi)^{\frac{\kappa}{2}}\Gamma\left(\frac{\kappa}{2}\right)\Gamma(\kappa-1)}{2(4\pi)^{\kappa-1}}\sum_{f\in H_{\kappa}^+(4)}\frac{L\left(\frac{1}{2},f\right)}{\|f\|^2}f(z)=R_{\frac{\kappa}{2},\kappa-1}|\textrm{pr}(z),\]
where $|\textrm{pr}$ is the projection operator defined in \cite[Equation (4)]{KohnenRaji2017}. We note that our normalization differs slightly from that used in \cite{KohnenRaji2017}. Taking the $|D|$-th Fourier coefficient on each side and applying \cite[Lemma 1]{KohnenRaji2017} gives
	\begin{equation}\label{eq. rough first moment}
		\frac{(2\pi)^{\frac{\kappa}{2}}\Gamma\left(\frac{\kappa}{2}\right)\Gamma(\kappa-1)}{2(4\pi)^{\kappa-1}}|D|^{\frac{\kappa-1}{2}}\sum_{f\in H_{\kappa}^+(4)}\frac{a_f(|D|)L\left(\frac{1}{2},f\right)}{\|f\|^2}=\int_{iC}^{iC+1}R_{\frac{\kappa}{2},\kappa-1}|\textrm{pr}(z)e(-|D|z)\,dz.
	\end{equation}
	Then by \cite[Lemma 1]{KohnenRaji2017} and the bound
	\[\frac{\Gamma\left(\frac{\kappa}{2}\right)^2}{\Gamma(\kappa)}{}_1F_1\left(\frac{\kappa}{2},\kappa;z\right)\ll \frac{1}{2^{\kappa}},\]
	(see \cite[Equation 9.211(2)]{GradshteynRyzhik2007}), we conclude that
	\[\frac{(2\pi)^{\frac{\kappa}{2}}\Gamma\left(\frac{\kappa}{2}\right)\Gamma(\kappa-1)}{2(4\pi)^{\kappa-1}}\sum_{f\in H_{\kappa}^+(4)}\frac{a_f(|D|)L\left(\frac{1}{2},f\right)}{\|f\|^2}\gg (2\pi)^{\frac{\kappa}{2}}\Gamma\left(\frac{\kappa}{2}\right).\]
	Notice that $D$ is fixed. Then inserting Lemma \ref{lem. petersson norm lemma} we complete the proof.
\end{proof}

We introduce the indicator function $\mathbf{1}_{a\neq b}$, which takes the value $1$ if $a\neq b$ and $0$ if $a=b$. In particular, we will use H\"older's inequality with $\mathbf{1}_{L(1/2,f)a_f(|D|)\neq 0}$. Furthermore, by the Kohnen-Zagier formula, we have
\begin{equation}\label{eq. two indicator fucntions}
  \mathbf{1}_{L(1/2,f)a_f(|D|)\neq 0}=\mathbf{1}_{L(1/2,f)L(1/2,F\times\chi_D)\neq 0}.  
\end{equation}
This will allow us to detect simultaneous non-vanishing of the $L$-values. Finally, we prove Theorem \ref{thm. nonvanishing theorem}:
\begin{proof}
	We apply Proposition \ref{prop. first moment} and H\"older's inequality, and obtain 
	\begin{equation}\label{eq. result after holder}
	\begin{split}
		\kappa\ll\sum_{f\in H_{\kappa}^+(4)}\frac{a_f(|D|)L\left(\frac{1}{2},f\right)}{L(1,\sym^2F)}&\leq \left(\sum_{f\in H_{\kappa}^+(4)}\frac{\mathbf{1}_{L(1/2,f)a_f(|D|)\neq 0}}{L(1,\sym^2F)}\right)^{\frac{1}{4}}\\		
	&\hspace{15mm}\times\left(\sum_{f\in H_{\kappa}^+(4)}\frac{a_f(|D|)^4}{L(1,\sym^2F)}\right)^{\frac{1}{4}}\left(\sum_{f\in H_{\kappa}^+(4)}\frac{L\left(\frac{1}{2},f\right)^2}{L(1,\sym^2F)}\right)^{\frac{1}{2}}.
	\end{split}
	\end{equation}
For the second term, we apply \eqref{eq. normalization of forms} so that it becomes the second moment of $L\left(\frac{1}{2},F\times\chi_D\right)$. Then we have
\[\sum_{f\in H_{\kappa}^+(4)}\frac{a_f(|D|)^4}{L(1,\sym^2F)}\ll \sum_{F\in H_{2\kappa-1}(1)}\frac{L\left(\frac{1}{2},F\times\chi_D\right)^2}{L(1,\sym^2F)}\ll\kappa\log\kappa.\]
Here $H_{2\kappa-1}(1)$ is the set of Hecke eigenforms of (integral) weight $2\kappa-1$ and level $1.$ For $D=1,$ the proof of the second moment can be found in \cite{BalkanovaFrolenkov21}. Since the fundamental discriminant $D$ is fixed, the proof is identical to that of the $D=1$ case. For the last term, we apply Proposition \ref{prop. second moment n=1}, together with Lemma \ref{lem. petersson norm lemma} and \eqref{eq. kappa constant} to obtain
\[\sum_{f\in H_{\kappa}^+(4)}\frac{L\left(\frac{1}{2},f\right)^2}{L(1,\sym^2F)}\ll \frac{\Gamma(\kappa)}{(4\pi)^{\kappa}}\sum_{f\in H_{\kappa}(4)}\frac{\left|L\left(\frac{1}{2},f\right)\right|^2}{\|f\|^2}\ll \kappa\log\kappa.\]	
Inserting these estimates into \eqref{eq. result after holder} gives
\[\sum_{f\in H_{\kappa}^+(4)}\frac{\mathbf{1}_{L(1/2,f)a_f(|D|)\neq 0}}{L(1,\sym^2F)}\gg \frac{\kappa}{(\log \kappa)^3}.\]
Applying H\"older's inequality,  we obtain
\[\frac{\kappa}{(\log \kappa)^3}\ll \left(\sum_{F\in H_{2\kappa-1}(1)}\frac{1}{L(1,\sym^2F)^n}\right)^{\frac{1}{n}}\left(\sum_{f\in H_{\kappa}^+(4)}\mathbf{1}_{L(1/2,f)a_f(|D|)\neq 0}\right)^{1-\frac{1}{n}},\]
for any fixed $n\gg1$. Following the method in \cite[Section 5]{Luo2001} (see also \cite[Proposition 6.1]{LauWu2006}), we can show
\[\sum_{F\in H_{2\kappa-1}(1)}\frac{1}{L(1,\sym^2F)^n}\ll \kappa.\]
Together with \eqref{eq. two indicator fucntions}, this implies
\[\#\{f\in H_{\kappa}^+(4):L(1/2,f)L(1/2,F\times\chi_D)\neq0\}\gg\frac{\kappa}{(\log\kappa)^{\frac{3n}{n-1}}}.\]
Choosing $n$ large enough completes the proof of Theorem \ref{thm. nonvanishing theorem}.
\end{proof}

\bibliographystyle{alpha}	
\bibliography{ASTC.bib}
\end{document}